\font\mfett=cmmib10 at11pt
\def\balpha{\hbox{\mfett\char011}}
\newcounter{thm}
\numberwithin{thm}{section}
\numberwithin{equation}{section}
	\newtheoremstyle{myplain}		
			{}			
			{}			
			{\itshape}				
			{}				
			{\sffamily\bfseries}				
			{.}		
			{ }				
			{\thmname{#1}\thmnumber{ #2}\textnormal{\textsf{\thmnote{ (#3)}}}}			
    \newtheoremstyle{mybreak} 
            {}{}{}{}{\sffamily\bfseries}{.}{\newline}
            {\thmname{#1}\thmnumber{ #2}\textnormal{\textsf{\thmnote{ (#3)}}}}
	\newtheoremstyle{mydef}
			{}{}{}{}{\sffamily\bfseries}{.}{ }
			{\thmname{#1}\thmnumber{ #2}}
	\newtheoremstyle{myrem}
			{}{}{}{}{\sffamily\itshape}{.}{ }
			{\thmname{#1}\thmnumber{ #2}}
\theoremstyle{myplain}
	\newtheorem{theorem}[thm]{Theorem}
	\newtheorem{lemma}[thm]{Lemma}
    \newtheorem{corollary}[thm]{Corollary}
\theoremstyle{mybreak}
\theoremstyle{mydef}
	\newtheorem{remark}[thm]{Remark}
\theoremstyle{mydef}
	\newtheorem{example}[thm]{Example}
\DeclareMathOperator{\diag}{diag}
\DeclareMathOperator{\sign}{sign}
\def\sumprime_#1^#2{
    \setbox0=\hbox{$\scriptstyle{#1}$}
    \setbox1=\hbox{$\scriptstyle{#2}$}
    \setbox2=\hbox{$\displaystyle{\sum}$}
    \setbox4=\hbox{${}^\prime\mathsurround=0pt$}
    \dimen0=.5\wd0 \advance\dimen0 by-.5\wd2
    \ifdim\dimen0>0pt
        \ifdim\dimen0>\wd4 \kern\wd4
        \else\kern\dimen0
        \ifdim\dimen1>\wd4 \kern\wd4
        \else\kern\dimen1
    \fi\fi\fi
\mathop{{\sum}^\prime}_{\kern-\wd4 #1}^{\kern-\wd4 #2}
}
\title{\Large Spline Representation and Redundancies of One-Dimensional ReLU Neural Network Models}
\author{
Gerlind Plonka\footnote{Institute for Numerical and Applied Mathematics, G\"ottingen University, Lotzestr.\ 16-18, 37083 G\"ottingen, Germany, \{plonka,y.riebe,kolomoitsev\}@math.uni-goettingen.de} \footnote{Corresponding author} \quad 
Yannick Riebe$^{*}$ 
\quad Yurii Kolomoitsev$^{*}$}
\date{}
\begin{document}
	\let\oldproofname=\proofname
	\renewcommand{\proofname}{\itshape\sffamily{\oldproofname}}

\maketitle

\begin{abstract}
We analyze the structure of a one-dimensional  deep  ReLU neural network (ReLU DNN) in comparison to the model of continuous piecewise linear (CPL) spline functions with arbitrary knots.  In particular, we give  a recursive  algorithm to  transfer the parameter set determining the ReLU DNN into the parameter set of a CPL spline function. Using this representation, we show that after removing the well-known parameter redundancies of the ReLU DNN, which are caused by the positive scaling property, all remaining parameters are independent.  
Moreover, we show that the ReLU DNN with one, two or three hidden layers can represent  CPL spline functions  with $K$ arbitrarily prescribed  knots (breakpoints), where $K$ is the number of real parameters determining the normalized ReLU DNN (up to the output layer parameters). Our findings are useful to fix a priori conditions on the ReLU DNN to achieve an output with prescribed breakpoints and function values.\\[1ex]

\textbf{Keywords:}  ReLU deep neural network, free knot spline functions, recursive algorithm, parameter redundancies\\
\textbf{AMS classification:}
41A15,  65D05, 68T07.
\end{abstract}

\section {Introduction}

In this paper, we present a detailed analysis of the structure of a ReLU (deep) neural network (ReLU DNN) model with input and output layers of dimension $n_{0}=n_{L}=1$ and with $L-1$ hidden layers of widths $n_{1}, \, n_{2}, \ldots , n_{L-1}$, where $n_{\ell} \in {\mathbb N}$.  
This model is for $t \in {\mathbb R}$  recursively determined by
\begin{align}  \nonumber
{\mathbf F}_{1}(t) &= t {\mathbf A}^{(1)} + {\mathbf b}^{(1)}, \\
\label{DNN}
{\mathbf F}_{\ell}(t) &= {\mathbf A}^{(\ell)} \sigma({\mathbf F}_{\ell-1}(t)) + t {\mathbf c}^{(\ell)} + {\mathbf b}^{(\ell)}, \qquad \ell=2, \ldots , L-1, \\
\nonumber 
{f}_{L}(t) &=  {\mathbf A}^{(L)} \sigma({\mathbf F}_{L-1}(t)) + t c^{(L)} + {b}^{(L)},
\end{align}
where ${\mathbf A}^{(\ell)} \in {\mathbb R}^{n_{\ell} \times {n_{\ell-1}}}$ for  $\ell=1, \ldots, L$, ${\mathbf b}^{{(1)}} \in {\mathbb R}^{n_{1}}$, ${\mathbf c}^{{(\ell)}}, {\mathbf b}^{(\ell)} \in {\mathbb R}^{n_{\ell}}$, $\ell=2, \ldots, L$. 
Throughout this paper, we will use  the rectified linear unit (ReLU) (or linear truncated power) function as an activation function, which is applied to each component, i.e., 
$$\sigma({\mathbf x}):=(\max \{ x_{1},0\}, \, \max \{ x_{2},0\}, \ldots , \max \{ x_{n},0\})^{T} \quad \text{ for} \quad  {\mathbf x} =(x_{k})_{k=1}^{n} \in {\mathbb R}^{n}. $$
This ReLU network is slightly generalized compared to the usually used NN framework, since it admits at any layer to add beside a bias vector ${\mathbf b}^{(\ell)}$ also  a linear term $t {\mathbf c}^{(\ell)}$ for $\ell=2, \ldots , L$. At the first layer, we do not need that term  since ${\mathbf A}^{(1)}$ is already a column vector of length $n_{1}$. Observe that this special channel to copy the input $t$ has been also used in \cite{Daub22}, where it  has been called \textit{source channel}. If ${\mathbf c}^{(\ell)} = {\mathbf 0}$, we obtain the conventional (one-dimensional) ReLU  DNN.
We will denote the DNN function model in (\ref{DNN}) with $L-1$ hidden layers by
$$ {\mathcal Y}_{n_{1}, n_{2}, \ldots , n_{L-1}}, $$
and this model has $2n_{1}+ (n_{1}+2)n_{2} + (n_{2}+2)n_{3}+ \ldots + (n_{L-1}+2)n_{L}$ real parameters.

Within the last years, an overwhelming number of papers have shown how the ReLU DNN model (with $n_{0} \ge 1$)  can be  successfully used in many different applications, as for example in classification \cite{Lecun15}, feature extraction \cite{Chui19,Han22}, image denoising and restoration \cite{Dong19}.
This led also to a further theoretical investigation of this model regarding its expressivity and approximation properties, see e.g.\ \cite{Arora17,  Chen19, Daubechies21, Daub22, DeVore21, Hanin19, Lu17, Lu21, Petersen18, Shen20, Telgarski16, Yarotsky17, Zhou20}, and its connection to multivariate max-affine spline operators, see \cite{Balestriero18,Balestriero21}. These investigations include also other activation functions, see e.g.\ \cite{Boel19, Chui16}.
The universal function approximation property of a shallow neural network, the model with only one hidden layer, is well-known for different activation functions, see e.g. \cite{Chui94, Cybenko89, Hornik89} and a survey by A. Pinkus \cite{Pinkus99}.

For $n_{0}=1$, the DNN function model in (\ref{DNN}) is  closely related to the well-known model of continuous piecewise linear (CPL) spline functions $\Sigma_{N}$ with at most $N$ knots (breakpoints). Here we employ the notation that $s: {\mathbb R} \to {\mathbb R}$ is in $\Sigma_{N}$, if it can be represented in the form
\begin{equation}\label{sp}
s(t) =s_{N}(t) = q(t) + \sum_{k=1}^{N} \alpha_{k} \,  \sigma({t}-x_{k}), \qquad t \in {\mathbb R},
\end{equation}
with ordered knots $- \infty  <x_{1} < x_{2} < \ldots < x_{N} < \infty$, $\alpha_{k} \in {\mathbb R}$, and where $q(t) = q_{1} t + q_{0}$  is a linear polynomial. Obviously, $s \in \Sigma_{N}$ is continuous. 
The CPL  spline function model $\Sigma_{N}$ depends on $2N+ 2$ independent parameters. 

For a one-dimensional  ReLU shallow NN ($L=2$),  the model $\Sigma_{N}$ of free knot splines is actually equivalent with ${\mathcal Y}_{N}$ such that all approximation properties of the (adaptive) CPL spline model can be directly  carried over, see e.g. \cite{deBoor, DeVore}. 
However, while for $L>2$, $f_{L}$ in (\ref{DNN}) can still be shown to be  a CPL spline function, the number of breakpoints  of the ReLU NN can grow exponentially with the number of layers $L$, see e.g. \cite{Montufar14, Serra18}, while the number of parameters  is bounded by ${\mathcal O}(LW^{2})$, where $W= \max \{ n_{\ell}: \ell= 0, \ldots, L\}$ denotes the maximal width of the network. 
This observation indicates already the difference between the ReLU DNN and the CPL spline model for $L>2$. 
On the other hand, it has been shown  in \cite{Daub22} that a  ReLU DNN model as in (\ref{DNN})  with $n_{1}= n_{2} = \ldots = n_{L-1} =W$ depending on ${\mathcal O}(LW^{2})$ parameters  can represent any function $s \in \Sigma_{N}$ if $N \le C LW^{2}$, where $C$ is a constant being independent of $L, W$ and $N$. 

Another problem under theoretical investigation is the problem of over-parametrization and unique function representation by the ReLU DNN. One well-known transform causing parameter redundancy  is the positive scaling property. Since $\sigma({\mathbf D} {\mathbf x}) = {\mathbf D} \sigma({\mathbf x})$  for diagonal matrices ${\mathbf D} = \diag (d_{1}, \ldots , d_{n}) \in {\mathbb R}^{n \times n}$  with positive entries $d_{1}, \ldots , d_{n}$
and ${\mathbf x} \in {\mathbb R}^{n}$, we can shift factors  from one affine linear mapping in (\ref{DNN})  to the next,  i.e.,
$$ {\mathbf F}_{\ell}(t) = {\mathbf A}^{(\ell)} \sigma( {\mathbf F}_{\ell-1}(t)) + t {\mathbf c}^{(\ell)} + {\mathbf b}^{(\ell)} =
{\mathbf A}^{(\ell)} {\mathbf D} \sigma( {\mathbf D}^{-1} {\mathbf F}_{\ell-1}(t)) + t {\mathbf c}^{(\ell)} + {\mathbf b}^{(\ell)} ,
$$
see e.g. \cite{Phuong20, Bona21}. This positive scaling property can  be used for  stabilization  of the DNN, see \cite{Stock19}. 
Phuong and Lampert \cite{Phuong20} have  shown that  under certain conditions  on the structure of the DNN (as non-vanishing parameters, decaying width etc.) there are no  other function-preserving parameter transforms besides positive scaling  and permutation. The problem of identifiability  has also been  studied in \cite{Bona21}, where some assumptions in \cite{Phuong20} have been relaxed. 

The results in \cite{Bona21} and \cite{Phuong20}  also implicitly give a bound  on the number of independent  parameters  in a ReLU DNN. Here, we say that  the set of  parameters determining a model is \textit{independent}, if  the reduced model being obtained by pre-determining  one of the  parameters to be a fixed constant, does not longer represent all functions that can be represented by the original model.

In this paper, we  are interested in a better understanding of the ReLU DNN model (\ref{DNN}) in comparison to the CPL spline model (\ref{sp}). While the shallow ReLU NN is equivalent to the CPL spline model, we will show in detail, how it starts to differ for more hidden layers. 
We will derive a precise relation between the parameter set  determining the ReLU DNN and the parameter set  defining the CPL spline function.
In particular, we will construct ReLU DNN functions with a maximal number of arbitrarily prescribed breakpoints. These observations also imply that, after removing redundancies due to positive scaling, the parameter set determining  the ReLU DNN model (\ref{DNN}) is independent.  Our approach to detect the positive scaling property as the only reason for parameter redundancies  strongly differs from those in \cite{Bona21,Phuong20},  and our model is not covered  by the considerations in \cite{Bona21,Phuong20}.
 We show the independence of the set of parameters by rephrasing the model as a CPL spline function with a large number of independent (active) breakpoints. Here we say that a breakpoint of $f \in \Sigma_{N}$ is \textit{active} if the corresponding coefficient in (\ref{sp}) does not vanish. 

The obtained structure of the model nicely shows how the breakpoints corresponding to the different layers of the ReLU DNN model interlace, such that the first layer breakpoints provide a coarse grid that can be refined or extended by the breakpoints corresponding to the further layers.  
Our observations  on the relation between the  ReLU DNN model and the CPL spline model can for example be used to set a priori conditions to pre-determine special breakpoints or breakpoint sets  as well as function values  at intermediate layers or in the output of the ReLU DNN.

This paper is structured as follows.
In Section \ref{sec:shallow} we will briefly summarize  the ReLU shallow  network  in the case of  a given  source channel. It turns out  that  in this case  we have ${\mathcal Y}_{n}=\Sigma_{n}$. 

In Section \ref{sec:two}, we study the ReLU DNN  with two hidden layers  ($L=3$)  in more detail. In Theorem \ref{theo1} we give a constructive procedure to transfer the parameter set determining $f_{3} \in {\mathcal Y}_{n_{1},n_{2}}$ to the parameter set of its representation in $\Sigma_{N}$ with $N \le N_{3,\max} :=(n_{1}+1)(n_{2}+1)-1$. This procedure is used  in Section \ref{sec:break}  to construct  a function $f_{3} \in {\mathcal Y}_{n_{1},n_{2}}$ with the maximal number of $N_{3,\max}$ arbitrarily prescribed breakpoints, see Theorems \ref{theomax} and \ref{theomax1}. 
Moreover,  we can conclude that the set of parameters of ${\mathcal Y}_{n_{1},n_{2}}$ is  independent after normalization ${\mathbf c}^{(2)} = \sign({\mathbf c}^{(2)})$. Since ${\mathcal Y}_{n_{1},n_{2}}$ 
possesses after this normalization $n_{1}n_{2}+ n_{1}+ 2n_{2}+2$ real parameters (and one sign vector), the coefficients in the spline representation of $f_{3}$ depend therefore essentially on the breakpoints.

In Section \ref{sec:more}, we extend  these results to  ReLU DNN with $L$ layers.  
We will show that each function $f_{L}$ in (\ref{DNN}) admits a representation as a CPL spline function,
 $${\mathcal Y}_{n_{1}, n_{2}, \ldots , n_{L-1}} \subseteq \,  \Sigma_{N_{L,\max}} \qquad \text{with} \qquad  N_{L,\max} := \prod\limits_{\ell=1}^{L-1} (n_{\ell}+1) -1, $$  where the upper bound  $N_{L,\max}$ is sharp, see also \cite{Serra18}. Moreover, we provide a recursive algorithm to transfer the parameter set of a function $f_{L}  \in {\mathcal Y}_{n_{1}, n_{2}, \ldots , n_{L-1}}$   in (\ref{DNN}) into a the parameter set of  $f_{L} \in \Sigma_{N}$, and in particular, we can compute all breakpoints of $f_{L}$, see Algorithm \ref{algo2}.
  In Section \ref{sec3knots} we show  for three hidden layers, how to construct a function $f_{4} \in {\mathcal Y}_{n_{1},n_{2}, n_{3}}$ with $n_{1}n_{2} + n_{2}n_{3} + n_{1} + n_{2}+ n_{3}$ arbitrarily prescribed breakpoints. As before, this result shows that the parameter set determining ${\mathcal Y}_{n_{1},n_{2}, n_{3}}$ is independent after normalization ${\mathbf c}^{(\ell)} = \sign({\mathbf c}^{(\ell)}) \in \{-1,0,1\}^{n_{\ell}}$ for $\ell=2,3$.

\section{ReLU Shallow Network}
\setcounter{equation}{0}
\label{sec:shallow}

In the special case of only one hidden layer of width $n=n_{1}$ with a single input and a single output, the ReLU NN ${\mathcal Y}_{n}$ in (\ref{DNN}) has the form
\begin{equation}\label{lone}
 f_{2}(t) = c^{(2)}t + b^{(2)}  + {\mathbf A}^{(2)} \, \sigma (  t\, {\mathbf A}^{(1)} + {\mathbf b}^{(1)} )  = c^{(2)} t + b^{(2)} + \sum_{j=1}^{n} a_{j}^{(2)} \, \sigma(a_{j}^{(1)} t + b^{(1)}_{j}) 
\end{equation} 
with the parameter set $\Lambda_{1}= \{{\mathbf A}^{(1)}, \, {\mathbf A}^{(2)},  \, c^{(2)}, \, {\mathbf b}^{(1)}, b^{(2)}\} \subset {\mathbb R}^{3n+2}$ with  $c^{(2)}, \, b^{(2)} \in {\mathbb R}$, and ${\mathbf A}^{(1)}=(a_{j}^{(1)})_{j=1}^{n}, \, {\mathbf A}^{(2)}=((a_{j}^{(2)})_{j=1}^{n})^{T}, \,{\mathbf b}^{(1)}=(b^{(1)}_{j})_{j=1}^{n}$. In this setting, the hidden layer has $n$ units.
We briefly recall the expressivity  and  parameters redundancies of this model, see e.g.\ \cite{Daub22}. 

\begin{lemma}\label{lemma1}
The function models $\Sigma_{n}$ and ${\mathcal Y}_{n}$ in $(\ref{sp})$  and $(\ref{lone})$ are equivalent, 
i.e., any function $f_{2}$ in $(\ref{lone})$ can be represented in the form $(\ref{sp})$, and any \textnormal{CPL} spline function $s_{n}$ in $(\ref{sp})$ (with $N=n$) can be represented as a function $f_{2}$ in $(\ref{lone})$.
Moreover, $f_2$ is a \textnormal{CPL} spline with exactly $n$ active breakpoints, if $a_j^{(2)} |a_j^{(1)} |\neq 0$ for $j=1, \ldots , n$, and  if  $-(b_{j}^{(1)}/a_j^{(1)})$ are  for $j=1, \ldots , n$  pairwise distinct.
In particular, $f_{2}$ in $(\ref{lone})$ is a \textnormal{CPL} spline function in $\Sigma_{n}$ with at most $n$ knots, and is completely determined by at most $2n+2$ parameters.
\end{lemma}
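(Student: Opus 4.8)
The plan is to prove the two directions of the equivalence separately and then read off the statements about active breakpoints and the parameter count. For the direction $\mathcal{Y}_n \subseteq \Sigma_n$, I would start from the explicit form \eqref{lone} and rewrite each summand $a_j^{(2)}\,\sigma(a_j^{(1)} t + b_j^{(1)})$ so as to expose a knot. The key elementary identity is $\sigma(a t + b) = |a|\,\sigma\bigl(\sign(a)(t + b/a)\bigr)$ when $a \neq 0$, so that $\sigma(a_j^{(1)} t + b_j^{(1)}) = |a_j^{(1)}|\,\sigma(t - x_j)$ with $x_j := -b_j^{(1)}/a_j^{(1)}$ if $a_j^{(1)} > 0$, and, if $a_j^{(1)} < 0$, one uses $\sigma(-u) = \sigma(u) - u$ to convert $\sigma(a_j^{(1)} t + b_j^{(1)})$ into $|a_j^{(1)}|\,\sigma(t - x_j)$ plus a linear polynomial in $t$. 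If $a_j^{(1)} = 0$ the summand is the constant $a_j^{(2)}\sigma(b_j^{(1)})$, which is absorbed into $q(t)$. Collecting all the linear-polynomial contributions together with $c^{(2)} t + b^{(2)}$ yields a single linear polynomial $q(t) = q_1 t + q_0$, and collecting the $\sigma(t - x_j)$ terms (merging coefficients of coincident knots, and sorting the distinct knots in increasing order) yields exactly the form \eqref{sp} with at most $n$ knots; the new coefficient of $\sigma(t-x_j)$ is $\alpha_j = a_j^{(2)}|a_j^{(1)}|$ before merging.

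For the converse $\Sigma_n \subseteq \mathcal{Y}_n$, given $s_n(t) = q_1 t + q_0 + \sum_{k=1}^n \alpha_k\,\sigma(t - x_k)$, I would simply read off the parameters: set $a_k^{(1)} = 1$, $b_k^{(1)} = -x_k$, $a_k^{(2)} = \alpha_k$ for $k=1,\dots,n$, and $c^{(2)} = q_1$, $b^{(2)} = q_0$. Substituting these into \eqref{lone} reproduces $s_n$ verbatim, so every CPL spline with at most $n$ knots lies in $\mathcal{Y}_n$. Together with the first direction this gives the claimed equivalence $\Sigma_n = \mathcal{Y}_n$, and since every element of $\Sigma_n$ is visibly continuous and determined by the $2n+2$ real numbers $x_1,\dots,x_n,\alpha_1,\dots,\alpha_n,q_1,q_0$, the final ``at most $2n+2$ parameters'' assertion follows.

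For the statement on \emph{exactly} $n$ active breakpoints, I would revisit the reduction from the first paragraph under the hypotheses $a_j^{(2)}|a_j^{(1)}| \neq 0$ for all $j$ and pairwise-distinct $-b_j^{(1)}/a_j^{(1)}$. Under the distinctness hypothesis no merging of knots occurs, so the spline representation obtained has precisely the $n$ knots $x_j = -b_j^{(1)}/a_j^{(1)}$ with coefficients $\alpha_j = a_j^{(2)}|a_j^{(1)}|$, and the first hypothesis guarantees every $\alpha_j \neq 0$; hence all $n$ breakpoints are active. The only genuinely delicate point in this argument is bookkeeping the case $a_j^{(1)} < 0$ correctly — one must verify that the linear-polynomial remainder produced by $\sigma(-u) = \sigma(u) - u$ does not secretly cancel the $\sigma(t - x_j)$ term or shift the knot — but this is a one-line check, not a real obstacle. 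The whole lemma is elementary; the substance of the paper lies in the multi-layer case, and this lemma merely fixes the base case $L=2$ of that analysis.
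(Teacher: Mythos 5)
Your argument is correct and follows essentially the same route as the paper: the same sign case distinction $\sigma(a t+b)=|a|\sigma(t+b/a)$ for $a>0$, $\sigma(b)$ for $a=0$, and $(at+b)+|a|\sigma(t+b/a)$ for $a<0$, with the linear remainders absorbed into $q(t)$, and the same converse choice $a_k^{(1)}=1$, $b_k^{(1)}=-x_k$, $a_k^{(2)}=\alpha_k$. Your added remark on why the hypotheses give exactly $n$ active breakpoints (no knot merging, nonzero coefficients $a_j^{(2)}|a_j^{(1)}|$) is a correct elaboration of what the paper leaves implicit.
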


\begin{proof}
For any $a,b \in {\mathbb R}$ we observe that 
$$ \sigma(at+b) =  \begin{cases}  |a| \, \sigma(t + \frac{b}{a}) & a > 0 ,\\
\sigma(b) & a=0, \\
(at+b) + |a|\sigma(t+\frac{b}{a}) & a <0.
\end{cases}  $$
Therefore $f_{2}$ in (\ref{lone}) can be represented as
\begin{align}\nonumber
f_{2}(t) &=  c^{(2)} t + b^{(2)} +  \sum\limits_{\substack{j=1\\ a_{j}^{(1)} > 0}}^{n} a_{j}^{(2)} |a_{j}^{(1)}| \sigma\Big(t+\frac{b_{j}^{(1)}}{a_{j}^{(1)}} 
\Big) \\
& \qquad + \sum\limits_{\substack{j=1\\ a_{j}^{(1)} < 0}}^{n} a_{j}^{(2)} \Big(|a_{j}^{(1)}| \sigma\Big(t+\frac{b_{j}^{(1)}}{a_{j}^{(1)}} 
\Big) + a_{j}^{(1)} t + b_{j}^{(1)} \Big) 
 +\sum\limits_{\substack{j=1\\ a_{j}^{(1)} = 0}}^{n}  a_{j}^{(2)} \sigma(b_{j}^{(1)})  \\
\label{star}
&= \textstyle q(t) + \sum\limits_{\substack{j=1\\  a_{j}^{(1)} \neq 0}}^{n} a_{j}^{(2)} |a_{j}^{(1)}| \sigma\Big(t+ \frac{b_{j}^{(1)}}{a_{j}^{(1)}} 
\Big)  
\end{align}
with a linear polynomial 
\begin{equation}\label{q}
q(t) = \textstyle (c^{(2)} t + b^{(2)}) + \sum\limits_{\substack{j=1\\ a_{j}^{(1)} < 0}}^{n} a_{j}^{(2)} \Big(  a_{j}^{(1)} t + b_{j}^{(1)}\Big)  +\sum\limits_{\substack{j=1\\ a_{j}^{(1)} = 0}}^{n}  a_{j}^{(2)} \sigma(b_{j}^{(1)}). 
\end{equation}
We  apply a simplification of (\ref{star}), if $a_{j}^{(1)} = 0$, $a_{j}^{(2)} = 0$, or if coinciding values $\frac{b_{j}^{(1)}}{a_{j}^{(1)}}$ appear, and then a re-indexing such that the remaining values $x_{j} := -\frac{b_{j}^{(1)}}{a_{j}^{(1)}}$  are ordered by size, and arrive  at a spline function representation as in (\ref{sp}) with at most $n$ knots, i.e., $f_{2} \in \Sigma_{n}$.
Conversely, $s_{n}(t)$ in model (\ref{sp}) is a special case of (\ref{lone}) taking for example $q(t) = c^{(2)} t + b^{(2)}$, $a_{j}^{(2)} := \alpha_{j}$, $a_{j}^{(1)} :=1$,  $b_{j}^{(1)} := -x_{j}$ for $j=1, \ldots, n$.
Obviously, $s_{n}$ in (\ref{sp}) possesses  $2n+2$ parameters.
\end{proof}

\begin{remark} \label{rem11}
The transform from model (\ref{lone}) to (\ref{sp}) obviously covers also the well-known redundancy caused by positive scaling, see e.g.\ \cite{Phuong20}.
For a diagonal matrix ${\mathbf D}$ with positive weights, we always have 
$$ {\mathbf A}^{(2)} {\mathbf D} \sigma(t {\mathbf A}^{(1)} + {\mathbf b}^{(1)}) = {\mathbf A}^{(2)} \sigma( t  {\mathbf D}{\mathbf A}^{(1)}+  {\mathbf D}{\mathbf b}^{(1)}). $$
Assuming that all components of ${\mathbf A}^{(1)}$ do not vanish, we can take ${\mathbf D}= \diag (|a_{j}^{(1)}|^{-1})_{j=1}^{n}$ and therefore simplify the model (\ref{lone}) to an equivalent model, where ${\mathbf A}^{(1)}$ is replaced by a sign vector in $\{-1,1\}^{n}$. If we employ a source channel, then we can actually replace ${\mathbf A}^{(1)}$ by ${\mathbf 1}$, as shown in Lemma \ref{lemma1}.
\end{remark}

We summarize the transfer from model (\ref{lone}) to (\ref{sp}) in Algorithm \ref{alg1}.

\begin{algorithm}[ht]\caption{Transfer from (\ref{lone}) to (\ref{sp})}
\label{alg1}
\small{
\textbf{Input:} $c^{(2)}, \, b^{(2)} \in {\mathbb R}$,  ${\mathbf A}^{(1)}=(a_{j}^{(1)})_{j=1}^{n}, \, {\mathbf A}^{(2)}=((a_{j}^{(2)})_{j=1}^{n})^{T}, \,{\mathbf b}^{(1)}=(b^{(1)}_{j})_{j=1}^{n} \in {\mathbb R}^{n}$ in (\ref{lone}).

\begin{description}
\item{1.} Initialize $q_{1}:= c^{(2)}$, $q_{0}:= b^{(2)}$, $\balpha := \mathrm{zeros}(n)$, ${\mathbf x} := \mathrm{zeros}(n)$, $N_{2}:=n$. 
\item{2.} for $j=1:n$ do\\
$\alpha_j := a_j^{(2)} |a_j^{(1)}|;$ \\
if $a_j^{(1)} <0$ then $x_{j}:= -\frac{b_{j}^{(1)}}{a_{j}^{(1)}}$; $q_{1}:=q_{1}+a_{j}^{(1)}a_{j}^{(2)}$;
$q_{0}:=q_{0}+b_{j}^{(1)}a_{j}^{(2)}$; end(if)\\
if $a_j^{(1)} >0$ then $x_{j} := -\frac{b_{j}^{(1)}}{a_{j}^{(1)}}$; end(if)\\
if $a_j^{(1)} ==0$ then $N_{2}:=N_{2}-1$; $q_{0} :=q_{0}+a_{j}^{(2)} \max \{ b_{j}^{(1)},0\}$; remove $x_{j}$ from ${\mathbf x}$, remove $\alpha_{j}$ from $\balpha$; end(if) \\
\hspace*{-4mm} end(for($j$))
\item{3.} Apply a permutation such that the components of ${\mathbf x}$ are ordered by size, $x_{1}\le x_{2} \le \ldots \le x_{N_{2}}$. Use the same permutation to order the corresponding coefficient vector $\balpha$. \\
for $j=2:N_{2}$ do\\
\null \quad if $x_{j} ==x_{j-1}$ then $\alpha_{j-1}:=\alpha_{j-1}+\alpha_{j}$; $N_{2}:=N_{2}-1$; remove $x_{j}$ from ${\mathbf x}$ and $\alpha_{j}$ from $\balpha$; \\
\null \quad end(if)\\
 end(for($j$))
\end{description}

\textbf{Output:} $q_{0}, \, q_{1} \in {\mathbb R}$, $\balpha =(\alpha_{\ell})_{\ell=1}^{N_{2}}$, ${\mathbf x} = (x_{\ell})_{\ell=1}^{N_{2}}$ determining a CPL spline function with $N_{2}$ knots.}
\end{algorithm}

\section{ReLU NN for two hidden layers}
\label{sec:two}

We consider now the model ${\mathcal Y}_{n_1,n_2}$ of functions $f_{3}: {\mathbb R} \to {\mathbb R}$ with three layers (two hidden layers)  of the form 
\begin{align} \nonumber
 f_{3}(t) &= (c^{(3)} t + b^{(3)}) + {\mathbf A}^{(3)} \, \sigma \Big( {\mathbf A}^{(2)}  \Big(\sigma ( t {\mathbf A}^{(1)}  + {\mathbf b}^{(1)}) \Big) + t {\mathbf c}^{(2)} + {\mathbf b}^{(2)} \Big)  \\
 \label{twol}
 &= \textstyle c^{(3)} t + b^{(3)} + \sum\limits_{j=1}^{n_{2}} a^{(3)}_{j} \sigma\left( \sum\limits_{k=1}^{n_{1}} a^{(2)}_{j,k} \, \sigma(a^{(1)}_{k} t + b^{(1)}_{k}) + c_{j}^{(2)} t +b^{(2)}_{j} \right) ,
 \end{align}
where $f_{3}$ depends on the parameter set 
$$\Lambda_{2} = \{ c^{(3)}, b^{(3)}, \,   {\mathbf A}^{(1)}, {\mathbf A}^{(2)}, {\mathbf A}^{(3)}, {\mathbf b}^{(1)}, {\mathbf c}^{(2)}, {\mathbf b}^{(2)} \} \subset {\mathbb R}^{n_{1} n_{2}+2 n_{1} + 3n_{2}+2}$$ 
with
${\mathbf A}^{(1)} = (a_{k}^{(1)})_{k=1}^{n_{1}} \in {\mathbb R}^{n_{1} \times 1}$,  ${\mathbf b}^{(1)}=(b_{k}^{(1)})_{k=1}^{n_{1}} \in {\mathbb R}^{n_{1}}$, ${\mathbf A}^{(3)} =((a_{j}^{(3)})_{j=1}^{n_{2}})^{T}\in {\mathbb R}^{1 \times n_{2}}$, ${\mathbf c}^{(2)} = (c_{j}^{(2)})_{j=1}^{n_{2}}, {\mathbf b}^{(2)} = (b_{j}^{(2)})_{j=1}^{n_{2}} \in {\mathbb R}^{n_{2}}$, 
${\mathbf A}^{(2)} = (a^{(2)}_{j,k})_{j,k=1}^{n_{2},n_{1}} \in {\mathbb R}^{{n_2 \times n_1}}$, and $c^{(3)}, b^{(3)} \in {\mathbb R}$. 

\noindent
\textbf{Assumption:}
In this section, we assume that ${\mathbf A}^{(1)}$ has no vanishing entries and that $-\frac{b_{k}^{(1)}}{a_{k}^{(1)}}$ are pairwise distinct, since otherwise, by Lemma \ref{lemma1}, the first layer has a true width being smaller than $n_{1}$.

\subsection{Representation  as a continuous linear spline function model} 

We start with the following observation that will be an essential tool for our further investigations of the structure of ReLU NN with more than one hidden layers.

\begin{lemma}\label{lemsigma}
For a given $\textnormal{CPL}$ spline function 
$ f(t) =  ct + b + \sum\limits_{k=1}^{n} \alpha_{k} \, \sigma(t - x_{k})  \in \Sigma_{n}$ with $x_{1}< x_{2}< \ldots < x_{n}$
we have 
\begin{equation} \label{sigf}
(\sigma \circ f)(t) = \textstyle \sigma(f(t)) = \tilde{c}t + \tilde{b} + \sum\limits_{k=1}^{n} \tilde{\alpha}_{k} \, \sigma(t - x_{k}) + \sum\limits_{\nu=0}^{n} \beta_{\nu}  \, \sigma(t - \tilde{x}_{\nu}), 
\end{equation}
where, with $\chi_{T}(t)$ denoting the characteristic function of the subset $T \subset {\mathbb R}$, 
\begin{align*}
\tilde{x}_{\nu} &= \left\{ \begin{array}{ll}
- \frac{\eta_{\nu}}{\mu_{\nu}} & \mu_{\nu} \neq 0,\\
 - \infty & \mu_{\nu} =0, \end{array} \right.\\
	\tilde{c} &= - \sigma(-c),\\
	\tilde{b} &= b \, \chi_{(-\infty,0)}(c)+ \sigma(b) \, \chi_{\{0\}}(c),\\
	\tilde{\alpha}_k &=  \alpha_{k} \, \chi_{(0,\infty)}(f(x_{k}))  + (\sigma(\mu_k)+\sigma(-\mu_{k-1})) \, \chi_{\{0\}}(f(x_{k})),\\
	\beta_\nu &= |\mu_\nu| \, \chi_{(x_\nu,x_{\nu+1})}(\tilde{x}_\nu).
\end{align*}
Here, $\mu_{k}:= c+ \sum\limits_{\ell=1}^{k} \alpha_{\ell}$ and $\eta_{k} := b - \sum\limits_{\ell=1}^{k} \alpha_{\ell} x_{\ell}$. In particular, $\sigma \circ f \in \Sigma_{2n+1}$, i.e., $\sigma \circ f$ possesses at most $2n+1 $ breakpoints.
\end{lemma}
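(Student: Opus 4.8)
The plan is to analyze $f$ piecewise on the intervals determined by its breakpoints $x_1 < x_2 < \ldots < x_n$, together with the additional "virtual" breakpoints $-\infty =: x_0$ and $x_{n+1} := \infty$. On the interval $(x_k, x_{k+1})$ the function $f$ is affine linear: its slope equals $\mu_k := c + \sum_{\ell=1}^{k}\alpha_\ell$ and, using the telescoping identity $\sigma(t-x_\ell) = t - x_\ell$ for $t > x_\ell$, its constant term equals $\eta_k := b - \sum_{\ell=1}^{k}\alpha_\ell x_\ell$, so that $f(t) = \mu_k t + \eta_k$ on that interval. The key point is that $\sigma(f(t)) = \max\{f(t),0\}$ differs from $f(t)$ exactly where $f$ is negative; on each linear piece this negative region is itself an interval (possibly empty or the whole piece), whose endpoint, when $\mu_k \neq 0$, is the zero crossing $\tilde x_k = -\eta_k/\mu_k$. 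First I would establish that $\sigma(f(t)) - \sigma(-f(t)) = f(t)$ does not directly help; instead I want a \emph{global} CPL representation of $\sigma \circ f$, which I obtain by adding to $f$ the correction $\sigma(-f)$: we have $\sigma(f(t)) = f(t) + \sigma(-f(t))$, and $\sigma(-f)$ is again a CPL spline whose breakpoints I must identify.

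The main work is to compute the breakpoint data of $\sigma \circ f$ by comparing slopes across each point. At an original breakpoint $x_k$ the slope of $f$ jumps by $\alpha_k$ (from $\mu_{k-1}$ to $\mu_k$); the slope of $\sigma \circ f$ at $x_k$ depends on the sign of $f(x_k)$. If $f(x_k) > 0$ then near $x_k$ we have $\sigma \circ f = f$, so the slope jump is $\alpha_k$, giving $\tilde\alpha_k = \alpha_k$; if $f(x_k) < 0$ then near $x_k$ we have $\sigma \circ f \equiv 0$, so $\tilde\alpha_k = 0$; the boundary case $f(x_k) = 0$ requires care and produces the term $(\sigma(\mu_k) + \sigma(-\mu_{k-1}))\chi_{\{0\}}(f(x_k))$, since then the outgoing slope is $\max\{\mu_k,0\}$ and the incoming slope is $\min\{\mu_{k-1},0\}$, whose difference is $\sigma(\mu_k) + \sigma(-\mu_{k-1})$. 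At each zero crossing $\tilde x_k = -\eta_k/\mu_k$ lying strictly inside $(x_k, x_{k+1})$, the function $\sigma \circ f$ has a new breakpoint where its slope jumps from $0$ (on the side where $f<0$) to $\mu_k$ or vice versa; in either case the slope increase is $|\mu_k|$, which explains $\beta_k = |\mu_k|\,\chi_{(x_k,x_{k+1})}(\tilde x_k)$, and when $\mu_k = 0$ there is no crossing, consistently encoded by $\tilde x_k = -\infty$ so that $\sigma(t - \tilde x_k)$ contributes nothing relevant (or one simply notes $\beta_k = 0$). Finally I would pin down the affine part $\tilde c t + \tilde b$ by evaluating the behavior as $t \to -\infty$: on $(-\infty, x_1)$ we have $f(t) = ct + b$, hence $\sigma(f(t))$ equals $ct+b$ if $c<0$, equals $0$ if $c>0$, and equals $\sigma(b)$ if $c=0$, which is exactly $\tilde c = -\sigma(-c)$ and $\tilde b = b\,\chi_{(-\infty,0)}(c) + \sigma(b)\,\chi_{\{0\}}(c)$.

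To finish, I would verify that the proposed right-hand side of \eqref{sigf} reproduces $\sigma \circ f$: both sides are CPL splines, they agree on $(-\infty, x_1)$ by the choice of $\tilde c, \tilde b$, and at every candidate breakpoint $x_1, \ldots, x_n, \tilde x_0, \ldots, \tilde x_n$ the jump in slope matches by the case analysis above; since a continuous piecewise linear function is determined by its value and slope on one unbounded interval together with all its slope jumps, the two sides coincide everywhere. The count $\sigma \circ f \in \Sigma_{2n+1}$ then follows by tallying potential active breakpoints: the $n$ points $x_k$ and the $n+1$ points $\tilde x_0, \ldots, \tilde x_n$, but $\tilde x_0$ can coincide with the affine regime or be $-\infty$, and more carefully one checks that at most $2n+1$ of these can be simultaneously active — indeed on each of the $n+1$ pieces $(x_k, x_{k+1})$ at most one new breakpoint $\tilde x_k$ appears, and a short argument shows the two "extreme" configurations cannot both be fully active, or simply that the leftmost possible new breakpoint merges with the linear part.

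The step I expect to be the main obstacle is the careful bookkeeping at the degenerate cases — when $f(x_k) = 0$ (an original breakpoint sits exactly on the zero level of $f$) or when a zero crossing $\tilde x_k$ coincides with an original breakpoint $x_k$ or $x_{k+1}$, or when $\mu_k = 0$ so the "crossing" is at infinity. These coincidences are precisely what the characteristic-function factors $\chi_{\{0\}}$, $\chi_{(x_\nu, x_{\nu+1})}$ in the statement are designed to handle, and making sure the slope-jump accounting is consistent across all overlapping cases (so that no breakpoint is double-counted and the formulas for $\tilde\alpha_k$ and $\beta_\nu$ add up correctly) is the delicate part; the generic case where all $f(x_k) \neq 0$ and all $\mu_k \neq 0$ with distinct crossings is essentially immediate from the slope-jump principle.
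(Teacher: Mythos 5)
Your proposal is correct and follows essentially the same route as the paper: piecewise-linear analysis on the intervals $(x_k,x_{k+1})$ with $f=\mu_k t+\eta_k$, new breakpoints only at zero crossings $\tilde x_k=-\eta_k/\mu_k$ inside the respective interval, coefficients obtained as one-sided slope jumps (including the degenerate case $f(x_k)=0$), and the affine part from the behavior as $t\to-\infty$. The only superfluous part is your closing worry about reducing the breakpoint count: the bound $\sigma\circ f\in\Sigma_{2n+1}$ already follows from the $n+(n+1)=2n+1$ terms in the representation, so no further argument is needed there.
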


\begin{proof}
1. Let $x_{0}:=-\infty$ and $x_{n+1} := \infty$. 
The CPL spline  function $f$ 
has for $t \in (x_{k}, x_{k+1})$ the form
$$ f(t) = \mu_{k} t + \eta_{k}, \qquad k=0, \ldots, n, $$
where $\mu_{0}:= c$, $\eta_{0} := b$, 
\begin{align*} \textstyle \mu_{k} := \mu_{k-1} + \alpha_{k} = c + \sum\limits_{\ell=1}^{k} \alpha_{\ell}, \quad 
\eta_{k} := \eta_{k-1} - \alpha_{k}x_{k} = b - \sum\limits_{\ell=1}^{k} \alpha_{\ell} x_{\ell}, \quad k=1, \ldots , n.
\end{align*}
In other words, if $\partial_{+}$ and $\partial_{-}$ denote the right-sided and left-sided derivatives, respectively, then
$$ \partial_{-} f(x_{1}) = \mu_{0}, \quad \partial_{+} f(x_{n}) = \mu_{n}, \quad \partial_{+} f(x_{k}) = \partial_{-}f(x_{k+1}) = \mu_{k}, \quad k=1, \ldots , n-1. $$
Furthermore, the coefficients in the representation of $f$ satisfy $\alpha_{k} = \mu_{k}- \mu_{k-1} = \partial_{+} f(x_{k}) - \partial_{-} f(x_{k})$. 

2. Obviously, $\sigma \circ f$ is again a CPL spline function. The goal is to find a representation of $\sigma \circ f$  as a function in $\Sigma_{N}$ for suitable $N \in {\mathbb N}$.
The definition of $\sigma(t)$ implies that a value $x \in {\mathbb R}$ can only be  a breakpoint of $\sigma \circ f$, if it is already a breakpoint of $f$, i.e., $x \in \{x_{1}, \ldots , x_{n}\}$, or if it is a singular zero of $f$, i.e., $f(x)=0$ and $x$ is not inside an open interval, where $f$ is constantly vanishing. \\
Since $f$ is linear on each interval $(x_{k}, x_{k+1})$, $k=0, \ldots , n$, $f$ has at most one zero in $(x_{k}, x_{k+1})$ if $\mu_{k} \neq 0$,  and this zero $\tilde{x}_{k}$ satisfies $f(\tilde{x}_{k}) = \mu_{k} \tilde{x}_{k} + \eta_{k} =0$. 
Thus, the possible new breakpoints are found as 
$ \tilde{x}_{k} :=
- \frac{\eta_{k}}{\mu_{k}}$ for $\mu_{k} \neq 0$. For 
$\mu_{k} =0$, the function $f$ is constant in $(x_{k}, x_{k+1})$. In this case $\sigma(f(t)) = \max \{ f(t), 0 \}$ is also constant in $(x_{k}, x_{k+1})$, and we do not get a new breakpoint.  To simplify the notation, we denote $\tilde{x}_{k} := - \infty$ in this case.
Thus, we can write $\sigma \circ f$ in the form (\ref{sigf}) with $\tilde{x}_{\nu}$ as given in Lemma \ref{lemsigma}, and where we still need to determine $\tilde{c}$, $\tilde{b}$, $\tilde{\alpha}_{k}$, $\beta_{\nu}$. 

3. For the parameters $\tilde{c}$ and $\tilde{b}$ we find
$$ \tilde{c} = \lim_{t  \to - \infty} \partial(\sigma(f(t))) = \left\{ \begin{array}{ll}  c & c < 0,\\ 0 & c \ge 0, \end{array} \right.
\quad \tilde{b} = \lim_{t  \to - \infty} (\sigma(f(t)) - \tilde{c} t) = \left\{ \begin{array}{ll} b & c < 0,\\
\sigma(b) & c =0, \\
0 & c >0. \end{array} \right.
$$

\begin{figure}[h]
\begin{center}
	\includegraphics[scale=0.3]{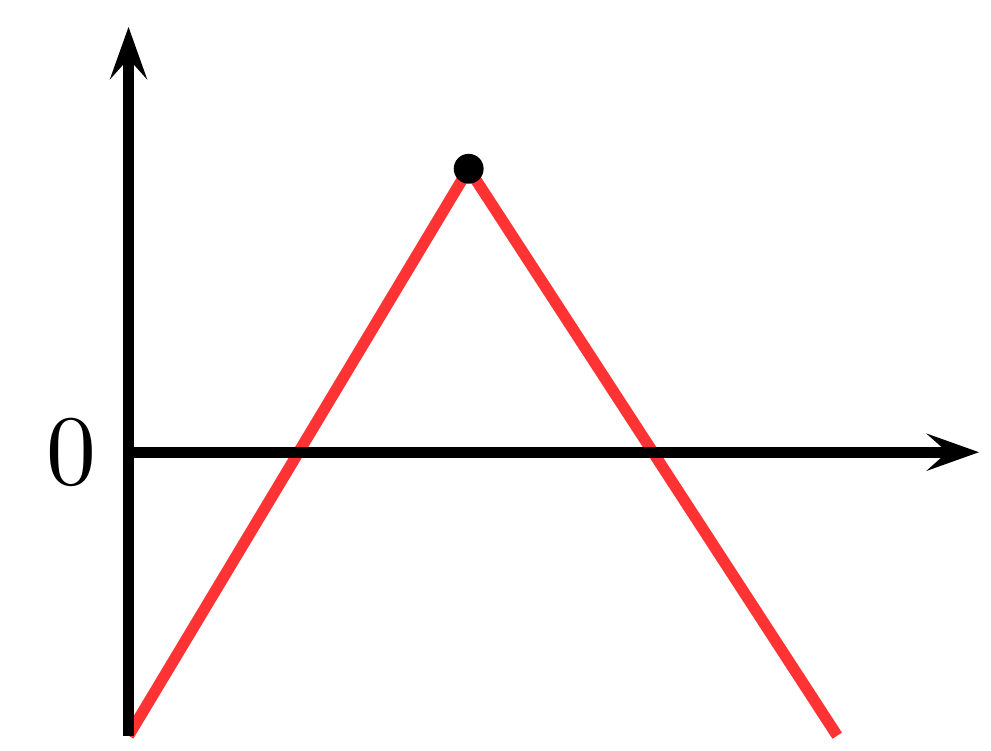}
	\includegraphics[scale=0.3]{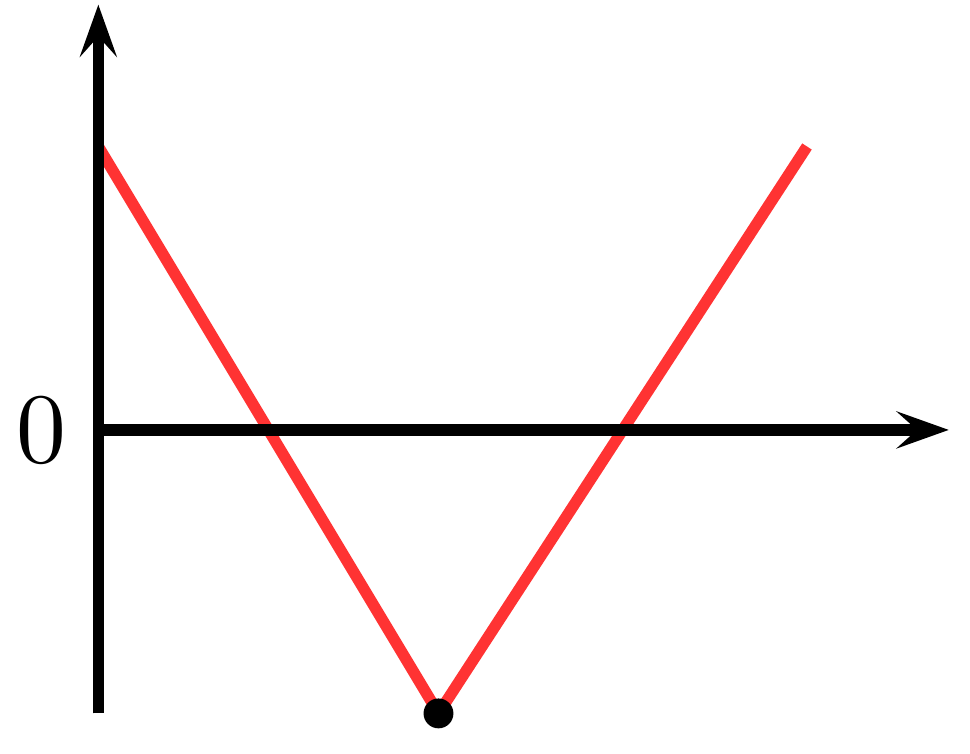}\\
	\includegraphics[scale=0.3]{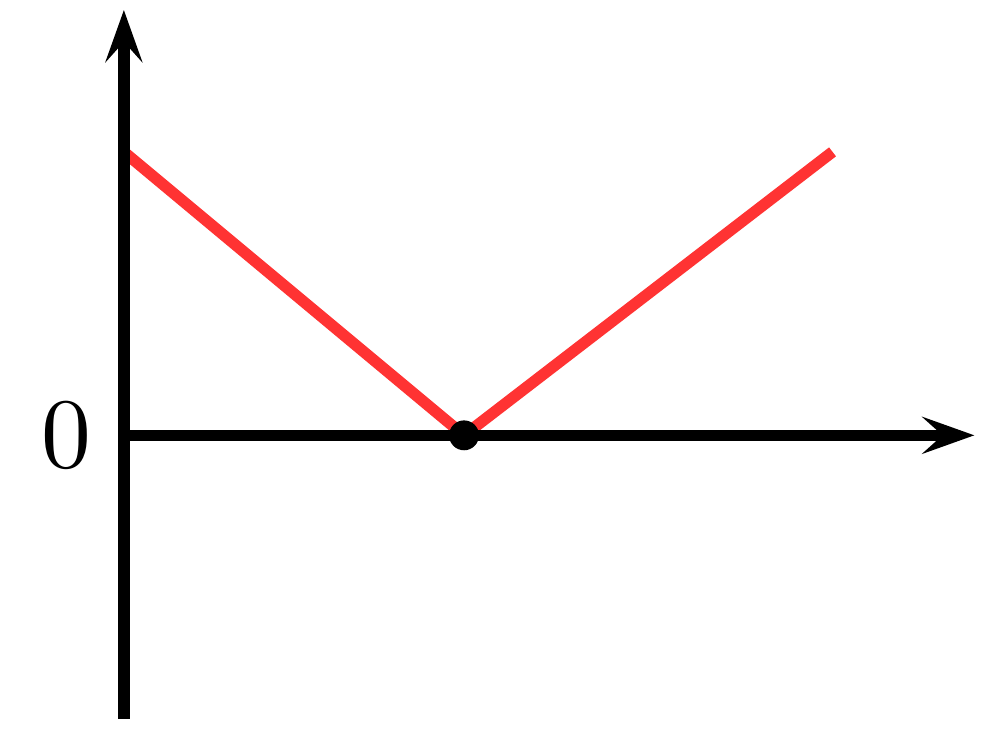}
	\includegraphics[scale=0.3]{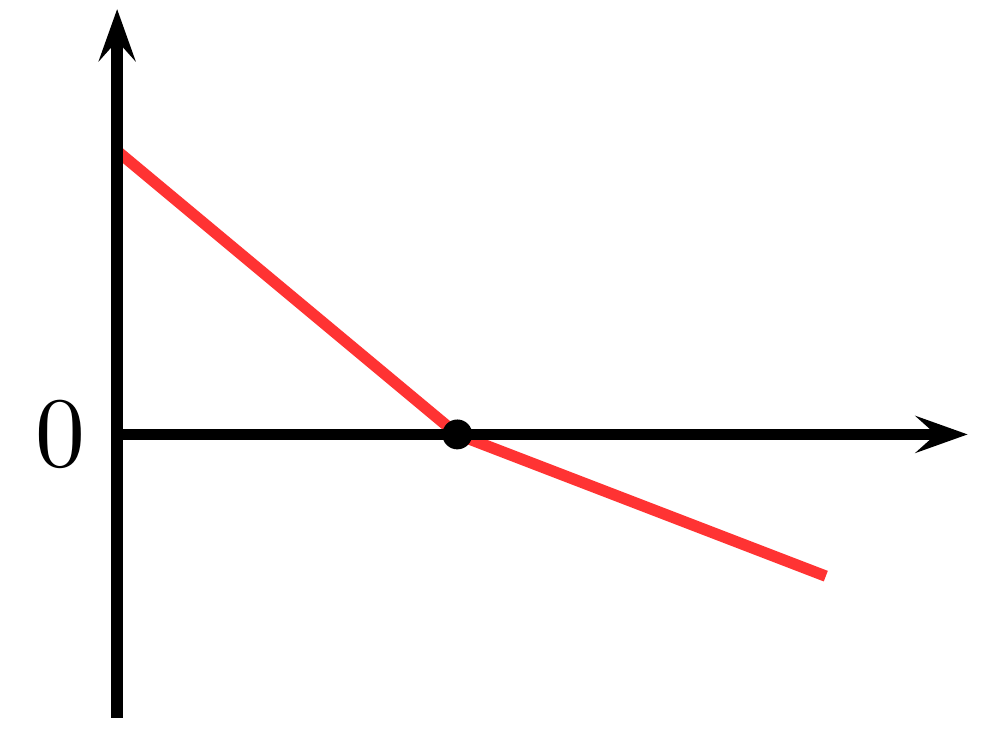}
	\includegraphics[scale=0.3]{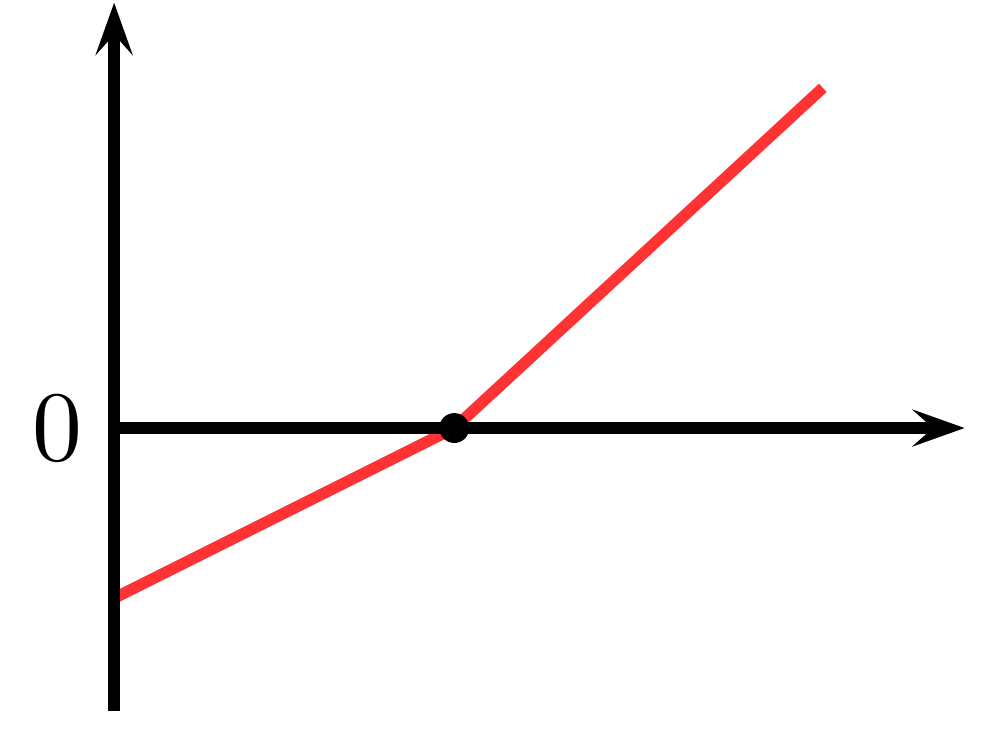}
	\includegraphics[scale=0.3]{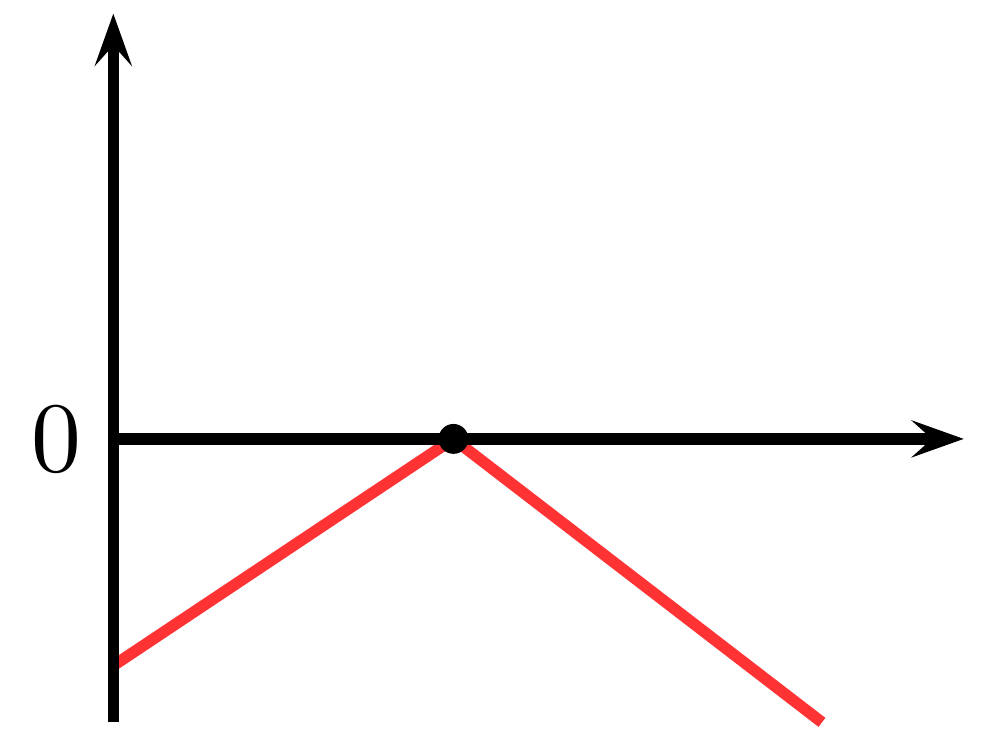}
\end{center}
\caption{\small{Illustration of different situations for breakpoints $f(x_{k})$.\\ First row: $f(x_{k}) = \sigma(f(x_{k})) >0$, and $f(x_{k})<0$ with $\sigma(f(x_{k})) =0$.\\ Second row: Four cases of slopes $\partial_{+}f(x_{k})$ and $\partial_{-}f(x_{k})$.}}
\label{fig0}
\end{figure}

Now, we determine $\tilde{\alpha}_{k}$. If $f(x_{k})>0$,  then the continuity of $f$ implies that $(\sigma \circ f)(t) = f(t)$ in an $\epsilon$-neighborhood of $x_{k}$ and therefore $\tilde{\alpha}_{k}  = \alpha_{k}$. 
If $f(x_{k})<0$, then $(\sigma \circ f)(t) = 0$ in an $\epsilon$-neighborhood of $x_{k}$ and therefore $\tilde{\alpha}_{k}=0$.
Finally, if $f(x_{k})=0$, we consider the left and right $\epsilon$-neighborhood of $x_{k}$. We obtain with $\partial_{+}f(x_{k}) = \mu_{k}$ and $\partial_{-}f(x_{k}) = \mu_{k-1}$ that 
$$ \partial_{+}(\sigma \circ f)(x_{k}) = \sigma(\mu_{k}),
\qquad \partial_{-}(\sigma \circ f)(x_{k}) = -\sigma(-\mu_{k-1}), 
$$
and therefore $\tilde{\alpha}_{k} = \partial_{+}(\sigma \circ f)(x_{k})-\partial_{-}(\sigma \circ f)(x_{k}) = \sigma(\mu_k)+\sigma(-\mu_{k-1})$, see Figure \ref{fig0}.

Finally, we determine $\beta_{\nu}$.
If  $\tilde{x}_{k} \notin (x_{k}, x_{k+1})$, then $f$ does not have a singular zero in $(x_{k}, x_{k+1})$, i.e., $\tilde{x}_{k}$ is not a new breakpoint of $\sigma \circ f$ and  $\beta_{k}=0$. We say that this ``breakpoint'' is not active.
In particular, $\tilde{x}_{k} =- \infty$ is never an active breakpoint. If $\tilde{x}_{k} \in (x_{k}, x_{k+1})$, then 
$$ \partial_{+} ( \sigma \circ f)(\tilde{x}_{k}) = \sigma(\mu_{k}), \qquad \partial_{-} ( \sigma \circ f)(\tilde{x}_{k}) = -\sigma(-\mu_{k}), $$
and thus  $\beta_{k} = |\mu_{k}|$. 
\end{proof}

\begin{remark}
While Lemma \ref{lemsigma} shows that $\sigma \circ f \in \Sigma_{2n+1}$ for $f \in \Sigma_{n}$,  the representation (\ref{sigf})  contains for $n > 1$ always breakpoints that are not active. The largest number of breakpoints appears if $f$ possesses a singular zero $\tilde{x}_{k}$ in each interval $(x_{k}, x_{k+1})$, $k=0, \ldots , n$. But in this case, either all function values $f(x_{2k})$ or all function values $f(x_{2k+1})$ are negative, i.e., either all $x_{2k}$ or all $x_{2k+1}$ are not longer active breakpoints in $\sigma \circ f$, such that $\sigma \circ f$ has at most $n+1 + \lfloor \frac{n+1}{2} \rfloor$ active breakpoints.
\end{remark}

With these preliminaries we now study  the expressivity of the model ${\mathcal Y}_{n_{1},n_{2}}$ and show in the following theorem, how $f_{3} \in {\mathcal Y}_{n_{1},n_{2}}$ can be represented as a CPL spline function in $\Sigma_{N}$ with $N=(n_{1}+1)(n_{2}+1)-1$.

\begin{theorem}\label{theo1}
For $n_{1} \ge 1$ and $n_{2} \ge 2$ the function $f_{3} \in {\mathcal Y}_{n_{1},n_{2}}$ in  $(\ref{twol})$ 
can be represented  as
\begin{equation}\label{mo2}
{f}_{3}(t) = q_{1}t + q_{0} + \sum_{\ell=1}^{n_{1}} \alpha_{\ell} \, \sigma(t-x_{\ell}) + \sum_{j=1}^{n_{2}}\sum_{\nu=0}^{n_{1}} \alpha_{j,\nu} \, \sigma(t-x_{j,\nu}), 
\end{equation}
where $q_{0,} q_{1}, \alpha_{\ell}, x_{\ell}, \alpha_{j,\nu}, \, x_{j,\nu}$ are real parameters with 
$ -\infty < x_{1} < x_{2} < \ldots  < x_{n_{1}} < \infty$.
Furthermore,  all active breakpoints $x_{j,\nu}$ (i.e., breakpoints with $\alpha_{j,\nu} \neq 0$) satisfy $x_{j,\nu} \in (x_{\nu}, x_{\nu+1})$ for $j=1, \ldots , n_{2}$, $\nu=0, \ldots, n_{1}$, with the convention $x_{0} := -\infty$ and $x_{n_{1}+1} := \infty$. 
\end{theorem}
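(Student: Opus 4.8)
The plan is to work layer by layer, using Lemma~\ref{lemsigma} as the engine. Write the inner layer output as $g_k(t) := \sigma(a_k^{(1)}t + b_k^{(1)})$. By the Assumption and Lemma~\ref{lemma1}, the partial sum $h_j(t) := \sum_{k=1}^{n_1} a_{j,k}^{(2)} g_k(t) + c_j^{(2)} t + b_j^{(2)}$ is, for each fixed $j$, a CPL spline function in $\Sigma_{n_1}$ whose (at most $n_1$) breakpoints are exactly the values $x_\ell := -b_\ell^{(1)}/a_\ell^{(1)}$, $\ell = 1,\dots,n_1$, independent of $j$; order them so that $x_1 < \dots < x_{n_1}$. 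Thus each $h_j \in \Sigma_{n_1}$ has the form $h_j(t) = c_j t + b_j + \sum_{\ell=1}^{n_1} \alpha_{j,\ell} \sigma(t - x_\ell)$ for suitable reals (obtained by running Algorithm~\ref{alg1} on the $j$-th inner affine map). This is the point where the common coarse grid $x_1,\dots,x_{n_1}$ enters.

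Next I would apply Lemma~\ref{lemsigma} to each $h_j$: since $h_j \in \Sigma_{n_1}$ with breakpoints among $x_1,\dots,x_{n_1}$, we get
\begin{equation*}
\sigma(h_j(t)) = \tilde{c}_j t + \tilde{b}_j + \sum_{\ell=1}^{n_1} \tilde{\alpha}_{j,\ell}\,\sigma(t - x_\ell) + \sum_{\nu=0}^{n_1} \beta_{j,\nu}\,\sigma(t - \tilde{x}_{j,\nu}),
\end{equation*}
where, crucially, the newly created breakpoints $\tilde{x}_{j,\nu}$ come from singular zeros of $h_j$ and Lemma~\ref{lemsigma} guarantees $\beta_{j,\nu} \ne 0$ only when $\tilde{x}_{j,\nu} \in (x_\nu, x_{\nu+1})$ (with $x_0 = -\infty$, $x_{n_1+1} = \infty$). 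Then $f_3(t) = c^{(3)}t + b^{(3)} + \sum_{j=1}^{n_2} a_j^{(3)} \sigma(h_j(t))$ is a sum of such splines, so expanding and collecting terms gives precisely the form~(\ref{mo2}): set $q_1 := c^{(3)} + \sum_j a_j^{(3)} \tilde{c}_j$, $q_0 := b^{(3)} + \sum_j a_j^{(3)} \tilde{b}_j$, $\alpha_\ell := \sum_j a_j^{(3)} \tilde{\alpha}_{j,\ell}$ (the coarse-grid coefficients, one block of $n_1$ terms), $x_{j,\nu} := \tilde{x}_{j,\nu}$ and $\alpha_{j,\nu} := a_j^{(3)} \beta_{j,\nu}$ (the refinement blocks, $n_2(n_1+1)$ terms). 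Counting: $n_1 + n_2(n_1+1) = (n_1+1)(n_2+1) - 1 = N_{3,\max}$, matching the claimed bound. The localization $x_{j,\nu} \in (x_\nu, x_{\nu+1})$ for active breakpoints is inherited verbatim from Lemma~\ref{lemsigma}, and the ordering $x_1 < \dots < x_{n_1}$ from the Assumption.

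The main obstacle, and the step deserving the most care, is the bookkeeping around degenerate cases when forming the $h_j$: a coefficient $a_{j,k}^{(2)}$ may vanish, so that $h_j$ genuinely has fewer than $n_1$ breakpoints, and then the slopes $\mu_k$ feeding into Lemma~\ref{lemsigma} must be read off correctly; one must also confirm that even when $h_j$ is constant on some $(x_\nu, x_{\nu+1})$, no spurious active breakpoint is produced — but this is exactly the $\mu_\nu = 0 \Rightarrow \tilde{x}_{j,\nu} = -\infty$ convention in Lemma~\ref{lemsigma}, which makes $\beta_{j,\nu} = 0$. A secondary point is that two distinct refinement breakpoints $x_{j,\nu}$ and $x_{j',\nu}$ (or a refinement breakpoint and a coarse-grid $x_\ell$) could coincide; the statement as written does not require the $x_{j,\nu}$ to be distinct from each other, so no merging is needed here, but one should note that the representation~(\ref{mo2}) is then not yet in the reduced form~(\ref{sp}) — that reduction is precisely what Algorithm~\ref{alg1}-type post-processing (or the later sections) handles. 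So the proof reduces to: (i) invoke Lemma~\ref{lemma1} to fix the common grid, (ii) invoke Lemma~\ref{lemsigma} $n_2$ times, (iii) take the linear combination and relabel.
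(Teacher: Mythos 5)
Your proposal is correct and follows essentially the same route as the paper: rewrite each inner unit $f_{2,j}$ (your $h_j$) via Lemma \ref{lemma1} on the common ordered grid $x_1<\dots<x_{n_1}$, apply Lemma \ref{lemsigma} to each $\sigma(h_j)$, and collect the linear combination, with the localization of active breakpoints and the $\mu=0$ convention handling the degenerate cases exactly as in the paper's argument. Your explicit formulas for $q_1,q_0,\alpha_\ell,\alpha_{j,\nu}$ agree with (\ref{q1})--(\ref{alphanuk}).
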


\begin{proof} 1.
We employ the notation 
$$ f_{2,j}(t) := \sum_{k=1}^{n_{1}} a_{j,k}^{(2)} \sigma(a_{k}^{(1)} t + b_{k}^{(1)}) +c_{j}^{(2)} t  + b_{j}^{(2)}, \quad j=1, \ldots , n_{2}, 
$$
then $f_{3}$ in (\ref{twol}) reads
\begin{equation}\label{f3mitf2} f_{3}(t) = c^{(3)}t + b^{(3)}+ \sum_{j=1}^{n_{2}} a_{j}^{(3)} \, \sigma(f_{2,j}(t)). 
\end{equation}
 Lemma  \ref{lemma1}, we can always rewrite $f_{2,j}(t)$ as 
\begin{equation}\label{f2j} f_{2,j}(t) = \tilde{c}_{j}^{(2)} t + \tilde{b}_{j}^{(2)} + \sum_{k=1}^{n_{1}} \tilde{a}_{j,k}^{(2)} \sigma(t- x_{k}),
\end{equation}
where  $\tilde{c}_{j}^{(2)} $ and $\tilde{b}_{j}^{(2)} $  are determined as in (\ref{q}), i.e.,
\begin{equation}\label{bc}
\tilde{c}_{j}^{(2)} = c_{j}^{(2)} - \sum_{k=1}^{n_{1}} a_{j,k}^{(2)} \, \sigma(-a_{k}^{(1)}), \qquad 
\tilde{b}_{j}^{(2)} = b_{j}^{(2)} + \sum_{k=1}^{n_{1}} a_{j,k}^{(2)} \,  b_{k}^{(1)} \, \chi_{(-\infty,0)}(a_{k}^{(1)}).
\end{equation}
Further, $x_{1}< x_{2}< \ldots < x_{n_{1}}$ are  the ordered values in the set $\{-\frac{b_{k}^{(1)}}{a_{k}^{(1)}}: \, k=1, \ldots , n_{1} \}$ and $\tilde{\mathbf A}^{(2)} = (\tilde{a}^{(2)}_{j,k})_{j,k=1}^{n_{2},n_{1}}$  is obtained by permutation of the columns of 
$(a_{j,k}^{(2)} |a_{k}^{(1)}|)_{j,k=1}^{n_{2},n_{1}}$  according to the ordering of the breakpoints $x_{k}$, $k=1, \ldots , n_{1}$.
In other words, with  $\tilde{\mathbf c}^{(2)} =(\tilde{c}_{j}^{(2)})_{j=1}^{n_{2}}$, $\tilde{\mathbf b}^{(2)} = (\tilde{b}_{j}^{(2)})_{j=1}^{n_{2}}$ and  ${\mathbf x} =(x_{k})_{k=1}^{n_{1}}$ 
we can equivalently rewrite the model (\ref{twol}) as
\begin{equation} 
 f_{3}(t) = (c^{(3)}t + b^{(3)}) + {\mathbf A}^{(3)} \, \sigma \Big( \tilde{\mathbf A}^{(2)}  \sigma ( t {\mathbf 1}  - {\mathbf x}) + \tilde{\mathbf c}^{(2)}t + \tilde{\mathbf b}^{(2)} \Big), \label{twolalt1}
\end{equation}
 where ${\mathbf 1}$ is the vector of ones of length $n_{1}$.
  
2. All functions $f_{2,j}$ in (\ref{f2j}) can be understood  as  the output of a shallow ReLU NN, i.e., $f_{2,j}  \in {\mathcal Y}_{n_{1}}$, and possess the same (possible) breakpoints  $x_{k}$, $k=1, \ldots , n_{1}$. 
Let $x_{0}:=-\infty$ and $x_{n_{1}+1} := \infty$.  Then  
\begin{equation}\label{f2form}
f_{2,j}(t) = \mu_{j,\nu} t + \eta_{j,\nu}  \qquad \text{for} \quad t \in (x_{\nu}, x_{\nu+1}), \qquad j=1, \ldots , n_{2}, \; \nu=0, \ldots , n_{1},
\end{equation}
where $\mu_{j,0} := \tilde{c}_{j}^{(2)}$, $\eta_{j,0} :=\tilde{b}_{j}^{(2)}$, and 
\begin{equation} \label{munu}
\mu_{j,\nu}:= \tilde{c}^{(2)}_{j} + \sum\limits_{k=1}^{\nu} \tilde{a}_{j,k}^{(2)} , \qquad \eta_{j,\nu} :=  \tilde{b}^{(2)}_{j} -\sum\limits_{k=1}^{\nu} \tilde{a}_{j,k}^{(2)} x_{k}, \quad  j=1, \ldots , n_{2}, \; \nu=1, \ldots , n_{1}.
\end{equation}
We apply Lemma \ref{lemsigma} to  $f_{2,j}(t)$ for $j=1, \ldots, n_{2}$ and obtain from (\ref{f3mitf2}) 
that $f_{3}$ can be represented in the form 
\begin{align*}
	f_{3}(t) = q_1t+q_0+\sum_{k=1}^{n_1}\alpha_k\sigma(t-x_k)+\sum_{j=1}^{n_2}\sum_{\nu=0}^{n_1}\alpha_{j,\nu}\sigma(t-x_{j,\nu}),
\end{align*}
where $x_{1} < x_{2} < \ldots < x_{n_{1}}$ are the breakpoints of $f_{2,j}$,
$$
 x_{j,\nu}:=  \left\{ \begin{array}{ll}
-\frac{\eta_{j,\nu}}{\mu_{j,\nu}}  & \textrm{for} \, \mu_{j,\nu} \neq 0 , \\
-\infty & \textrm{for} \, \mu_{j,\nu} = 0, \end{array} \right. \qquad j=1, \ldots  ,n_2, \, \nu=0, \ldots , n_1, $$
with $\mu_{j,\nu}$ and $\eta_{j,\nu}$ defined in (\ref{munu})
and with 
\begin{align} \label{q1}
q_1 &:= c^{(3)}-\sum_{j=1}^{n_{2}} a_{j}^{(3)} \sigma(-\tilde{c}_j^{(2)}),\\
\label{q0}
	q_0 &:=  b^{(3)}+\sum_{j=1}^{n_{2}} a_{j}^{(3)} \Big(\tilde{b}_j^{(2)} \chi_{(-\infty,0)}(\tilde{c}_j^{(2)})+\sigma(\tilde{b}_j^{(2)})\chi_{\{0\}}(\tilde{c}_j^{(2)}) \Big),\\
	\label{alphak}
	\alpha_k &:= \sum_{j=1}^{n_{2}} a_{j}^{(3)} \Big(\tilde{a}_{j,k}^{(2)} \, \chi_{(0,\infty)}(f_{2,j}(x_{k})) + \chi_{\{0\}}(f_{2,j}(x_{k}))(\sigma(\mu_{j,k})+\sigma(-\mu_{j,k-1})) \Big),\\
	\label{alphanuk}
	\alpha_{j,\nu} &:= a_j^{(3)} |\mu_{j,\nu}| \chi_{(x_\nu,x_{\nu+1})}(x_{j,\nu}).
\end{align}
In particular, $x_{j,\nu}= - \infty$ is not an active breakpoint since we have $\alpha_{j,\nu}=0$ in this case.
\end{proof}

The representation of $f_{3}$ in Theorem \ref{theo1} implies

\begin{corollary}\label{cor1}
Any function $f_{3} \in {\mathcal Y}_{n_{1},n_{2}}$ in $(\ref{twol})$ is a piecewise continuous spline function with at most $n_{1}n_{2} + n_{1}+n_{2}$ breakpoints, i.e., all functions $f_3 \in {\mathcal Y}_{n_1,n_2}$ are also contained in $ \Sigma_{N}$ for $N=(n_{1}+1)(n_{2}+1)-1$.
\end{corollary}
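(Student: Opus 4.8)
The plan is to read off the claim directly from the spline representation \eqref{mo2} established in Theorem \ref{theo1}. Theorem \ref{theo1} already tells us that $f_3$ can be written as $q_1 t + q_0 + \sum_{\ell=1}^{n_1}\alpha_\ell\,\sigma(t-x_\ell) + \sum_{j=1}^{n_2}\sum_{\nu=0}^{n_1}\alpha_{j,\nu}\,\sigma(t-x_{j,\nu})$, so the only work left is to count the number of truncated-power terms and argue that this is the form \eqref{sp} of a CPL spline function once we discard inactive and coinciding breakpoints.

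First I would count: the first sum contributes at most $n_1$ breakpoints $x_1<\dots<x_{n_1}$, and the double sum contributes at most $n_2(n_1+1)$ breakpoints $x_{j,\nu}$, one for each pair $(j,\nu)$ with $j\in\{1,\dots,n_2\}$, $\nu\in\{0,\dots,n_1\}$. Hence the total number of (potential) breakpoints is at most $n_1 + n_2(n_1+1) = n_1 n_2 + n_1 + n_2$, which equals $(n_1+1)(n_2+1)-1$. Second, I would note that each $x_{j,\nu}$ equal to $-\infty$ carries coefficient $\alpha_{j,\nu}=0$ by the last line of Theorem \ref{theo1}, so these contribute nothing; after deleting them, merging any breakpoints that happen to coincide (summing their coefficients, exactly as in Step 3 of Algorithm \ref{alg1}), and re-indexing the survivors in increasing order, we obtain a representation of the form \eqref{sp} with $N \le n_1 n_2 + n_1 + n_2$ knots. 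Since $\Sigma_N\subseteq\Sigma_{N'}$ whenever $N\le N'$ (a spline with fewer knots is trivially also a spline with more knots, using zero coefficients), it follows that $f_3\in\Sigma_{(n_1+1)(n_2+1)-1}$, and as $f_3$ was arbitrary, ${\mathcal Y}_{n_1,n_2}\subseteq\Sigma_{(n_1+1)(n_2+1)-1}$. The continuity (``piecewise continuous'' in the statement — really continuous piecewise linear) is immediate since every term $\sigma(t-x)$ and the linear part $q_1 t + q_0$ is continuous.

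There is essentially no obstacle here: the corollary is a bookkeeping consequence of Theorem \ref{theo1}, and the only point that deserves a sentence is the arithmetic identity $n_1 + n_2(n_1+1) = (n_1+1)(n_2+1)-1$ together with the observation that coinciding or infinite breakpoints can only decrease the count. If one wanted to be careful about the case $n_2 \ge 2$ hypothesis of Theorem \ref{theo1}, I would remark that for $n_2 = 1$ the model ${\mathcal Y}_{n_1,1}$ reduces to a shallow-type situation already covered by Lemma \ref{lemma1} (or that the same representation argument goes through verbatim), and for $n_2 = 0$ the statement is vacuous. So the proof is a short paragraph invoking Theorem \ref{theo1}, doing the count, and citing the reduction-to-\eqref{sp} step from Algorithm \ref{alg1}.
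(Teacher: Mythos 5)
Your proposal is correct and matches the paper's treatment: the corollary is stated in the paper as an immediate consequence of Theorem \ref{theo1}, and your counting argument ($n_1 + n_2(n_1+1) = (n_1+1)(n_2+1)-1$, with inactive or coinciding breakpoints only lowering the count) is exactly the intended reasoning. The only tiny imprecision is the aside on $n_2=1$: that case is handled by Lemma \ref{lemsigma} (composing $\sigma$ with a CPL spline) rather than by Lemma \ref{lemma1}, but as you note the representation argument of Theorem \ref{theo1} goes through verbatim there, so nothing is affected.
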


Moreover, the proof of Theorem \ref{theo1} implies, how we can transfer the model ${\mathcal Y}_{n_1,n_2}$ into the model $\Sigma_N$, see Algorithm \ref{algo2} in Section \ref{sec:more}.

\begin{corollary}\label{corscaling}
The \textnormal{ReLU DNN} model ${\mathcal Y}_{n_{1},n_{2}}$ in $(\ref{DNN})$ can be equivalently described in the form 
\begin{equation}
f_{3}(t) = (c^{(3)}t + b^{(3)}) + {\mathbf A}^{(3)} \, \sigma \Big({\mathbf A}^{(2)}  \sigma ( t {\mathbf 1}  - {\mathbf x}) + {\mathbf c}^{(2)}t + {\mathbf b}^{(2)} \Big), \label{twolalt}
\end{equation}
depending on the parameter set $\{{\mathbf A}^{(3)}, {\mathbf A}^{(2)}, \, {b}^{(3)}, \, {\mathbf b}^{(2)}, \, {\mathbf x}, \, c^{(3)}, \, {\mathbf c}^{(2)} \}$ where ${\mathbf A}^{(3)} \in {\mathbb R}^{1 \times n_{2}}$, ${\mathbf A}^{(2)} \in {\mathbb R}^{n_{2} \times n_{1}}$, ${\mathbf b}^{(2)} \in {\mathbb R}^{n_{2}}$, ${\mathbf x}\in {\mathbb R}^{n_{1}}$ with $x_{1}<x_{2}< \ldots < x_{n_{1}}$, $b^{(3)}, c^{(3)} \in {\mathbb R}$, and  ${\mathbf c}^{(2)} = \sign({\mathbf c}^{(2)} ) \in \{0,-1,1\}^{n_{2}}$. Thus, ${\mathcal Y}_{n_{1},n_{2}}$ depends on at most $n_{1}n_{2}+2n_{2}+n_{1}+2$ real parameters and $n_{2}$ sign parameters.
\end{corollary}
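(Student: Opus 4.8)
The plan is to apply the positive-scaling reduction described in \cref{rem11} at each hidden layer in turn, starting from the representation \eqref{twolalt1} already established in the proof of \cref{theo1}. Since the first hidden layer has already been normalized there, so that ${\mathbf F}_{1}(t) = t{\mathbf 1} - {\mathbf x}$ with $x_{1} < \dots < x_{n_{1}}$ pairwise distinct, it remains only to deal with the source-channel vector ${\mathbf c}^{(2)}$ at the second layer.

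First I would observe that the scaling identity $\sigma({\mathbf D}{\mathbf F}_{1}(t)) = {\mathbf D}\sigma({\mathbf F}_{1}(t))$ for a diagonal ${\mathbf D} = \diag(d_{1}, \dots, d_{n_{2}})$ with $d_{j} > 0$ applied to the second layer of \eqref{twolalt1} lets us replace the argument ${\mathbf A}^{(2)}\sigma(t{\mathbf 1}-{\mathbf x}) + {\mathbf c}^{(2)}t + {\mathbf b}^{(2)}$ by ${\mathbf D}^{-1}\bigl({\mathbf A}^{(2)}\sigma(t{\mathbf 1}-{\mathbf x}) + {\mathbf c}^{(2)}t + {\mathbf b}^{(2)}\bigr)$ at the cost of multiplying ${\mathbf A}^{(3)}$ from the right by ${\mathbf D}$. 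Component-wise, the $j$-th scalar inside $\sigma$ becomes $d_{j}^{-1}\bigl(\sum_{k}a_{j,k}^{(2)}\sigma(t-x_{k}) + c_{j}^{(2)}t + b_{j}^{(2)}\bigr)$. If $c_{j}^{(2)} \neq 0$, choosing $d_{j} := |c_{j}^{(2)}|$ turns the linear coefficient into $\sign(c_{j}^{(2)}) \in \{-1,1\}$; if $c_{j}^{(2)} = 0$, we set $d_{j} := 1$ and leave that coordinate unchanged, giving $\sign(c_{j}^{(2)}) = 0$. Collecting these choices into ${\mathbf D}$, and absorbing ${\mathbf D}$ into ${\mathbf A}^{(3)}$ and ${\mathbf D}^{-1}$ into ${\mathbf A}^{(2)}$, ${\mathbf b}^{(2)}$, we arrive at exactly \eqref{twolalt} with ${\mathbf c}^{(2)} = \sign({\mathbf c}^{(2)}) \in \{0,-1,1\}^{n_{2}}$.

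For the parameter count I would simply enumerate the free real entries remaining: ${\mathbf A}^{(2)} \in {\mathbb R}^{n_{2} \times n_{1}}$ contributes $n_{1}n_{2}$, ${\mathbf A}^{(3)} \in {\mathbb R}^{1 \times n_{2}}$ contributes $n_{2}$, ${\mathbf b}^{(2)} \in {\mathbb R}^{n_{2}}$ contributes $n_{2}$, ${\mathbf x} \in {\mathbb R}^{n_{1}}$ contributes $n_{1}$, and $b^{(3)}, c^{(3)}$ contribute $2$, for a total of $n_{1}n_{2} + 2n_{2} + n_{1} + 2$ real parameters, together with the $n_{2}$ sign parameters carried by ${\mathbf c}^{(2)}$. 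This matches the claim. (The word ``at most'' reflects that further collapses may still occur if, say, some breakpoints coincide or some $a_{j}^{(3)}$ vanishes, but these are degenerate cases already flagged by the standing assumption and \cref{lemma1}.)

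The only genuine subtlety — not really an obstacle — is making sure the scaling argument is still legitimate when $c_{j}^{(2)} = 0$: there one must not divide by $c_{j}^{(2)}$, and the clean way to phrase it is that ${\mathbf D}$ is chosen with $d_{j} = \max\{|c_{j}^{(2)}|, 1\}$ or, more transparently, one treats the index sets $\{j : c_{j}^{(2)} \neq 0\}$ and $\{j : c_{j}^{(2)} = 0\}$ separately and takes $d_{j} = 1$ on the latter. One should also note that this particular scaling acts only on the second layer and does not disturb the first-layer normalization $t{\mathbf 1} - {\mathbf x}$, so the two reductions are compatible and can be performed in sequence. Everything else is bookkeeping.
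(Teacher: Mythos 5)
Your argument is correct and coincides with the paper's own proof: both start from the normalized representation \eqref{twolalt1} obtained in the proof of \cref{theo1} and then apply the positive-scaling property with ${\mathbf D}=\diag(d_j)$, $d_j=|c_j^{(2)}|$ for $c_j^{(2)}\neq 0$ and $d_j=1$ otherwise, absorbing ${\mathbf D}$ into ${\mathbf A}^{(3)}$ and ${\mathbf D}^{-1}$ into ${\mathbf A}^{(2)},{\mathbf b}^{(2)}$, with the same parameter count. No gaps.
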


\begin{proof}
The representation of $f_{3}$ with ${\mathbf c}^{(2)} \in {\mathbb R}^{n_{2}}$ follows already from the proof of Theorem \ref{theo1}. Now, we apply the positive scaling property in Remark \ref{rem11} with ${\mathbf D}= \diag(d_{j})_{j=1}^{n_{2}}$
 with entries $d_{j} = |{c}_{j}^{(2)}|$ for $|{c}_{j}^{(2)}| >0$ and $d_{j} = 1$ for ${c}_{j}^{(2)}=0$, and obtain 
\begin{align*}
 f_{3}(t)  &= ({c}^{(3)}t + {b}^{(3)}) + {\mathbf A}^{(3)} \, \sigma \Big( {\mathbf A}^{(2)}  \sigma ({\mathbf 1} t - {\mathbf x}) + {\mathbf D} \, \sign({\mathbf c}^{(2)})t + {\mathbf b}^{(2)} \Big) \\
 &=  (c^{(3)}t + b^{(3)}) + {\mathbf A}^{(3)} \, \sigma \Big( {\mathbf D} \Big( {\mathbf D}^{-1} {\mathbf A}^{(2)}  \sigma ({\mathbf 1} t - {\mathbf x}) +  \sign({\mathbf c}^{(2)})t + {\mathbf D}^{-1}{\mathbf b}^{(2)} \Big) \Big) \\
 &= (c^{(3)}t + b^{(3)}) + {\mathbf A}^{(3)} {\mathbf D} \, \sigma   \Big( {\mathbf D}^{-1} {\mathbf A}^{(2)}  \sigma ({\mathbf 1} t - {\mathbf x}) +  \sign({\mathbf c}^{(2)})t + {\mathbf D}^{-1}{\mathbf b}^{(2)} \Big) .
\end{align*} 
Thus we find (\ref{twolalt}) if we replace ${\mathbf A}^{(3)} {\mathbf D}$ by ${\mathbf A}^{(3)}$, 
${\mathbf D}^{-1} {\mathbf A}^{(2)}$ by ${\mathbf A}^{(2)}$, ${\mathbf D}^{-1}{\mathbf b}^{(2)}$ by ${\mathbf b}^{(2)}$, and set ${\mathbf c}^{(2)}= \sign {\mathbf c}^{(2)}$. 
Hence, the model ${\mathcal Y}_{n_{1}, n_{2}}$ depends on at most $n_{1}n_{2} + 2n_{2} + n_{1} + 2$ real parameters and one sign vector of length $n_{2}$.
\end{proof}

We will show in Subsection \ref{secred} that  the parameter set determining the model  ${\mathcal Y}_{n_{1},n_{2}}$ in Corollary \ref{corscaling} is a set of  independent parameters. 

\begin{remark}
Theorem \ref{theo1} implies that the set of possible breakpoints of $f_{3}(t) = c^{(3)}t + b^{(3)} + \sum\limits_{j=1}^{n_{2}} a_{j}^{(3)} \, \sigma(f_{2,j}(t))$ in (\ref{f3mitf2}) is composed  of the set of  breakpoints $x_{1}, \ldots , x_{n_{1}}$ of $f_{2,j}$ (first level breakpoints) and the set of zeros of $f_{2,j}$, namely $x_{j,\nu}$, $j=1, \ldots , n_{2}$, in each interval $(x_{\nu}, x_{\nu+1})$, $\nu=0, \ldots, n_{1}$ (second level breakpoints).
\end{remark}

\begin{example}\label{ex1}
We consider the function $f_{3}$ in (\ref{twol}) with $n_1=n_2=3$,
$$ {\mathbf A}^{(1)} = \begin{pmatrix}
1 \\ -1 \\ -1 \end{pmatrix}, \quad {\mathbf b}^{(1)} = \begin{pmatrix} 
-1 \\ 2 \\ 3 \end{pmatrix}),\quad {\mathbf A}^{(2)} = \begin{pmatrix}  -2 & 2 & -3 \\
-1 & 1 & -1.5 \\
1 & -2 & 2.5  \end{pmatrix}, \quad {\mathbf b}^{(2)} = \begin{pmatrix}
4.5 \\ 2.2\\ -3.3 \end{pmatrix},
$$
${\mathbf A}^{(3)}= (1,1,1)$, ${\mathbf c}^{(2)} ={\mathbf 0}$, and  $c^{(3)}= b^{(3)}=0$.
Observe that in this example no source channel is used since ${\mathbf c}^{(2)}$ and $c^{(3)}$ vanish.
We can rewrite $f_{3}$ as in (\ref{twolalt1}) with 
$\tilde{\mathbf A}^{(2)} = {\mathbf A}^{(2)}$, ${\mathbf x} = (1,2,3)^{T}$, $\tilde{\mathbf b}^{(2)} = (-0.5, -0.3, 0.2)^{T}$ and 
$\tilde{\mathbf c}^{(2)} = ( 1, 0.5, -0.5)^{T}$. 
The function $f_{3}$ possesses the maximal number of $n_1n_2+n_1+n_2=9+6=15$ breakpoints
\begin{align*}
x_{1} &= 1, \qquad x_{2} = 2, \qquad x_{3} = 3, \\
x_{1,0} &= 0.5, \qquad x_{1,1} = 1.5, \qquad x_{1,2} = 2.5, \qquad  x_{1,3} = 3.25, \\
x_{2,0} &= 0.6,  \qquad x_{2,1} = 1.4, \qquad x_{2,2} = 2.6, \qquad  x_{2,3} = 3.2, \\
x_{3,0} &= 0.4, \qquad x_{3,1} = 1.6, \qquad x_{3,2} = 2.1\overline{3}, \qquad  x_{3,3} = 4.3. 
\end{align*}
Further, we obtain the parameters $\mu_{j,\nu}$ in (\ref{munu}), 
\begin{align*}
\mu_{1,0} &= 1, \qquad \mu_{1,1} = -1, \qquad \mu_{1,2} = 1, \qquad  \mu_{1,3} = -2, \\
\mu_{2,0} &= 0.5,  \qquad \mu_{2,1} = -0.5, \qquad \mu_{2,2} = 0.5, \qquad  \mu_{2,3} = -1, \\
\mu_{3,0} &= -0.5 \qquad \mu_{3,1} = 0.5, \qquad \mu_{3,2} = -1.5, \qquad  \mu_{3,3} = 1, 
\end{align*}
and  the coefficients in the representation (\ref{mo2})  are of the form
\begin{align*}
\alpha_{1} &= -3, \qquad \alpha_{2} = -2, \qquad \alpha_{3} = -4.5. \\
\alpha_{1,0} &= 1, \qquad \alpha_{1,1} = 1, \qquad \alpha_{1,2} = 1, \qquad  \alpha_{1,3} = 2, \\
\alpha_{2,0} &= 0.5,  \qquad \alpha_{2,1} = 0.5, \qquad \alpha_{2,2} = 0.5, \qquad  \alpha_{2,3} = 1, \\
\alpha_{3,0} &= 0.5, \qquad \alpha_{3,1} = 0.5, \qquad \alpha_{3,2} = 1.5, \qquad  \alpha_{3,3} = 1.
\end{align*}
The spline function $f_{3}$ is illustrated in Figure \ref{fig1}, where the green dots mark the first level knots $x_{k}$, $k=1,2,3$, and the orange stars mark the second level knots $x_{j,\nu}$, $j=1, 2,3$, $\nu=0, 1,2,3$.
\begin{figure}[h]
\begin{center}
	\includegraphics[scale=0.35]{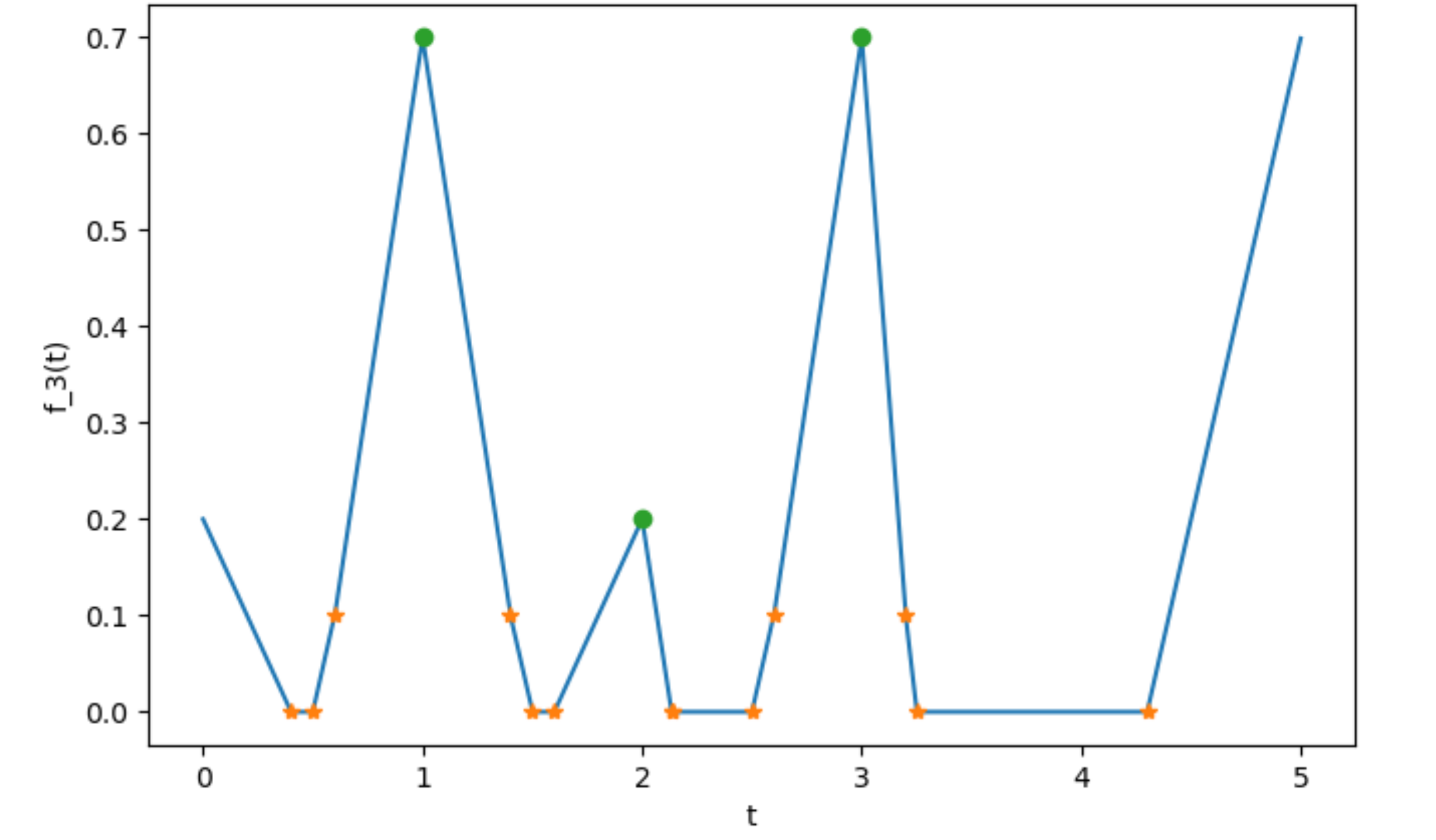}
\end{center}
\caption{Illustration of the function $f_{3}$ in Example \ref{ex1}.}
\label{fig1}
\end{figure}
\end{example}

\begin{remark}
Similarly as in Corollary \ref{corscaling}, we can apply a positive rescaling to obtain a representation of  $f_{3}$ in Example \ref{ex1} with $\tilde{\mathbf c}^{(2)}= \sign (\tilde{\mathbf c}^{(2)})$. Let ${\mathbf D} =  \diag (|\tilde{c}_{j}^{(2)})_{j=1}^{3} = \diag(1, \frac{1}{2},\frac{1}{2})$ and set 
$$ \tilde{\mathbf A}^{(2)} = {\mathbf D}^{-1}
\begin{pmatrix}  -2 & 2 & -3 \\
-1 & 1 & -1.5 \\
1 & -2 & 2.5  \end{pmatrix}, \quad  \tilde{\mathbf b}^{(2)}={\mathbf D}^{-1} \begin{pmatrix} 
-0.5 \\ -0.3\\ 0.2 \end{pmatrix}, \qquad {\mathbf A}^{(3)} = (1,1,1) \, {\mathbf D}^{-1}. $$
Then $f_{3}(t) = {\mathbf A}^{(3)} \sigma\Big( \tilde{\mathbf A}^{(2)} \sigma( t{\mathbf 1} -{\mathbf x}) + t \tilde{\mathbf c}^{(2)} + \tilde{\mathbf b}^{(2)} \Big)$
with a sign vector  $\tilde{\mathbf c}^{(2)} = (1,1,-1)^{T}$.
\end{remark}

\subsection{Two hidden layer ReLU NN with maximal number of breakpoints}
\label{sec:break}

In this subsection, we will investigate the structure of  functions $f_3 \in {\mathcal Y}_{n_1, n_2}$ that possess the maximal number of $(n_{1}+1 )(n_{2} +1) -1$ breakpoints. Moreover, we will give a procedure, how to construct such ReLU DNN. 
These investigations will be also crucial  to determine the number of independent parameters in the model ${\mathcal Y}_{n_1, n_2}$.

Our observations in the proof of Theorem \ref{theo1}  show that 
the maximal number of  breakpoints can only be achieved if all breakpoints $x_{k}$, $k=1, \ldots , n_{1}$, and $x_{j,\nu}$, $j=1, \ldots, n_{2}$, $\nu=0, \ldots , n_{1}$, in the representation of $f_{3}$ in  (\ref{mo2}) are active, i.e., if all coefficients $\alpha_{\ell}$, $\ell=1, \ldots , n_{1}$ and $\alpha_{j,\nu}$, $j=1, \ldots, n_{2}$, $\nu=0, \ldots , n_{1}$, in  (\ref{alphak})--(\ref{alphanuk}) are nonzero.
In this subsection, we will use the model
\begin{align}\label{twol2}
 f_{3}(t) &= (c^{(3)}t + b^{(3)}) + {\mathbf A}^{(3)} \, \sigma \Big( {\mathbf A}^{(2)}  \sigma ( t {\mathbf 1}  - {\mathbf x}) + {\mathbf c}^{(2)}t + {\mathbf b}^{(2)} \Big),\\
 \label{twof2}
 &= (c^{(3)}t + b^{(3)}) + \sum_{j=1}^{n_{2}} a_{j}^{(3)} \, \sigma \Big( f_{2,j}(t) \Big)
\end{align}
with ${\mathbf c}^{(2)} = \sign ({\mathbf c}^{(2)})$, i.e., ${\mathbf c}^{(2)}  \in \{-1,0,1\}^{n_{2}}$
which is equivalent to (\ref{twol}),  as shown in Corollary \ref{corscaling}. Here $f_{2,j}(t)$ is assumed to be in the spline model form 
$$ f_{2,j}(t)= {c}_{j}^{(2)}t + {b}_{j}^{(2)} + \sum_{k=1}^{n_{1}} a_{j,k}^{(2)}  \sigma ( t   - x_{k}), \qquad j=1, \ldots , n_{2}. $$  
Further, we recall that in this notation $f_{2,j}(t)= \mu_{j,\nu} t + \eta_{j,\nu}$ for $t \in (x_{\nu}, x_{\nu+1})$ with 
$\mu_{j,0}=c_{j}^{(2)}$, $\eta_{j,0}= b_{j}^{(2)}$ and 
\begin{equation}\label{munu1}
\mu_{j,\nu} = c_{j}^{(2)} + \sum_{k=1}^{\nu} a_{j,k}^{(2)}, \qquad \eta_{j,\nu}= b_{j}^{(2)} - \sum_{k=1}^{\nu} a_{j,k}^{(2)} x_{k}, \quad j=1, \ldots , n_{2}, \; \nu=1, \ldots , n_{1}. 
\end{equation}

We will show
\begin{theorem}\label{theomax}
Let $ n_1 \ge 1$ and $n_{2} \ge 2$.
Then for the maximal number of $(n_{1}+1)(n_{2}+1)-1$ 
prescribed pairwise distinct  breakpoints on ${\mathbb R}$ there exists a function $f_3 \in {\mathcal Y}_{n_{1},n_{2}}$ 
that possesses these breakpoints and they are all active.
\end{theorem}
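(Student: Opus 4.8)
The plan is to work in the normalized representation $(\ref{twol2})$ guaranteed by Corollary \ref{corscaling}, where ${\mathbf A}^{(1)}={\mathbf 1}$, the first-layer breakpoints are literally the entries of ${\mathbf x}$, and the quantities $\tilde c_j^{(2)},\tilde b_j^{(2)},\tilde a_{j,k}^{(2)}$ of Theorem \ref{theo1} coincide with $c_j^{(2)},b_j^{(2)},a_{j,k}^{(2)}$. Theorem \ref{theo1} tells us exactly which points can be breakpoints of $f_3$: the $n_1$ \emph{first-level} breakpoints $x_1<\dots<x_{n_1}$, and, for each $j\in\{1,\dots,n_2\}$ and each interval $(x_\nu,x_{\nu+1})$ with $x_0=-\infty$, $x_{n_1+1}=\infty$, one \emph{second-level} breakpoint $x_{j,\nu}$ equal to the zero of $f_{2,j}$ on that interval. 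Since there are $n_1+n_2(n_1+1)=(n_1+1)(n_2+1)-1$ of these, the idea is: sort the prescribed points as $y_1<\dots<y_M$ with $M=(n_1+1)(n_2+1)-1$, set $x_k:=y_{k(n_2+1)}$ for $k=1,\dots,n_1$ as the first-level breakpoints, and observe that each open interval $(x_\nu,x_{\nu+1})$ then contains exactly $n_2$ of the remaining $y_m$, which I relabel $z_{1,\nu}<\dots<z_{n_2,\nu}$; the goal is to build the hidden functions $f_{2,j}$ so that $x_{j,\nu}=z_{j,\nu}$.

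To that end I would construct each $f_{2,j}$ as the CPL spline with the common breakpoints $x_1,\dots,x_{n_1}$ that has a single transversal zero in each of the $n_1+1$ intervals, placed at $z_{j,\nu}$. Writing $f_{2,j}(t)=\mu_{j,\nu}(t-z_{j,\nu})$ on $(x_\nu,x_{\nu+1})$, continuity at $x_{\nu+1}$ forces $\mu_{j,\nu+1}=\mu_{j,\nu}\,(x_{\nu+1}-z_{j,\nu})/(x_{\nu+1}-z_{j,\nu+1})$, and since $z_{j,\nu}<x_{\nu+1}<z_{j,\nu+1}$ this ratio is negative; hence, after fixing the initial slope $\mu_{j,0}=:\varepsilon_j\in\{-1,1\}$, all slopes are determined, nonzero, and alternating in sign, so the spline coefficients $a_{j,k}^{(2)}=\mu_{j,k}-\mu_{j,k-1}$ are nonzero, $c_j^{(2)}=\mu_{j,0}$, $b_j^{(2)}=-\mu_{j,0}z_{j,0}$, and ${\mathbf c}^{(2)}=(\varepsilon_j)_{j=1}^{n_2}\in\{-1,1\}^{n_2}$ is already in normalized form. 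By $(\ref{alphanuk})$ every second-level breakpoint $x_{j,\nu}=z_{j,\nu}$ then lies strictly inside $(x_\nu,x_{\nu+1})$, is distinct from the others and from all $x_k$, and becomes active as soon as $a_j^{(3)}\neq 0$.

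The main obstacle, and the place where the hypothesis $n_2\ge 2$ is really needed, is to also make the first-level breakpoints active, i.e.\ $\alpha_k\neq 0$ in $(\ref{alphak})$. In our construction $f_{2,j}(x_k)=\mu_{j,k-1}(x_k-z_{j,k-1})$ is never zero and has the sign of $\mu_{j,k-1}=(-1)^{k-1}\varepsilon_j$, so $(\ref{alphak})$ collapses to $\alpha_k=\sum_{j\in S_k}a_j^{(3)}a_{j,k}^{(2)}$ with $S_k=\{j:(-1)^{k-1}\varepsilon_j>0\}$. Were all $\varepsilon_j$ equal, $S_k$ would be empty for every other $k$, making that breakpoint impossible to activate; instead I would take $\varepsilon_1=+1$ and $\varepsilon_2=-1$ (permitted since $n_2\ge 2$), which makes $S_k\neq\emptyset$ for every $k$. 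Each $\alpha_k$ is then a nontrivial linear form in ${\mathbf A}^{(3)}$, so a choice of ${\mathbf A}^{(3)}$ avoiding the finitely many hyperplanes $\{\alpha_k=0\}$ and $\{a_j^{(3)}=0\}$, together with $c^{(3)}=b^{(3)}=0$, yields $f_3\in{\mathcal Y}_{n_1,n_2}$ whose set of active breakpoints is exactly the prescribed one. I expect the only real care to be needed in tracking these sign/parity conditions and in checking that all $M$ candidate breakpoints are genuinely distinct and active; no single hard estimate seems to be involved.
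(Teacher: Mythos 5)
Your construction is correct, and its core is the same as the paper's (Theorem \ref{theomax1}): interlace the prescribed points so that $n_2$ of them fall in each interval determined by the first-level knots, then determine the hidden-layer slopes by the ratio recursion $\mu_{j,\nu+1}=\mu_{j,\nu}(x_{\nu+1}-z_{j,\nu})/(x_{\nu+1}-z_{j,\nu+1})$, which fixes ${\mathbf A}^{(2)}$ and ${\mathbf b}^{(2)}$ exactly as in Lemma \ref{lem2} and Corollary \ref{lem3}. Where you diverge is the final activation step for the first-level knots. The paper takes the fully alternating choice $c_j^{(2)}=(-1)^{j+1}$ and restricts $\sign(a_j^{(3)})=\pm(-1)^j$, so that in the formula for $\alpha_\ell$ only indices $j$ with $j+\ell$ even contribute and all contributing terms have the same sign; this gives explicit closed-form nonzero coefficients (which the paper reuses later, e.g.\ in Theorem \ref{theo4.2} and the independence results). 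You instead require only two opposite signs among the $\varepsilon_j$ (so each set $S_k=\{j:(-1)^{k-1}\varepsilon_j>0\}$ is nonempty, which is exactly the necessary sign-change condition of Corollary \ref{cor3}) and then choose ${\mathbf A}^{(3)}$ generically, avoiding the finitely many hyperplanes $\{\alpha_k=0\}$ and $\{a_j^{(3)}=0\}$; since $a_{j,k}^{(2)}\neq 0$ for $j\in S_k$, each form is nontrivial and such a choice exists. This genericity argument is shorter and exhibits more freedom in the admissible sign patterns of ${\mathbf c}^{(2)}$ and ${\mathbf A}^{(3)}$, at the cost of losing the explicit coefficient formulas; two small points worth stating explicitly if you write it up are the choice of $\varepsilon_j$ for $j\ge 3$ (any fixed signs work) and the observation that all candidate knots are pairwise distinct, so no cancellation between coefficients attached to coinciding knots can occur.
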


To prove Theorem \ref{theomax}, we will derive a procedure to construct  $f_3 \in {\mathcal Y}_{n_{1},n_{2}}$  with $(n_1+1)(n_2+1)-1$ prescribed breakpoints.
We start with considering  in more detail, how the parameter sets $\{ {\mathbf A}^{(3)}, \, {\mathbf A}^{(2)}, {\mathbf x}, \, {\mathbf c}^{(2)}, \, {\mathbf b}^{(2)}\}$ 
determining $f_{3}$ in (\ref{twol2}) (up to a linear polynomial) and 
\begin{equation}\label{splinepar}
\{ (x_{k})_{k=1}^{n_{1}}, (x_{j,\nu})_{j=1,\nu=0}^{n_{2}, n_{1}}, \, ({\alpha}_{\ell})_{\ell=1}^{n_{1}}, \, (\alpha_{j,\nu})_{j=1,\nu=0}^{n_{2},n_{1}} \}
\end{equation}
determining the CPL spline representation of ${f}_{3}$ in (\ref{mo2}) (up to a linear polynomial) are related and which redundancies appear.

The next lemma shows, how the slopes $\partial_{+}f_{2,j}(x_{\nu}) =\mu_{j,\nu}$ of $f_{2,j}$ in $(x_{\nu}, x_{\nu+1})$ depend on the breakpoints of $f_{3}$. 

\begin{lemma}\label{lem2}
For a given function $f_3 \in {\mathcal Y}_{n_{1},n_{2}}$ in $(\ref{twof2})$ with $n_{1}, n_{2} \ge 1$  let $\mu_{j,\ell}, \, \eta_{j,\ell}$ be given as in $(\ref{munu1})$. Assume that   
$x_{j,\ell} := - \frac{\eta_{j,\ell}}{\mu_{j,\ell}}$ are well-defined, i.e., $\mu_{j,\ell} \neq 0$, for $j=1, \ldots , n_2, \, \ell=0, \ldots , n_1$. Then 
\begin{equation}\label{murec} \textstyle \mu_{j,\nu} = \mu_{j,\nu-1} \, \left( \frac{x_{\nu} - x_{j,\nu-1}}{x_{\nu}- x_{j,\nu}} \right)
\end{equation}
for all $j=1, \ldots , n_2, \, \nu=1, \ldots , n_1$ with $x_\nu \neq x_{j,\nu}$. 
If additionally $x_{j,\nu} \in (x_{\nu}, x_{\nu+1})$ for $j=1, \ldots , n_2, \, \nu=0, \ldots , n_1$ (with $x_{0}:=-\infty$ and $x_{n_{1}+1} := \infty$), then
\begin{equation}\label{rec1}
\mu_{j,0} = c_{j}^{(2)}, \qquad  \mu_{j,\nu} =  \textstyle {c}_{j}^{(2)} \, \prod\limits_{\ell=1}^{\nu} \left( \frac{x_{\ell} - x_{j,\ell-1}}{x_{\ell} - x_{j,\ell}} \right), \qquad {j}=1, \ldots, n_{2}, \, \nu=1, \ldots , n_{1}.
 \end{equation}
In particular, $c_{j}^{(2)}= \sign(c_{j}^{(2)}) \neq 0$ for $j=1, \ldots , n_{2}$, and  $\sign(\mu_{j,\nu}) = (-1)^{\nu} \sign({c}_{j}^{(2)})$, i.e., the functions $f_{2,j}$ in $(\ref{f2form})$ have alternating slopes.
\end{lemma}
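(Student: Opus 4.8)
The plan is to first establish the recursion \eqref{murec} directly from the definition of the breakpoints $x_{j,\nu} = -\eta_{j,\nu}/\mu_{j,\nu}$, and then iterate it to obtain the closed-form product formula \eqref{rec1}, checking the sign claims as a byproduct. The starting point is the observation in the proof of Lemma~\ref{lemsigma} that $f_{2,j}$ is linear on each interval $(x_\nu, x_{\nu+1})$ with $f_{2,j}(t) = \mu_{j,\nu} t + \eta_{j,\nu}$, and that $\mu_{j,\nu} = \mu_{j,\nu-1} + a_{j,\nu}^{(2)}$, $\eta_{j,\nu} = \eta_{j,\nu-1} - a_{j,\nu}^{(2)} x_\nu$; in particular $f_{2,j}$ is continuous at $x_\nu$, so $\mu_{j,\nu-1} x_\nu + \eta_{j,\nu-1} = \mu_{j,\nu} x_\nu + \eta_{j,\nu}$.

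First I would use $\eta_{j,\ell} = -\mu_{j,\ell} x_{j,\ell}$ (valid since $\mu_{j,\ell}\neq 0$) to rewrite the continuity identity at $x_\nu$ as $\mu_{j,\nu-1}(x_\nu - x_{j,\nu-1}) = \mu_{j,\nu}(x_\nu - x_{j,\nu})$. Under the hypothesis $x_\nu \neq x_{j,\nu}$ one divides to get \eqref{murec}. Then, under the stronger hypothesis $x_{j,\nu} \in (x_\nu, x_{\nu+1})$ for all $\nu$, I would iterate \eqref{murec} from $\nu$ down to $1$, using $\mu_{j,0} = c_j^{(2)}$, to obtain the telescoping product \eqref{rec1}; since $x_{j,\nu-1} \in (x_{\nu-1}, x_\nu)$ and $x_{j,\nu} \in (x_\nu, x_{\nu+1})$, both $x_\ell - x_{j,\ell-1}$ and $x_\ell - x_{j,\ell}$ are nonzero (the first positive, the second negative), so each factor is well-defined and strictly negative. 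Hence $\mu_{j,\nu}$ is nonzero whenever $c_j^{(2)}\neq 0$, and $\sign(\mu_{j,\nu}) = (-1)^\nu \sign(c_j^{(2)})$, which gives the alternating-slope statement directly.

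The one point that needs a short argument rather than pure computation is why $c_j^{(2)} = \sign(c_j^{(2)}) \neq 0$: this is not automatic from \eqref{rec1} alone but follows because we are in the normalized model \eqref{twof2} where ${\mathbf c}^{(2)} = \sign({\mathbf c}^{(2)})$, so $c_j^{(2)} \in \{-1,0,1\}$; and if $c_j^{(2)} = 0$ then $\mu_{j,0} = 0$, contradicting the standing assumption that all $\mu_{j,\ell}$ are nonzero. I expect the only mild obstacle to be bookkeeping with the endpoint conventions $x_0 = -\infty$, $x_{n_1+1} = \infty$: for $\nu = 1$ the interval $(x_0, x_1)$ is $(-\infty, x_1)$ and $x_{j,0} \in (-\infty, x_1)$ simply means $x_{j,0} < x_1$, so $x_1 - x_{j,0} > 0$ still holds; similarly the case $\nu = n_1$ uses $x_{j,n_1} \in (x_{n_1}, \infty)$, i.e. $x_{n_1} - x_{j,n_1} < 0$. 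Once these are read correctly, the sign alternation and the product formula fall out with no further work.
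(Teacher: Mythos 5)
Your proposal is correct and follows essentially the same route as the paper: continuity of $f_{2,j}$ at $x_{\nu}$ combined with $\eta_{j,\ell}=-\mu_{j,\ell}x_{j,\ell}$ gives $\mu_{j,\nu}(x_{\nu}-x_{j,\nu})=\mu_{j,\nu-1}(x_{\nu}-x_{j,\nu-1})$, division yields the recursion, and iteration with the sign observation $x_{\ell}-x_{j,\ell-1}>0$, $x_{\ell}-x_{j,\ell}<0$ gives the product formula and the alternating slopes. Your explicit remark on why $c_{j}^{(2)}=\sign(c_{j}^{(2)})\neq 0$ (normalization in the model plus $\mu_{j,0}\neq 0$) is a small but welcome addition that the paper leaves implicit.
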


\begin{proof}
Observe from (\ref{munu1}) and the continuity of $f_{2,j}$ that 
$$ f_{2,j}(x_{\nu}) = \mu_{j,\nu} x_{\nu} + \eta_{j,\nu} = \mu_{j,\nu-1}x_{\nu} + \eta_{j,\nu-1}, \qquad j=1, \ldots, n_{2}, \, \nu=1, \ldots , n_{1}. $$
Using that $x_{j,\nu} = -\frac{\eta_{j,\nu}}{\mu_{j,\nu}}$, we obtain
$$\mu_{j,\nu} x_{\nu} - \mu_{j,\nu} x_{j,\nu} = \mu_{j,\nu-1} x_{\nu} - \mu_{j,\nu-1} x_{j, \nu-1}. $$
For $x_{\nu} \neq x_{j,\nu}$ we conclude the recursion (\ref{murec}).
For  $x_{j,\nu} \in (x_{\nu}, x_{\nu+1})$ we have $x_{\nu}- x_{j,\nu} < 0$, $x_{\nu} - x_{j, \nu-1} >0$.  We can therefore apply the recursion iteratively, where each term $\left( \frac{x_{\ell} - x_{j,\ell-1}}{x_{\ell} - x_{j,\ell}} \right)$ in the product (\ref{rec1}) ist negative.
This yields the assertion, since $\mu_{j,0} = {c}_{j}^{(2)}$ by (\ref{munu1}).
\end{proof}

\begin{remark}
The function $f_{3}$ can only possess the maximal number of breakpoints, if $f_{2,j} (x_{\nu})  \neq 0$ with alternating sign change, i.e., $\sign(f_{2,j}(x_{\nu})) = (-1)^{\nu-1}\sign(f_{2,j}(x_{1}))$ for $\nu=2, \ldots, n_{1}$.
Then, the slopes $\mu_{j,\nu}$ of $f_{2,j}$ in $(x_{\nu},x_{\nu+1})$ also change their sign, such that the second layer breakpoints $x_{j,\nu}$ satisfy $f_{2,j}(x_{j,\nu}) = \mu_{j,\nu} x_{j,\nu} + \eta_{j,\nu} =0$.
\end{remark}

\begin{corollary} \label{lem3}
For a given function $f_3 \in {\mathcal Y}_{n_{1},n_{2}}$ in $(\ref{twof2})$  with maximal number of $(n_{1}+1)(n_{2}+1
)-1$ breakpoints, $x_{1}< x_{2} < \ldots  < x_{n_{1}}$ and $x_{j,\nu} \in (x_{\nu}, \, x_{\nu+1})$ (with the notation $x_{0}:= -\infty$ and $x_{n_{1}+1} := \infty$), the parameter matrix ${\mathbf A}^{(2)} = ({a}_{j,k}^{{(2)}})_{j,k=1}^{n_{2}, n_{1}}$ is already determined  by
\begin{equation}\label{ajk2} \textstyle {a}_{j,k}^{{(2)}} = \mu_{j,k} - \mu_{j,k-1} = {c}_{j}^{(2)} \left( \frac{x_{j,k}-x_{j,k-1}}{x_{k} - x_{j,k}} \right) \prod\limits_{\ell=1}^{{k-1}} \left(\frac{x_{\ell} - x_{j,\ell-1}}{x_{\ell} - x_{j,\ell}} \right). 
\end{equation}
In particular, all components $a_{j,k}^{(2)}$ do not vanish and have alternating sign,
$$ \sign({a}_{j,k}^{(2)}) = \sign(\mu_{j,k}) = (-1)^{k} {c}_{j}^{(2)}, \qquad {j=1, \ldots, n_{2}, \, k=1, \ldots , n_{1}.}$$

\end{corollary}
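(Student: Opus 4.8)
The plan is to derive \eqref{ajk2} directly from the recursion for $\mu_{j,\nu}$ established in Lemma \ref{lem2}, using the observation that the coefficients $a_{j,k}^{(2)}$ in the spline representation of $f_{2,j}$ are exactly the jumps of its slope. First I would recall from \eqref{munu1} that $a_{j,k}^{(2)} = \mu_{j,k} - \mu_{j,k-1}$, which is just the statement that $\mu_{j,k} = c_j^{(2)} + \sum_{\ell=1}^k a_{j,\ell}^{(2)}$ telescopes; this is the first equality in \eqref{ajk2}. Since $f_3$ has the maximal number of breakpoints, all hypotheses of Lemma \ref{lem2} are satisfied: in particular $x_{j,\nu}\in(x_\nu,x_{\nu+1})$ for every $j,\nu$, hence $\mu_{j,\nu}\neq 0$ and the closed-form product \eqref{rec1} holds.

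Next I would substitute \eqref{rec1} into $a_{j,k}^{(2)}=\mu_{j,k}-\mu_{j,k-1}$. Writing $P_{j,k-1} := \prod_{\ell=1}^{k-1}\frac{x_\ell - x_{j,\ell-1}}{x_\ell - x_{j,\ell}}$, we have $\mu_{j,k} = c_j^{(2)} P_{j,k-1}\cdot\frac{x_k - x_{j,k-1}}{x_k - x_{j,k}}$ and $\mu_{j,k-1} = c_j^{(2)} P_{j,k-1}$, so
\begin{align*}
a_{j,k}^{(2)} = c_j^{(2)} P_{j,k-1}\left(\frac{x_k - x_{j,k-1}}{x_k - x_{j,k}} - 1\right) = c_j^{(2)} P_{j,k-1}\,\frac{x_{j,k} - x_{j,k-1}}{x_k - x_{j,k}},
\end{align*}
which is precisely \eqref{ajk2}. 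The case $k=1$ uses the convention that the empty product $P_{j,0}$ equals $1$ and that $\mu_{j,0}=c_j^{(2)}$, consistent with $x_{j,0}\in(-\infty,x_1)$.

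For the sign statement I would argue as follows. By Lemma \ref{lem2}, $\operatorname{sign}(\mu_{j,\nu}) = (-1)^\nu\operatorname{sign}(c_j^{(2)})$, and since $c_j^{(2)}=\operatorname{sign}(c_j^{(2)})\in\{-1,1\}$ after normalization, $\mu_{j,k}$ and $\mu_{j,k-1}$ have opposite signs; hence $a_{j,k}^{(2)}=\mu_{j,k}-\mu_{j,k-1}$ is nonzero and $\operatorname{sign}(a_{j,k}^{(2)})=\operatorname{sign}(\mu_{j,k})=(-1)^k c_j^{(2)}$. Alternatively one reads the sign straight off the product form: each factor $\frac{x_\ell-x_{j,\ell-1}}{x_\ell-x_{j,\ell}}$ is negative because $x_{j,\ell-1}<x_\ell<x_{j,\ell}$, and the extra factor $\frac{x_{j,k}-x_{j,k-1}}{x_k-x_{j,k}}$ is also negative since $x_{j,k-1}<x_{j,k}$ (they lie in consecutive intervals, as $x_{j,k-1}<x_k<x_{j,k}$) while $x_k<x_{j,k}$; so $a_{j,k}^{(2)}$ carries the sign of $c_j^{(2)}\cdot(-1)^k$.

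I do not expect a genuine obstacle here: the corollary is essentially bookkeeping on top of Lemma \ref{lem2}, and the only points requiring a little care are the boundary case $k=1$ (empty product, $x_{j,0}=-\infty$) and keeping the strict inequalities $x_{j,k-1}<x_k<x_{j,k}$ straight so that the sign of each factor is unambiguous. The mild subtlety worth a sentence is why the maximal-breakpoint hypothesis forces $x_{j,\nu}\in(x_\nu,x_{\nu+1})$ for \emph{all} $j,\nu$ rather than merely for the active ones — but this is exactly what Theorem \ref{theo1} together with the remark preceding the statement already gives, since a breakpoint $x_{j,\nu}$ outside $(x_\nu,x_{\nu+1})$ would be inactive and the count $(n_1+1)(n_2+1)-1$ could not be reached.
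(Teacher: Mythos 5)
Your proposal is correct and follows essentially the same route as the paper: the paper's proof likewise obtains \eqref{ajk2} directly from \eqref{munu1} together with the product formula \eqref{rec1} of Lemma \ref{lem2}, and deduces the alternating sign from the fact that all $k$ factors in the resulting representation of $a_{j,k}^{(2)}$ are negative. Your explicit telescoping computation and the alternative sign argument via $\sign(\mu_{j,k})=(-1)^k c_j^{(2)}$ are just more detailed versions of the same argument.
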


\begin{proof} The representation for ${a}_{j,k}^{{(2)}}$ follows directly from (\ref{munu1}) and Lemma \ref{lem2}.
The alternating sign change follows since the $k$ factors in the representation of ${a}_{j,k}^{(2)}$ are all negative. 
\end{proof}

\begin{corollary}\label{cor3}
Let a given function $f_3 \in {\mathcal Y}_{n_{1},n_{2}}$ in $(\ref{twol2})$ (with ${\mathbf c}^{(2)} = \sign({\mathbf c}^{(2)}) \in \{-1,1\}^{n_{2}}$ ) be represented in the form $(\ref{mo2})$ and 
assume that $f_{3}$ possesses the maximal number of $(n_{1}+1)(n_{2}+1)-1$ active breakpoints $(x_{k})_{k=1}^{n_{1}}$ and $(x_{j,\nu})_{j=1,\nu=0}^{n_{2}, n_{1}}$. Then the coefficients of $f_3$ in the representation $(\ref{mo2})$ satisfy $\alpha_{j,0} = a_{j}^{(3)}$, 
\begin{align*}
\alpha_{j,\nu} &=  \textstyle a_{j}^{(3)}   \prod\limits_{\ell=1}^{\nu} \left| \frac{x_{\ell} - x_{j,\ell-1}}{x_{\ell} - x_{j,\ell}} \right|, \quad j=1, \ldots , n_{2}, \, \nu=1, \ldots , n_{1},\\
\alpha_{\ell} &= \textstyle \frac{1}{2} \sum\limits_{j=1}^{{n_2}} a_{j}^{(3)}  \left( \frac{x_{j,\ell}-x_{j,\ell-1}}{x_{\ell} - x_{j,\ell}} \right) \prod\limits_{k=1}^{{\ell-1}} \left(\frac{x_{k} - x_{j,k-1}}{x_{k} - x_{j,k}} \right) \, \big( (-1)^{\ell+1} + \sign({c}_{j}^{(2)}) \big),
\end{align*}
i.e., the  coefficients $\alpha_{j,\nu}$ and $\alpha_{\ell}$ are already determined by the knot sets $(x_{j,\nu})_{j=1,\nu=0}^{n_{2}, n_{1}}$, ${\mathbf x} = (x_{\ell})_{\ell=1}^{n_{1}}$, and the parameter vector ${\mathbf a}^{(3)} =(a_{j}^{(3)})_{j=1}^{n_{2}}$ has only nonzero components, and  ${\mathbf c}^{(2)} = ({c}_{j}^{(2)})_{j=1}^{n_{2}} \in \{-1,1\}^{n_{2}}$ contains changing signs for $n_{1}>1$, i.e., $\sign({c}_{j_{1}}^{(2)}) \neq \sign({c}_{j_{2}}^{(2)})$ for some $j_{1} \ne j_{2}$.
\end{corollary}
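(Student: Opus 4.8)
The plan is to read the coefficients off directly from the general formulas (\ref{alphak})--(\ref{alphanuk}) established in the proof of Theorem~\ref{theo1}, specialised to the normalised model (\ref{twol2}) (so that $\tilde a^{(2)}_{j,k}=a^{(2)}_{j,k}$, $\tilde c^{(2)}_j=c^{(2)}_j$, $\tilde b^{(2)}_j=b^{(2)}_j$), and to feed into them the slope recursion of Lemma~\ref{lem2} together with the explicit shape of $\mathbf{A}^{(2)}$ from Corollary~\ref{lem3}. The first step is to spell out what maximality forces: since all $(n_1+1)(n_2+1)-1$ breakpoints are active, every second-level breakpoint lies strictly inside its interval, $x_{j,\nu}\in(x_\nu,x_{\nu+1})$, hence $\mu_{j,\nu}\neq 0$, and because $x_{j,\ell-1}<x_\ell$ we get $f_{2,j}(x_\ell)=\mu_{j,\ell-1}(x_\ell-x_{j,\ell-1})\neq 0$ with $\sign(f_{2,j}(x_\ell))=\sign(\mu_{j,\ell-1})$. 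Lemma~\ref{lem2} then applies and gives $c^{(2)}_j=\sign(c^{(2)}_j)\in\{-1,1\}$, $\mu_{j,\nu}=c^{(2)}_j\prod_{\ell=1}^{\nu}\frac{x_\ell-x_{j,\ell-1}}{x_\ell-x_{j,\ell}}$, and $\sign(\mu_{j,\nu})=(-1)^{\nu}\sign(c^{(2)}_j)$.

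The coefficients $\alpha_{j,\nu}$ are immediate: in (\ref{alphanuk}) the indicator equals $1$, so $\alpha_{j,\nu}=a^{(3)}_j|\mu_{j,\nu}|$; using $|c^{(2)}_j|=1$ this yields $\alpha_{j,0}=a^{(3)}_j$ and $\alpha_{j,\nu}=a^{(3)}_j\prod_{\ell=1}^{\nu}\bigl|\tfrac{x_\ell-x_{j,\ell-1}}{x_\ell-x_{j,\ell}}\bigr|$ for $\nu\geq 1$. For $\alpha_\ell$ the $\chi_{\{0\}}(f_{2,j}(x_\ell))$-term in (\ref{alphak}) vanishes because $f_{2,j}(x_\ell)\neq 0$, leaving $\alpha_\ell=\sum_{j=1}^{n_2}a^{(3)}_j a^{(2)}_{j,\ell}\,\chi_{(0,\infty)}(f_{2,j}(x_\ell))$. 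The decisive algebraic step is to replace the indicator by a closed form: from $\sign(f_{2,j}(x_\ell))=(-1)^{\ell-1}\sign(c^{(2)}_j)$ one gets $\chi_{(0,\infty)}(f_{2,j}(x_\ell))=\tfrac12\bigl(1+(-1)^{\ell+1}\sign(c^{(2)}_j)\bigr)$; inserting (\ref{ajk2}) for $a^{(2)}_{j,\ell}$ and collapsing $c^{(2)}_j\cdot\tfrac12\bigl(1+(-1)^{\ell+1}\sign(c^{(2)}_j)\bigr)=\tfrac12\bigl(\sign(c^{(2)}_j)+(-1)^{\ell+1}\bigr)$ via $(c^{(2)}_j)^2=1$ reproduces exactly the claimed expression for $\alpha_\ell$.

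It remains to read off the structural consequences. The formulas just derived show that $\alpha_{j,\nu}$ and $\alpha_\ell$ are functions of the knot sets $(x_\ell)$, $(x_{j,\nu})$, the vector $\mathbf{a}^{(3)}$ and the sign pattern $\mathbf{c}^{(2)}$ only; each component $a^{(3)}_j=\alpha_{j,0}$ is nonzero because $x_{j,0}$ is an active breakpoint; and if $\sign(c^{(2)}_j)$ were the same value $s\in\{-1,1\}$ for all $j$, then for any $\ell\in\{1,\dots,n_1\}$ with $(-1)^{\ell+1}=-s$ every summand of $\alpha_\ell$ would vanish. For $n_1>1$ such an $\ell$ exists (take $\ell=2$ if $s=1$ and $\ell=1$ if $s=-1$), contradicting that $x_\ell$ is active; hence $\mathbf{c}^{(2)}$ must contain both signs when $n_1>1$.

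The argument carries no real difficulty and is essentially careful bookkeeping. The two places that need attention are (i) the strict inclusions $x_{j,\nu}\in(x_\nu,x_{\nu+1})$ forced by activeness, which guarantee $\mu_{j,\nu}\neq 0$ and $f_{2,j}(x_\ell)\neq 0$ so that precisely the right terms survive in (\ref{alphak})--(\ref{alphanuk}), and (ii) the sign manipulation that merges the factor $c^{(2)}_j$ with the indicator into the symmetric term $(-1)^{\ell+1}+\sign(c^{(2)}_j)$.
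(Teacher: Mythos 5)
Your proposal is correct and follows essentially the same route as the paper's proof: it reads the coefficients off (\ref{alphak})--(\ref{alphanuk}), uses Lemma \ref{lem2} and (\ref{ajk2}) for the slopes, converts the indicator $\chi_{(0,\infty)}(f_{2,j}(x_{\ell}))$ into $\tfrac12\bigl(1+(-1)^{\ell+1}\sign(c_{j}^{(2)})\bigr)$, and derives the sign-change requirement by the same degeneration argument. The only cosmetic difference is that you evaluate $\sign(f_{2,j}(x_{\ell}))$ from the left interval via $\mu_{j,\ell-1}(x_{\ell}-x_{j,\ell-1})$ while the paper uses $\mu_{j,\ell}(x_{\ell}-x_{j,\ell})$, which yields the same conclusion.
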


\begin{proof}
Formula (\ref{alphanuk}) implies $\alpha_{j,\nu}= a_{j}^{(3)} |\mu_{j,\nu}| \neq 0$ and we use (\ref{rec1}) to obtain the equations for $\alpha_{j,0}$ and $\alpha_{j,\nu}$, $j=1, \ldots , n_{2}, \, \nu=1, \ldots , n_{1}$.

\noindent
To show the relation for $\alpha_{\ell}$, we observe by  Lemma \ref{lem2} and Corollary \ref{lem3} with $x_{j,\ell}=-\frac{\eta_{j,\ell}}{\mu_{j,\ell}} \in (x_{\ell}, x_{\ell+1})$ that
\begin{align*}
\sign(f_{2,j} (x_{\ell})) &= \sign(\mu_{j,\ell} x_{\ell} + \eta_{j,\ell}) = \sign(\mu_{j,\ell} x_{\ell} - \mu_{j,\ell} x_{j,\ell}) 
= \sign(\mu_{j,\ell}) \sign(x_{\ell} - x_{j,\ell}) \\
 &= - \sign(\mu_{j,\ell}) = (-1)^{\ell+1} \sign(\mu_{j,0}) = (-1)^{\ell+1} \sign({c}_{j}^{(2)}) \neq 0,
\end{align*}
i.e., $\chi_{(0,\infty)}(f_{2,j} (x_{\ell})) = \frac{1}{2} \big( (-1)^{\ell+1} \sign({c}_{j}^{(2)}) +1 \big)$. Thus, we obtain by (\ref{alphak}) and (\ref{ajk2}) that 
\begin{align*}
\alpha_{\ell} &= \textstyle \sum\limits_{j=1}^{{n_2} } a_{j}^{(3)} {a}^{(2)}_{j,\ell}  \, \chi_{(0,\infty)}(f_{2,j} (x_{\ell})) \\
&= \textstyle \frac{1}{2} \sum\limits_{j=1}^{{n_2}} a_{j}^{(3)} \, |{c}_{j}^{(2)}| \left( \frac{x_{j,\ell}-x_{j,\ell-1}}{x_{\ell} - x_{j,\ell}} \right) \prod\limits_{k=1}^{{\ell-1}} \left(\frac{x_{k} - x_{j,k-1}}{x_{k} - x_{j,k}} \right) \, \big((-1)^{\ell+1}+\sign({c}_{j}^{(2)}) \big).
\end{align*}
In particular, if ${c}_{j}^{(2)}$ had the same sign for all $j=1, \ldots , n_{2}$, then we would find either $\alpha_{2\ell}=0$, if $\sign({c}_{j}^{(2)}) =1$ for all $j$, or  $\alpha_{2\ell-1}=0$, if $\sign({c}_{j}^{(2)}) =-1$ for all $j$. In both cases the first sum in (\ref{mo2}) would degenerate. Therefore, for $n_{1}>1$ the maximal number of breakpoints of $f_{3}$ can only by achieved, if there exist $j_{1}, j_{2}$ with $\sign({c}_{j_{1}}^{(2)}) \neq \sign({c}_{j_{2}}^{(2)})$. In particular, for $n_{1}>1$, we need to have $n_{2}\ge 2$ to achieve the maximal number of breakpoints.
\end{proof}

With these preliminaries, we are now ready to prove the main theorem of this section that covers the assertions of Theorem \ref{theomax}.
\begin{theorem}\label{theomax1}
For integers $ n_1 \ge 1$ and $n_{2} \ge 2$ let $(n_1+1)(n_2+1)-1$ real pairwise distinct knots be given, which are ordered  such that 
$$  -\infty< x_{1} < x_{2} < \ldots < x_{n_{1}} < \infty  $$
and
$$ x_{j,0} \in (-\infty , x_{1}), \; x_{j,n_{1}}  \in (x_{n_{1}}, \infty), \quad x_{j,\nu} \in (x_{\nu}, x_{\nu+1}), \quad \nu=1, \ldots , n_{1}-1, \; j=1, \ldots , n_{2}. $$
Then $f_{3} \in {\mathcal Y}_{n_{1},n_{2}}$ 
in $(\ref{twol2})$  determined by  
\begin{align*} 
{\mathbf x} &:= (x_{k})_{k=1}^{n_{1}}, \qquad {\mathbf c}^{(2)} := ((-1)^{j+1})_{j=1}^{n_{2}}, \qquad {\mathbf b}^{(2)} := (- x_{j,0}\, {c}_{j}^{(2)})_{j=1}^{n_{2}} = (x_{j,0}\, (-1)^{j})_{j=1}^{n_{2}}, \\
\mu_{j,0} &:= c_{j}^{(2)} = (-1)^{j+1}, \quad \mu_{j,\nu} := \textstyle (-1)^{j+1} \, \prod\limits_{\ell=1}^{\nu} \left( \frac{x_{\ell} - x_{j,\ell-1}}{x_{\ell} - x_{j,\ell}} \right), \quad {j}=1, \ldots, n_{2}, \, \nu=1, \ldots , n_{1}, \\
{\mathbf A}^{(2)} &:= ({a}_{j,k}^{(2)})_{j,k=1}^{n_{2}, n_{1}} \quad \text{with} \quad {a}_{j,k}^{{(2)}} = \mu_{j,k} - \mu_{j,k-1}, \\
{\mathbf A}^{(3)} &:=((a_{j}^{(3)})_{j=1}^{n_{2}})^{T}  \in {\mathbb R}^{1 \times n_{2}} \quad \text{with} \quad  \sign({\mathbf A}^{(3)}) = ((\pm (-1)^{j})_{j=1}^{n_{2}})^{T}, \, c^{(3)}, \, b^{(3)} \in {\mathbb R},
\end{align*}
possesses the $(n_{1}+1)(n_{2}+1)-1$ prescribed knots as active  breakpoints. More precisely, $f_{3}$ is of the form
\begin{equation}\label{f3spline} f_{3}(t) = {q}_{1}t + q_{0} + \sum_{\ell=1}^{n_{1}} \alpha_{\ell} \, \sigma(t-x_{\ell}) + \sum_{j=1}^{n_{2}}\sum_{\nu=0}^{n_{1}} \alpha_{j,\nu} \, \sigma(t-x_{j,\nu})
\end{equation}
with $\alpha_{j,0} = a_{j}^{(3)} \neq 0$, $j=1, \ldots, n_{2}$, 
\begin{align*} \alpha_{j,\nu} &=  \textstyle a_{j}^{(3)} \prod\limits_{\ell=1}^{\nu} \left| \frac{x_{\ell} - x_{j,\ell-1}}{x_{\ell} - x_{j,\ell}} \right| \neq 0, \quad j=1, \ldots , n_{2}, \, \nu=1, \ldots , n_{1}, \\
\alpha_{\ell} &= \textstyle (-1)^{\ell+1} \textstyle\sum\limits_{\substack{j=1\\ j+\ell \, \text{even}}}^{n_{2}} a_{j}^{(3)} \left( \frac{x_{j,\ell}-x_{j,\ell-1}}{x_{\ell} - x_{j,\ell}} \right) \prod\limits_{k=1}^{{\ell-1}} \left(\frac{x_{k} - x_{j,k-1}}{x_{k} - x_{j,k}} \right) \neq 0 , \quad \ell=1, \ldots , n_{1},\\
q_{1} &= c^{(3)} - \sum_{\substack{j=1\\ j \, \text{even}}}^{n_{2}} a_{j}^{(3)}, \qquad q_{0} = b^{(3)} + \sum_{\substack{j=1\\ j \, \text{even}}}^{n_{2}} a_{j}^{(3)}\, x_{j,0}.
\end{align*}
\end{theorem}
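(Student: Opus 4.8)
The plan is a direct verification: starting from the explicitly prescribed parameters I would identify the linear pieces of each $f_{2,j}$ and then invoke Theorem~\ref{theo1} to read off the spline representation~\eqref{f3spline}. Since the first hidden layer in~\eqref{twol2} already has the form $\sigma(t\mathbf 1-\mathbf x)$, each $f_{2,j}(t)=c_j^{(2)}t+b_j^{(2)}+\sum_{k=1}^{n_1}a_{j,k}^{(2)}\sigma(t-x_k)$ is a CPL spline with the knots $x_1<\dots<x_{n_1}$, so $f_{2,j}(t)=\mu_{j,\nu}t+\eta_{j,\nu}$ on $(x_\nu,x_{\nu+1})$ with $\mu_{j,\nu},\eta_{j,\nu}$ as in~\eqref{munu1}. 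From $a_{j,k}^{(2)}=\mu_{j,k}-\mu_{j,k-1}$ and $\mu_{j,0}=c_j^{(2)}$ the telescoping sum shows the slopes are exactly the prescribed $\mu_{j,\nu}$. For the intercepts I would argue by induction on $\nu$: $\eta_{j,0}=b_j^{(2)}=-x_{j,0}\mu_{j,0}$, and the continuity relation $\mu_{j,\nu-1}x_\nu+\eta_{j,\nu-1}=\mu_{j,\nu}x_\nu+\eta_{j,\nu}$ combined with the identity $\mu_{j,\nu}(x_\nu-x_{j,\nu})=\mu_{j,\nu-1}(x_\nu-x_{j,\nu-1})$ — which is precisely the product definition of $\mu_{j,\nu}$ — yields $\eta_{j,\nu}=-x_{j,\nu}\mu_{j,\nu}$, so $f_{2,j}$ has the zero $-\eta_{j,\nu}/\mu_{j,\nu}=x_{j,\nu}$ on its $\nu$-th piece. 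Using the nesting hypothesis $x_{j,k-1}<x_k<x_{j,k}$, every factor $\tfrac{x_\ell-x_{j,\ell-1}}{x_\ell-x_{j,\ell}}$ of $\mu_{j,\nu}$ has positive numerator and negative denominator, whence $\mu_{j,\nu}\neq0$, $\sign(\mu_{j,\nu})=(-1)^\nu\sign(c_j^{(2)})=(-1)^{\nu+j+1}$, and $x_{j,\nu}\in(x_\nu,x_{\nu+1})$ is a singular zero of $f_{2,j}$.

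Next I would apply Theorem~\ref{theo1} to $f_3=(c^{(3)}t+b^{(3)})+\sum_{j=1}^{n_2}a_j^{(3)}\sigma(f_{2,j}(t))$: its formulas~\eqref{q1}–\eqref{alphanuk} apply verbatim with $\tilde c_j^{(2)}=c_j^{(2)}$, $\tilde b_j^{(2)}=b_j^{(2)}$, $\tilde a_{j,k}^{(2)}=a_{j,k}^{(2)}$ and give~\eqref{f3spline} with second-level breakpoints $x_{j,\nu}$. Substituting $c_j^{(2)}=(-1)^{j+1}\neq0$ and $b_j^{(2)}=(-1)^j x_{j,0}$ makes the $\chi_{\{0\}}$ terms in~\eqref{q1}–\eqref{q0} vanish and recovers $q_1=c^{(3)}-\sum_{j\text{ even}}a_j^{(3)}$, $q_0=b^{(3)}+\sum_{j\text{ even}}a_j^{(3)}x_{j,0}$. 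From~\eqref{alphanuk}, $\alpha_{j,\nu}=a_j^{(3)}|\mu_{j,\nu}|\chi_{(x_\nu,x_{\nu+1})}(x_{j,\nu})=a_j^{(3)}|\mu_{j,\nu}|\neq0$, and the product formula for $|\mu_{j,\nu}|$ gives the stated $\alpha_{j,\nu}$. For the first-level coefficients, $f_{2,j}(x_\ell)=\mu_{j,\ell}(x_\ell-x_{j,\ell})$ has sign $(-1)^{\ell+j+1}\cdot(-1)=(-1)^{\ell+j}$, so $f_{2,j}(x_\ell)\neq0$ and $\chi_{(0,\infty)}(f_{2,j}(x_\ell))=1$ precisely when $j+\ell$ is even; hence~\eqref{alphak} collapses to $\alpha_\ell=\sum_{j:\,j+\ell\text{ even}}a_j^{(3)}a_{j,\ell}^{(2)}$, and inserting the formula of Corollary~\ref{lem3} for $a_{j,\ell}^{(2)}$ (with $c_j^{(2)}=(-1)^{j+1}=(-1)^{\ell+1}$ when $j+\ell$ is even) yields exactly the claimed expression for $\alpha_\ell$.

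The final — and hardest — step is to show $\alpha_\ell\neq0$, i.e.\ that the first-level knots are active. For every $j$ with $j+\ell$ even the factor $\bigl(\tfrac{x_{j,\ell}-x_{j,\ell-1}}{x_\ell-x_{j,\ell}}\bigr)\prod_{k=1}^{\ell-1}\bigl(\tfrac{x_k-x_{j,k-1}}{x_k-x_{j,k}}\bigr)$ has the \emph{same} sign $(-1)^\ell$: each of the $\ell-1$ factors of the product is negative (positive numerator, negative denominator, by the nesting), and the leading factor is negative since $x_{j,\ell}>x_\ell>x_{j,\ell-1}$. On the other hand, the sign hypothesis $\sign(\mathbf A^{(3)})=\pm((-1)^j)_j$ (with a single global $\pm$) forces all $a_j^{(3)}$ with $j\equiv\ell\pmod 2$ to share one common sign. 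Thus every summand of $\alpha_\ell$ is nonzero with a common sign, so $\alpha_\ell\neq0$ and $x_\ell$ is active. Since the $(n_1+1)(n_2+1)-1$ prescribed knots are pairwise distinct and, by Corollary~\ref{cor1}, $f_3$ has at most that many breakpoints, the prescribed knots are exactly the breakpoints of $f_3$, all active. I expect the sign bookkeeping in this last step to be the main obstacle: the alternating sign pattern imposed on $\mathbf A^{(3)}$ is exactly what rules out cancellation in the sum for $\alpha_\ell$, whereas the earlier steps are essentially mechanical once Theorem~\ref{theo1}, Lemma~\ref{lem2} and Corollary~\ref{lem3} are in hand.
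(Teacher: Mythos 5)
Your proposal is correct and follows essentially the same route as the paper: verify by induction that $\eta_{j,\nu}=-\mu_{j,\nu}x_{j,\nu}$ so each $f_{2,j}$ has its piecewise zeros at the prescribed $x_{j,\nu}$, then read off the coefficients from Theorem \ref{theo1} (equivalently Corollary \ref{cor3}) and use the alternating-sign pattern of ${\mathbf A}^{(3)}$ together with the common sign $(-1)^{\ell}$ of the summands to conclude $\alpha_{\ell}\neq 0$. The only cosmetic differences are that you obtain $q_{1},q_{0}$ by substituting into (\ref{q1})--(\ref{q0}) rather than via the limits at $-\infty$ used in the paper, which is equivalent.
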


\begin{proof} 
We show that the function $f_{3}$ in (\ref{twol2}) determined by the parameters given in the theorem  possesses the representation (\ref{f3spline}). The structure of $f_{3}$ in (\ref{twol2}) already implies that $f_{3}$ possesses the first layer breakpoints $(x_{\ell})_{\ell=1}^{n_{1}}$.
The second layer breakpoints are obtained by $-\frac{\eta_{j,\nu}}{\mu_{j,\nu}}$ with $\mu_{j,\nu}$ as given in the theorem and with 
$\eta_{j,\nu}$ in (\ref{munu1}). Obviously, we obtain 
$$ \textstyle -\frac{\eta_{j,0}}{\mu_{j,0}}= -\frac{b_{j}^{(2)}}{c_{j}^{(2)}}=x_{j,0}, \qquad j=1, \ldots , n_{2}. $$ 
Further, 
using the recursions $\mu_{j,\nu} =\mu_{j,\nu-1} \left( \frac{x_{\nu} - x_{j,\nu-1}}{x_{\nu} - x_{j,\nu}} \right)$ and 
$\eta_{j,\nu} = \eta_{j,\nu-1} -a_{j,\nu}^{(2)} x_{\nu} = \eta_{j,\nu-1}-(\mu_{j,\nu} - \mu_{j,\nu-1}) x_{\nu}$ we obtain inductively from $\eta_{j,\nu-1}= -\mu_{j,\nu-1} x_{j,\nu-1}$ that 
\begin{align*} \textstyle -\frac{\eta_{j,\nu}}{\mu_{j,\nu}} &= \textstyle - \frac{-x_{j,\nu-1}\mu_{j,\nu-1} -(\mu_{j,\nu} - \mu_{j,\nu-1}) x_{\nu}}{\mu_{j,\nu}} = 
- \frac{\mu_{j,\nu-1}(x_{\nu}-x_{j,\nu-1}) - \mu_{j,\nu}x_{\nu}}{\mu_{j,\nu}} \\
&= \textstyle - \frac{\mu_{j,\nu}(x_{\nu}- x_{j,\nu}) -  \mu_{j,\nu}x_{\nu}}{\mu_{j,\nu}} = x_{j,\nu}, \quad j=1, \ldots , n_{2}, \; \nu=1, \ldots , n_{1}, 
\end{align*}
i.e., $f_{3}$ possesses the breakpoints $x_{j,\nu}$ as given in (\ref{f3spline}). 

With  ${\mathbf c}^{(2)}$  and ${\mathbf A}^{(3)}$ and the fixed breakpoints $(x_{\ell})_{\ell=1}^{n_{1}}$ and $(x_{j,\nu})_{j=1,\nu=0}^{n_{2}, n_{1}}$ as given in Theorem \ref{theomax1} the coefficients $\alpha_{j,\nu}$ and $\alpha_{\ell}$ in (\ref{f3spline}) are already determined by Corollary \ref{cor3}. 
We easily verify, that $\alpha_{j,\nu} \neq 0$. Further, $\sign({c}_{j}^{(2)}) = (-1)^{j+1}$, and therefore
$$ \alpha_{\ell} =  (-1)^{\ell+1} \textstyle\sum\limits_{\substack{j=1\\ j+\ell \, \text{even}}}^{n_{2}} a_{j}^{(3)} \left( \frac{x_{j,\ell}-x_{j,\ell-1}}{x_{\ell} - x_{j,\ell}} \right) \prod\limits_{k=1}^{{\ell-1}} \left(\frac{x_{k} - x_{j,k-1}}{x_{k} - x_{j,k}} \right), $$
where, by construction the term $\left( \frac{x_{j,\ell}-x_{j,\ell-1}}{x_{\ell} - x_{j,\ell}} \right) \prod\limits_{k=1}^{{\ell-1}} \left(\frac{x_{k} - x_{j,k-1}}{x_{k} - x_{j,k}} \right)$ has the sign $(-1)^{\ell}$. Therefore, we add in the sum either only negative or only positive terms, such that $\alpha_{\ell} \neq 0$ for $\ell=1, \ldots , n_{1}$.
Finally, for given $c^{(3)}, \, b^{(3)} \in {\mathbb R}$ we obtain  with (\ref{twof2}) and Lemma \ref{lemsigma}
\begin{align*} q_{1} &=  \textstyle \lim\limits_{t \to - \infty} \partial(f_{3}(t)) = c^{(3)} +  \sum\limits_{j=1}^{n_{2}} a_{j}^{(3)} \lim\limits_{t \to - \infty}\Big( \partial \sigma(f_{2,j}(t) \Big) = c^{(3)} -  \sum\limits_{j=1}^{n_{2}} a_{j}^{(3)} \sigma(-c_{j}^{(2)}) \\
q_{0} &= \textstyle \lim\limits_{t \to - \infty} (f_{3}(t) - q_{1}t) = b^{(3)} + \sum\limits_{j=1}^{n_{2}} a_{j}^{(3)} \lim\limits_{t \to - \infty} (\sigma(f_{2,j}(t))+ \sigma(-c_{j}^{(2)})t) \\
&=  \textstyle b^{(3)} + \sum\limits_{j=1}^{n_{2}} a_{j}^{(3)} b_{j}^{(2)} \sigma(-c_{j}^{(2)}),
\end{align*}
where we have used that $\lim\limits_{t \to - \infty} \partial \sigma(f_{2,j}(t)) = -\sigma(-c_{j}^{(2)})$ and
$\lim\limits_{t \to - \infty} ( \sigma(f_{2,j}(t)) + \sigma(-c_{j}^{(2)})t ) = b_{j}^{(2)} \sigma(-c_{j}^{(2)})$.
\end{proof}

Theorem \ref{theomax1} implies that the model ${\mathcal Y}_{n_{1}, n_{2}}$  contains at least $(n_{1}+1
)(n_{2}+1) + n_{2} +1$ independent
real parameters  and $n_{2}$ sign parameters, namely $(n_{1}+1)(n_{2}+1)-1$ breakpoints, ${\mathbf A}^{(3)}$,  $b^{(3)}$, $c^{(3)}$, and ${\mathbf c}^{(2)}$. If we do not employ a source channel, then $f_{3}$ can still have the maximal number of $(n_{1}+1
)(n_{2}+1) -1$ breakpoints, as we have seen in Example \ref{ex1}, but these breakpoints cannot all be independently chosen.

\begin{example}\label{ex1a}
We consider again $f_{3} \in {\mathcal Y}_{3,3}$ in Example \ref{ex1}.
To define $f_{3}$ we had taken ${\mathbf c}^{(2)} = {\mathbf 0}$, i.e., no source channel. However, transforming the model (\ref{twol}) into (\ref{twolalt1}) 
provides 
$\tilde{\mathbf c}^{(2)} =(1, 0.5, -0.5)^{T}$ and 
$\tilde{\mathbf b}^{(2)} = (-0.5, -0.3, 0.2)^{T}$, 
where, similarly as in the proof of Lemma \ref{lemma1}
$$ \tilde{c}_{j}^{(2)} = c_{j}^{(2)} - \sum_{k=1}^{3} a_{j,k}^{(2)} \sigma(-a_{k}^{(1)}) = c_{j}^{(2)} - a_{j,2}^{(2)}-a_{j,3}^{(2)} = - a_{j,2}^{(2)}-a_{j,3}^{(2)}. $$
In other words, $\tilde{c}_{j}^{(2)}$ depends on the parameters in ${\mathbf A}^{(2)}$ and ${\mathbf A}^{(1)}$.
Therefore, the $15$ breakpoints are not longer completely independent. In this example  observe that 
$$ \textstyle \frac{(x_{1} - x_{j,0})}{(x_{1}-x_{j,1})} + \frac{(x_{1} - x_{j,0})(x_{2} - x_{j,1})}{(x_{1}-x_{j,1})(x_{2}-x_{j,2})} = 
1 + \frac{(x_{1} - x_{j,0})(x_{2} - x_{j,1})}{(x_{1}-x_{j,1})(x_{2}-x_{j,2})} + \frac{(x_{1} - x_{j,0})(x_{2} - x_{j,1})(x_{3} - x_{j,2})}{(x_{1}-x_{j,1})(x_{2}-x_{j,2})(x_{3}-x_{j,3})}
$$
and thus
$$ x_{j,3} = \textstyle x_{3} - \frac{(x_1-x_{j,0})(x_{2} - x_{j,1})(x_{3} - x_{j,2})}{(x_{2}-x_{j,2})(x_{j,1}-x_{j,0})} \qquad j=1,2,3.
$$
\end{example}

Generally, we find the following dependencies on breakpoints if the source channel in the model is not used, i.e., if ${\mathbf c}^{(2)} = {\mathbf 0}$.

\begin{lemma}\label{lem4}
For $n_{1}, n_{2} \ge 1$ let  $f_3 \in {\mathcal Y}_{n_{1}, n_{2}}$ in $(\ref{twol})$ with ${\mathbf c}^{(2)} = {\mathbf 0}$, and 
assume that $f_{3}$ possesses the maximal number of breakpoints. Then there exists an index set $ I \subset \{1, \ldots , n_{1}\}$ with $I \neq \emptyset$ and $I \neq \{1, \ldots , n_{1}\}$ such that we have for all $j=1, \ldots, n_{2}$,
\begin{equation}\label{xrel}
 \textstyle \sum\limits_{k \in I} \prod\limits_{\ell=1}^{k-1} \left( \frac{x_{\ell} - x_{j,\ell-1}}{x_{\ell} - x_{j,\ell}} \right)  = 1+ \sum\limits_{k \in I} \prod\limits_{\ell=1}^{k} \left( \frac{x_{\ell} - x_{j,\ell-1}}{x_{\ell} - x_{j,\ell}} \right),
\end{equation}
with the convention that $\prod\limits_{\ell=1}^{0} \left( \frac{x_{\ell} - x_{j,\ell-1}}{x_{\ell} - x_{j,\ell}} \right) =1$.
\end{lemma}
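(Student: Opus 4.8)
The plan is to exploit that, once the source channel is switched off ($\mathbf c^{(2)}=\mathbf 0$), passing from the model (\ref{twol}) to its equivalent spline form (\ref{twol2}) forces one linear relation among the slopes of each $f_{2,j}$, governed by a single index set $I$ that is common to all $j$ because it depends only on the sign pattern of $\mathbf A^{(1)}$. Concretely, I would first transfer $f_3$ into the form (\ref{twol2}) exactly as in the proof of Theorem \ref{theo1} (see (\ref{twolalt1})). By (\ref{bc}) with $c_j^{(2)}=0$ the new linear coefficients are $\tilde c_j^{(2)}=-\sum_{k=1}^{n_1}a_{j,k}^{(2)}\sigma(-a_k^{(1)})$, and since $\sigma(-a_k^{(1)})=|a_k^{(1)}|$ exactly when $a_k^{(1)}<0$ and vanishes otherwise, this equals $-\sum_{k:a_k^{(1)}<0}a_{j,k}^{(2)}|a_k^{(1)}|$. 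The reindexing that orders the breakpoints $x_k=-b_k^{(1)}/a_k^{(1)}$ is the \emph{same} permutation for every $j$ and carries the entries $a_{j,k}^{(2)}|a_k^{(1)}|$ to the columns of $\tilde{\mathbf A}^{(2)}$, so $\tilde c_j^{(2)}=-\sum_{k\in I}\tilde a_{j,k}^{(2)}$, where $I\subseteq\{1,\dots,n_1\}$ collects the reordered indices corresponding to negative first-layer slopes; in particular $I$ is independent of $j$. Writing this in the spline notation (dropping tildes) and using $c_j^{(2)}=\mu_{j,0}$ together with $a_{j,k}^{(2)}=\mu_{j,k}-\mu_{j,k-1}$ from (\ref{munu1}), I obtain $\mu_{j,0}=-\sum_{k\in I}(\mu_{j,k}-\mu_{j,k-1})$ for each $j=1,\dots,n_2$.

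Next, since $f_3$ attains the maximal number of breakpoints, Lemma \ref{lem2} and Corollary \ref{lem3} give $c_j^{(2)}=\mu_{j,0}\neq0$, $\mu_{j,\nu}\neq0$ for all $\nu$, and $\mu_{j,\nu}=c_j^{(2)}\prod_{\ell=1}^{\nu}\big(\tfrac{x_\ell-x_{j,\ell-1}}{x_\ell-x_{j,\ell}}\big)$ for $\nu=0,\dots,n_1$ (with the empty product equal to $1$). From these I rule out the two degenerate choices of $I$: if $I=\emptyset$ then $\mu_{j,0}=0$, and if $I=\{1,\dots,n_1\}$ the relation collapses to $\mu_{j,n_1}=0$; both contradict maximality. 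Hence $I$ is a nonempty proper subset (so $n_1\geq2$ is in fact forced, which is consistent since $\{1,\dots,n_1\}$ has no nonempty proper subset when $n_1=1$). Finally, substituting the product formula for $\mu_{j,k}$ into $\mu_{j,0}=-\sum_{k\in I}(\mu_{j,k}-\mu_{j,k-1})$ and dividing by $c_j^{(2)}\neq0$ yields $1=\sum_{k\in I}\big(\prod_{\ell=1}^{k-1}\tfrac{x_\ell-x_{j,\ell-1}}{x_\ell-x_{j,\ell}}-\prod_{\ell=1}^{k}\tfrac{x_\ell-x_{j,\ell-1}}{x_\ell-x_{j,\ell}}\big)$, which is exactly (\ref{xrel}).

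The only delicate point is the bookkeeping in the first step: one must track how the sorting permutation and the factors $|a_k^{(1)}|$ act on $\tilde c_j^{(2)}$, and verify that the resulting index set $I$ is genuinely the same for all $j$ (it is, being determined solely by the sign pattern of $\mathbf A^{(1)}$, which is shared by all the $f_{2,j}$). This is precisely where the non-degeneracy assumption on $\mathbf A^{(1)}$ made at the beginning of Section \ref{sec:two} enters, making the sorting permutation well defined and $I$ unambiguous; everything afterwards is a routine substitution based on the recursion (\ref{rec1}) of Lemma \ref{lem2}.
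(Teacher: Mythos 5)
Your proposal is correct and follows essentially the same route as the paper: transfer to the normalized form via (\ref{bc}) with $c_j^{(2)}=0$ so that $\tilde c_j^{(2)}=-\sum_{k\in I}\tilde a_{j,k}^{(2)}$ with $I$ the (permuted) set of indices with $a_k^{(1)}<0$, rule out $I=\emptyset$ and $I=\{1,\dots,n_1\}$ from $\mu_{j,0}\neq 0$ and $\mu_{j,n_1}\neq 0$, and then substitute the product formula (\ref{rec1}) for $\mu_{j,k}-\mu_{j,k-1}$ and divide by $\mu_{j,0}$ to obtain (\ref{xrel}). This matches the paper's proof step for step, including the observation that $I$ is independent of $j$.
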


\begin{proof} We use here the notations as in Theorem \ref{theo1}.
As shown in the proof of Theorem \ref{theo1}, we can always rewrite $f_{3}$ into a model of the form (\ref{twolalt1}), where the components of $\tilde{\mathbf c}^{(2)}$ satisfy (\ref{bc}), i.e.,
\begin{equation}\label{index} \tilde{c}_{j}^{(2)} = - \sum_{\substack{k=1\\ a_{k}^{(1)}<0}}^{n_{1}} a_{j,k}^{(2)} |a_{k}^{(1)}| = 
- \sum_{k \in I} \tilde{a}_{j,k}^{(2)}, 
\end{equation}
and where $(\tilde{a}_{j,k}^{(2)})_{k=1}^{n_{1}}$ is obtained from  $({a}_{j,k}^{(2)} |a_{k}^{(1)}|)_{k=1}^{n_{1}}$ by a permutation (to reorder the breakpoints $x_{k}= -b^{(1)}_{k}/a_{k}^{(1)}$) and $I \subset \{1, \ldots , n_{1}\}$. 
By assumption, all $x_{j,\nu}= -\frac{\eta_{j,\nu}}{\mu_{j,\nu}}$ with $\mu_{j,\nu}$ and $\eta_{j,\nu}$ in (\ref{munu}) are well defined, i.e., $\mu_{j,\nu} \neq 0$. 
In particular,  $\tilde{c}_{j}^{(2)} = \mu_{j,0}\neq 0$ for all $j$. Therefore, the index set $I$ (corresponding to permuted negative  components of $a_{k}^{(1)}$) in (\ref{index})  is not empty.  
Further, since $\mu_{j,n_{1}} = \mu_{j,0} + \sum_{k=1}^{n_{1}} \tilde{a}_{j,k}^{(2)} \neq 0$, the index set  $I$ cannot be equal to $\{1, \ldots , n_{1}\}$.
Using that $\tilde{a}_{j,k}^{(2)} = \mu_{j,k} - \mu_{j,k-1}$ together with (\ref{rec1}) (with $c^{(2)}_{j}$ replaced by $\tilde{c}_{j}^{(2)}$), we obtain 
\begin{align*}
 \mu_{j,0} &=  \textstyle -\sum\limits_{k \in I} \tilde{a}_{j,k}^{(2)} = -\sum\limits_{k \in I} (\mu_{j,k} - \mu_{j,k-1})  \\
 &=  \textstyle -\sum\limits_{k \in I} \mu_{j,0}  \left( \prod\limits_{\ell=1}^{k} \left( \frac{x_{\ell} - x_{j,\ell-1}}{x_{\ell} - x_{j,\ell}} \right) -
  \prod\limits_{\ell=1}^{k-1} \left( \frac{x_{\ell} - x_{j,\ell-1}}{x_{\ell} - x_{j,\ell}} \right)\right).
\end{align*}
Thus,
$$  \textstyle \sum\limits_{k \in I} \prod\limits_{\ell=1}^{k-1} \left( \frac{x_{\ell} - x_{j,\ell-1}}{x_{\ell} - x_{j,\ell}} \right)  =  1+ \sum\limits_{k \in I}\prod\limits_{\ell=1}^{k} \left( \frac{x_{\ell} - x_{j,\ell-1}}{x_{\ell} - x_{j,\ell}} \right). $$
\end{proof}


\begin{remark}
\label{remred}
1. The redundancy relation (\ref{xrel}) occurs in Example \ref{ex1} with $I=\{2,3\}$.

2. Without the source channel, i.e. for ${\mathbf c}^{(2)} = {\mathbf 0}$, the largest possible number of $(n_{1}+1)(n_{2}+1)-1$ breakpoints cannot by achieved for $n_{1}<3$ or $n_{2}<2$. For $n_{1}=1$ we cannot find a subset $I \subset \{1\}$, which satisfies $I \neq \emptyset$ and $I \neq \{ 1\}$. For $n_{1} =2$ we still cannot satisfy the redundancy relation (\ref{xrel}). For $I=\{1\}$ we immediately  get a contradiction from $ 1 = 1 + \left( \frac{x_{1} - x_{j,0}}{x_{1} - x_{j,1}} \right)$. For $I=\{2\}$, the relation (\ref{xrel}) leads to 
$$ \textstyle x_{1,0} = x_{1} - \frac{(x_{2}-x_{1,2})(x_{1}- x_{1,1})}{(x_{1,1} - x_{1,2})} $$
but $x_{1,0} > x_{1}$ contradicts the requirement $x_{1,0} \in (-\infty, x_{1})$. 
For $n_{2}=1$ we cannot realize the needed sign change for $\mu_{j,0}$, see the proof of Corollary \ref{cor3}.
\end{remark}

\subsection{Independent parameter set for the ReLU model with two hidden layers}
\label{secred}

The model ${\mathcal Y}_{n_{1}, n_{2}}$ in  (\ref{twol}) with two hidden layers  contains $n_{1}n_{2} + 2n_{1} + 3 n_{2} + 2$ parameters. 
As seen in Corollary \ref{corscaling} it can be always represented with $n_{1}n_{2} + n_{1} + 2n_{2} + 2 $ real parameters and $n_{2}$ sign parameters.
Using the observations in Subsection \ref{sec:break}, we can show that this set of parameters is independent.

\begin{theorem}\label{theored1}
Any function $f_{3} \in {\mathcal Y}_{n_{1},n_{2}}$ in $(\ref{twol})$ can be represented as a function of the form $(\ref{twolalt})$, i.e., 
\begin{align}\nonumber f_{3}(t) &=  {c}^{(3)} t + {b}^{(3)} + {\mathbf A}^{(3)} \sigma({\mathbf A}^{(2)} \sigma(t {\mathbf 1} + {\mathbf b}^{(1)}) + t {\mathbf c}^{(2)}  + {\mathbf b}^{(2)})\\
\label{twol3a}
&=  \textstyle {c}^{(3)} t + {b}^{(3)} + \sum\limits_{j=1}^{n_{2}} {a}^{(3)}_{j} \sigma\left( \sum\limits_{k=1}^{n_{1}} {a}^{(2)}_{j,k} \, \sigma(t + {b}^{(1)}_{k}) + {c}_{j}^{(2)} t + {b}^{(2)}_{j} \right), 
\end{align}
with a sign vector ${\mathbf c}^{(2)}$, i.e., $c_{j}^{(2)} \in \{-1,0, 1\}$ and with $n_{1}n_{2} + 2n_{2}+ n_{1} + 2$ real parameters ${c}^{(3)}$, ${b}^{(3)}$, ${a}_{j}^{(3)}$, ${a}_{j,k}^{(2)}$, ${b}_{k}^{(1)}$, ${b}_{j}^{(2)}$, $j=1, \ldots , n_{2}$, $k=1, \ldots , n_{1}$.
Moreover, these parameters  are independent, i.e., any restriction of ${\mathcal Y}_{n_{1},n_{2}}$ to a model $\tilde{\mathcal Y}_{n_{1},n_{2}}$, where one or more of these parameters are a priori fixed, leads to 
$$ \tilde{\mathcal Y}_{n_{1},n_{2}} \subsetneq {\mathcal Y}_{n_{1},n_{2}}. $$
\end{theorem}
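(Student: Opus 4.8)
The representation \eqref{twol3a} is a restatement of Corollary~\ref{corscaling} (put $\mathbf b^{(1)}:=-\mathbf x$), so all the work lies in the independence assertion. The plan is as follows. Fix one of the $m:=n_{1}n_{2}+2n_{2}+n_{1}+2$ real parameters, call it $p$, together with a constant $c_{0}\in\mathbb{R}$, and let $\tilde{\mathcal Y}_{n_{1},n_{2}}$ be the set of functions still representable by \eqref{twol3a} subject to $p=c_{0}$, with $\mathbf c^{(2)}$ an arbitrary sign vector. Since $\tilde{\mathcal Y}_{n_{1},n_{2}}\subseteq{\mathcal Y}_{n_{1},n_{2}}$ trivially, it suffices to exhibit one $f_{3}\in{\mathcal Y}_{n_{1},n_{2}}\setminus\tilde{\mathcal Y}_{n_{1},n_{2}}$; the case of several pre-fixed parameters then follows a fortiori, since that only shrinks the model further.

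I would run the argument on the functions of maximal complexity. Let $\Phi$ send a parameter vector $\theta$ (the $m$ reals together with a sign vector) to the function $f_{3}^{\theta}$ in \eqref{twol3a}, and let $\Theta$ be the set of $\theta$ for which $f_{3}^{\theta}$ has the maximal number $(n_{1}+1)(n_{2}+1)-1$ of active breakpoints. By Theorem~\ref{theomax1}, $\Theta\neq\emptyset$, and $\Theta$ is open, since maximality amounts to non-vanishing of finitely many (piecewise) rational expressions in $\theta$ together with distinctness of the $x_{j,\nu}$. By Theorem~\ref{theo1} every such $f_{3}^{\theta}$ has its breakpoints arranged in the rigid pattern ``$n_{2}$ breakpoints, then $x_{1}$, then $n_{2}$ breakpoints, then $x_{2}$, $\dots$, then $n_{2}$ breakpoints'': hence its $n_{2}$ leftmost breakpoints are exactly $x_{1,0},\dots,x_{n_{2},0}$, and by Corollary~\ref{cor3} the jumps of $(f_{3}^{\theta})'$ there are the numbers $a_{j}^{(3)}$. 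This lets me introduce the intrinsic coordinate map
\[
\kappa(g):=\bigl(\ \text{the }(n_{1}+1)(n_{2}+1)-1\text{ breakpoint locations of }g;\ \text{the jumps of }g'\text{ at the }n_{2}\text{ leftmost breakpoints};\ q_{1},q_{0}\ \bigr)\in\mathbb{R}^{m},
\]
where $q_{1}t+q_{0}$ is the affine piece of $g$ on its leftmost unbounded interval. Each entry depends only on $g$, not on any representation of it, and $\kappa\circ\Phi$ is locally Lipschitz (indeed piecewise rational) on $\Theta$, by the explicit formulas \eqref{q1}--\eqref{alphanuk} and $x_{j,\nu}=-\eta_{j,\nu}/\mu_{j,\nu}$.

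The conclusion then comes from a dimension dichotomy. On the one hand, Theorem~\ref{theomax1} produces, for every admissible tuple — i.e.\ $(n_{1}+1)(n_{2}+1)-1$ ordered, suitably nested reals, an $\mathbf a^{(3)}$ of the prescribed sign pattern, and arbitrary $c^{(3)},b^{(3)}$ — a $\theta\in\Theta$ whose image $\kappa(f_{3}^{\theta})$ is the corresponding point of $\mathbb{R}^{m}$; since $(c^{(3)},b^{(3)})\mapsto(q_{1},q_{0})$ is an invertible affine map once the rest is fixed, and the reordering of $\mathbf a^{(3)}$ along the leftmost breakpoints is locally a permutation, $\kappa(\Phi(\Theta))$ contains a nonempty open set $O\subset\mathbb{R}^{m}$. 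On the other hand, if $f_{3}^{\theta}=f_{3}^{\theta'}$ with $\theta\in\Theta$, then $f_{3}^{\theta'}$ is again of maximal complexity, so $\theta'\in\Theta$ and $\kappa(f_{3}^{\theta'})=\kappa(f_{3}^{\theta})$. Therefore, if (towards a contradiction) $\Phi(\Theta)\subseteq\tilde{\mathcal Y}_{n_{1},n_{2}}$, then every $\theta\in\Theta$ has a representative $\theta'$ with $p$-coordinate $c_{0}$, whence $\kappa(\Phi(\Theta))\subseteq\kappa\bigl(\Phi(\Theta\cap\{p=c_{0}\})\bigr)$. But $\Theta\cap\{p=c_{0}\}$ lies in a set of dimension $m-1$ (a hyperplane in the real parameters, times the finite set of sign vectors), so its image under the locally Lipschitz map $\kappa\circ\Phi$ is a Lebesgue-null subset of $\mathbb{R}^{m}$ — contradicting $O\subseteq\kappa(\Phi(\Theta))$. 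Hence some $f_{3}\in\Phi(\Theta)\subseteq{\mathcal Y}_{n_{1},n_{2}}$ avoids $\tilde{\mathcal Y}_{n_{1},n_{2}}$, which is what we wanted.

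The step I expect to be the genuine obstacle is not the (soft) measure-theoretic estimate — a locally Lipschitz image of a set of Hausdorff dimension at most $m-1$ is Lebesgue-null in $\mathbb{R}^{m}$, and there are only finitely many sign vectors — but the bookkeeping that makes $\kappa$ a legitimate intrinsic chart: one must extract from Theorem~\ref{theo1} and Subsection~\ref{sec:break} that the breakpoint pattern of a maximal-complexity $f_{3}$ is genuinely forced, so that ``the $n_{2}$ leftmost breakpoints'' really carry the vector $\mathbf a^{(3)}$ and ``$q_{1},q_{0}$'' really carry the boundary data, and that $\kappa\circ\Phi$ is continuous and locally Lipschitz up to the locally finite stratification of $\Theta$ coming from the various $\max$-branches. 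Everything else is a direct appeal to Corollary~\ref{corscaling}, Theorem~\ref{theomax1}, and Corollary~\ref{cor3}.
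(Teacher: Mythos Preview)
Your argument is correct and rests on the same foundation as the paper's: both invoke Corollary~\ref{corscaling} for the representation, and both derive independence from Theorem~\ref{theomax1}, exploiting that the construction there supplies $m=n_{1}n_{2}+2n_{2}+n_{1}+2$ genuinely free choices (the $(n_{1}+1)(n_{2}+1)-1$ breakpoints, the vector $\mathbf A^{(3)}$, and $c^{(3)},b^{(3)}$). The paper's proof stops at that point and simply asserts that once one normalized parameter is frozen ``we cannot longer construct all functions $f_{3}$ with $(n_{1}+1)(n_{2}+1)-1$ arbitrarily prescribed breakpoints''; you instead close the argument rigorously by building the intrinsic coordinate map $\kappa$ and invoking that a locally Lipschitz image of an $(m-1)$-dimensional set is Lebesgue-null in $\mathbb R^{m}$. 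Your route is therefore more explicit and slightly more robust --- it needs no local injectivity of the parameter-to-function map, only that $\kappa(\Phi(\Theta))$ has nonempty interior --- while the paper leaves the dimension count implicit. The bookkeeping you flag as the real obstacle is indeed available: Theorem~\ref{theo1} forces the pattern ``$n_{2}$ breakpoints, then $x_{1}$, then $n_{2}$, \dots'' for any maximal-complexity $f_{3}$, and Corollary~\ref{cor3} gives $\alpha_{j,0}=a_{j}^{(3)}$, so the $n_{2}$ leftmost jumps are a permutation of $\mathbf A^{(3)}$ as claimed; the only additional care is to work chamber-by-chamber on $\Theta$ (where all the sign conditions in \eqref{alphak}--\eqref{alphanuk} are locally constant), on which $\kappa\circ\Phi$ is rational, and this already suffices for the null-set estimate after the finite union over sign vectors.
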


\begin{proof}
The representation (\ref{twol3a}) of the functions $f_{3}$ in the model ${\mathcal Y}_{n_{1}, n_{2}}$ follows already from Corollary \ref{corscaling}.
On the other hand, as shown in Theorem \ref{theomax}, we can construct a function $f_{3}$ with $n_{1}n_{2} + n_{1} + n_{2}$ arbitrarily chosen breakpoints in ${\mathbb R}$, and in that model, we can still choose ${\mathbf A}^{(3)} \in {\mathbb R}^{1 \times n_{2}}$ and the sign vector ${\mathbf c}^{(2)}$. The restrictions on  $\sign({\mathbf A}^{(3)}) $ considered in the proof of Theorem \ref{theomax1} ensure that the function $f_{3}$ indeed possesses the maximal number of active breakpoints. If these conditions are not satisfied then we still obtain a function $f_{3} \in {\mathcal Y}_{n_{1}, n_{2}}$, where however not a maximal number of breakpoints may be active. Thus, the model possesses at least 
$n_{1}n_{2} + 2n_{2} + n_{1} + 2$ independent real parameters and one sign vector of length $n_{2}$. 
Indeed, if one of the  parameters is fixed a priori, then, as shown in Theorem \ref{theomax1}, we cannot longer  construct  all functions $f_{3} \in {\mathcal Y}_{n_{1},n_{2}}$  with $(n_{1}+1
)(n_{2}+1)-1$  arbitrarily prescribed  breakpoints.
\end{proof}

We finish this section by presenting  an example that shows  that  we can construct a function $f_{3} \in {\mathcal Y}_{n_{1},n_{2}}$
with $n_{1}n_{2} + n_{1}$ pre-determined  breakpoints without using the source channel, i.e., with ${\mathbf c}^{(2)} = {\mathbf 0}$ and $c^{(3)} =0$. Note that by Lemma \ref{lem4}, this is the  maximal  possible number of breakpoints we can prescribe in this case.

\begin{example}\label{ex2}
Consider the case $n_1=3$ and $n_2=2$ such that $n_1n_2+n_1=9$. We will construct $f_{3}$ in (\ref{twol2})  with breakpoints $x_k=k$ for $k=1,\ldots,9$. To this end we fix
\begin{align*}
x_1^{(1)} &= 1, & x_2^{(1)} &= 4, & x_3^{(1)} &= 7,\\
x_{1,1}^{(2)} &= 2, & x_{1,2}^{(2)} &= 5, & x_{1,3}^{(2)} &= 8,\\
x_{2,1}^{(2)} &= 3, & x_{2,2}^{(2)} &= 6, & x_{2,3}^{(2)} &= 9.
\end{align*}
Further, we choose
${\mathbf x} = \mathbf{x}^{(1)}=(x_k^{(1)})_{k=1}^3=(1,4,7)^T$, ${\mathbf c}^{(2)} = {\mathbf 0}$, $c^{(3)} = b^{(3)} =0$. Thus  we have $\mu_{j,0}=0$ for $j=1,2$.  
Let $(a_{1,1}^{(2)}, a_{2,1}^{(2)})^{T}  = (-1,1)^{T}$.
We can  now use (\ref{murec}) with initial values $\mu_{1,1} = a_{1,1}^{(2)} = -1$, $\mu_{2,1} = a_{2,1}^{(2)} = 1$, which  leads similarly as in (\ref{ajk2} to the recursion
$$  \textstyle a_{j,k}^{(2)} = \mu_{j,k} - \mu_{j,k-1} = a_{1,1}^{(2)} \left( \frac{x_{j,k}^{(2)}-x_{j,k-1}^{(2)}}{x_{j}^{(1)}- x_{j,k}^{(2)}} \right) \prod\limits_{\ell=2}^{k-1} \left( \frac{x_{\ell}^{(1)} - x_{j,\ell-1}^{(2)}}{x_{\ell}^{(1)} - x_{j,\ell}^{(2)}} \right), \quad j=1,2, \; k=2,3. $$
to determine the further components of ${\mathbf A}^{(2)}$. We obtain 
$$ {\mathbf A}^{(2)} = \begin{pmatrix}
-1 & 3 & -6\\
1 & -\frac{3}{2} & \frac{3}{4}
\end{pmatrix}.
$$
Further, we take
\begin{align*}
b_1^{(2)} &= a_{1,1}^{(2)}(x_1^{(1)}-x_{1,1}^{(2)}) = -1(1-2)=1,\\
b_2^{(2)} &= a_{2,1}^{(2)}(x_1^{(1)}-x_{2,1}^{(2)}) = 1(1-3)=-2.
\end{align*}
to ensure that $x_{j,1}^{(2)} = - \frac{\eta_{j,1}}{\mu_{j,1}} = -\frac{(b_{1}^{(2)} -a_{j,1}^{(2)} x_{1}^{(1)})}{a_{j,1}^{(2)}}$, $j=1,2$.
Finally, set $\mathbf{A}^{(3)}=(1,1)$. Thus we obtain
\begin{align*}
f_3(t) &= \textstyle (1,1)\sigma\left(\begin{pmatrix}
-1 & 3 & -6\\
1 & -\frac{3}{2} & \frac{3}{4}
\end{pmatrix}\sigma\left(\begin{pmatrix}
1\\
1\\
1
\end{pmatrix}t-\begin{pmatrix}
1\\
4\\
7
\end{pmatrix}\right)+\begin{pmatrix}
1\\
-2
\end{pmatrix}\right)\\
&= \sigma\left(-\sigma(t-1)+3\sigma(t-4)-6\sigma(t-7)+1\right)\\
&\quad+ \sigma\Big(\sigma(t-1)-\frac{3}{2}\sigma(t-4)+\frac{3}{4}\sigma(t-7)-2\Big).
\end{align*}
Note that $2,5,8$ are the zeros of $-\sigma(t-1)+3\sigma(t-4)-6\sigma(t-7)+1$ and $3,6,9$ are the zeros of $\sigma(t-1)-\frac{3}{2}\sigma(t-4)+\frac{3}{4}\sigma(t-7)-2$, see Figure \ref{fig2}.

\begin{figure}
\begin{center}
	\includegraphics[scale=0.35]{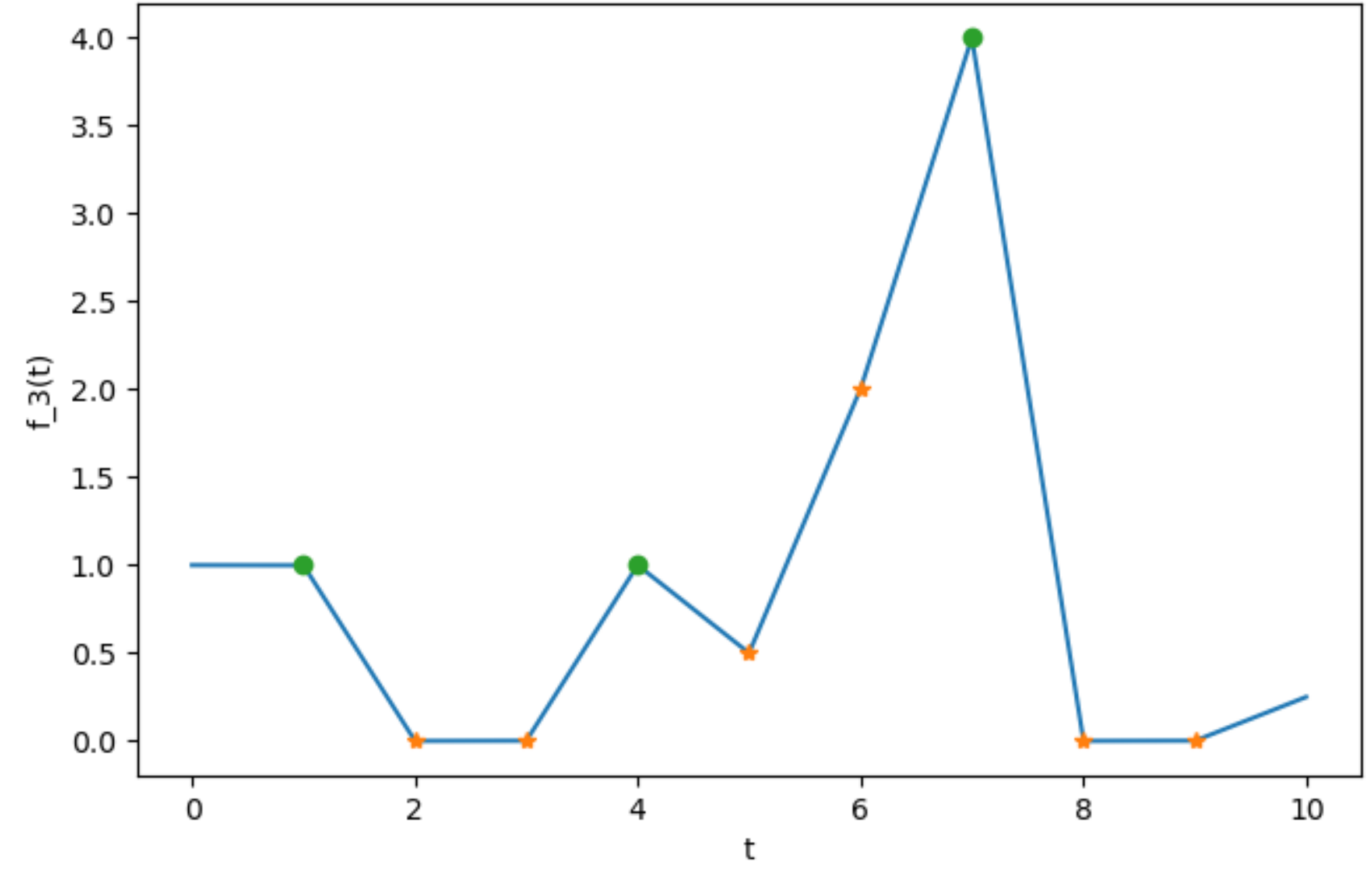} 
\end{center}
\caption{Illustration of $f_{3}$ in Example \ref{ex2} with $ $ prescribed knots.}
\label{fig2}
\end{figure}
\end{example}

\section{ReLU NN for three  and more hidden layers}
\label{sec:more}

In the general case $L\ge 3$, we consider the ReLU NN model ${\mathcal Y}_{n_{1},n_{2}, \ldots, n_{L-1}}$ with $L-1$ hidden layers  of widths $n_{0}= n_{L} =1$ and $n_{1}, \ldots , n_{L-1} \in {\mathbb N}$ in (\ref{DNN}), i.e.,
\begin{align} \label{LL}
f_{L}(t) &= t {\mathbf c}^{(L)} + {\mathbf b}^{(L)} + {\mathbf A}^{(L)} \sigma \Big( \ldots \sigma \Big( {\mathbf A}^{(2)}\sigma( t {\mathbf A}^{(1)}  + {\mathbf b}^{(1)}) + t {\mathbf c}^{(2)}+ {\mathbf b}^{(2)} \Big) + \ldots 
\Big) \\
\nonumber
&= \textstyle tc^{(L)}  + b^{(L)}+ \sum\limits_{k_{L-1}=1}^{n_{L-1}} \!\!a_{1,k_{L-1}}^{(L)} \sigma \Big(
\! \ldots  \sigma \Big(\sum\limits_{k_{1}=1}^{n_{1}} \!\!a_{k_{2},k_{1}}^{(2)} \sigma(a_{k_{1},1}^{(1)}t + b_{k_{1}}^{(1)}) + t c_{k_{2}}^{(2)} +  b_{k_{2}}^{(2)} \Big) + \! \ldots 
\Big) ,
\end{align}
where ${\mathbf A}^{(\ell)} = (a_{j,k}^{(\ell)})_{j,k=1}^{n_{\ell},n_{\ell-1}} \in\mathbb{R}^{n_\ell\times n_{\ell-1}}$ , $\ell=1,\ldots,L$, ${\mathbf b}^{(1)}\in{\mathbb R}^{n_{1}}$ and ${\mathbf b}^{(\ell)}, \, {\mathbf c}^{(\ell)} \in {\mathbb R}^{n_{\ell}}$, $\ell=2,\ldots , L$. 
In particular, ${\mathbf c}^{(L)}= c^{(L)} \in {\mathbb R}$ and 
${\mathbf b}^{(L)}= b^{(L)} \in {\mathbb R}$, since ${n}_{L}=1$.
Similarly as in Section \ref{sec:two}, we use the positive scaling property and set ${\mathbf A}^{(1)} = {\mathbf 1} \in {\mathbb R}^{n_{1}}$ and ${\mathbf c}^{(\ell)} = \sign({\mathbf c}^{(\ell)}) \in \{-1,0,1\}^{n_{\ell}}$, $\ell=2, \ldots, L-1$.
Then ${\mathcal Y}_{n_{1},n_{2}, \ldots, n_{L-1}}$ is determined by $n_{1}+ (n_{1}+1)n_{2}+ (n_{2}+ 1)n_{3}+ \ldots +(n_{L-2}+1)n_{L-1}+(n_{L-1}+1)n_{L} + 1$ real parameters and $n_{2}+n_{3}+ \ldots + n_{L-1}$ sign parameters.
The recursive structure of this model implies  that 
\begin{align}\label{Lrec}
f_{L}(t)  = t c^{(L)} + b^{(L)}
 + {\mathbf A}^{(L)} \sigma \Big( f_{L-1, j}(t) \Big)_{j=1}^{n_{L-1}} = t c^{(L)}  + b^{(L)} + \sum\limits_{j=1}^{n_{L-1}} a_{1,j}^{(L)} \sigma \Big( f_{L-1,j}(t) \Big)
\end{align}
where, with ${\mathbf A}_{j}^{(L-1)}$ denoting  the $j$-th row of ${\mathbf A}^{(L-1)}$,
\begin{align}\nonumber
& f_{L-1,j}(t) =  t c_{j}^{(L-1)} + b_{j}^{(L-1)} +{\mathbf A}^{(L-1)}_{j} \sigma \Big( \ldots \sigma \Big( {\mathbf A}^{(2)}\sigma(t {\mathbf A}^{(1)}  + {\mathbf b}^{(1)}) + t {\mathbf c}^{(2)}  + {\mathbf b}^{(2)} \Big) + \ldots  \Big)  \\
\label{flkl}
&= \textstyle \!t c_{j}^{(L-1)} \!+\! b_{j}^{(L-1)}+ \!\! \sum\limits_{k_{L-2}=1}^{n_{L-2}} \!\!a_{j,k_{L-2}}^{(L-1)} \sigma \Big(
\ldots \sigma \Big(\sum\limits_{k_{1}=1}^{n_{1}} \!\!a_{k_{2},k_{1}}^{(2)} \sigma(a_{k_{1},1}^{(1)}t + b_{k_{1}}^{(1)}) + t  c_{k_{2}}^{(2)}  + b_{k_{2}}^{(2)} \Big) + \ldots \Big)  
\end{align}
are $n_{L-1}$ functions in ${\mathcal Y}_{n_{1},n_{2}, \ldots , n_{L-2}}$. 

\subsection{Representation of the DNN model as a continuous linear spline function model} 

We will employ an induction argument  to represent  $f_{L} \in {\mathcal Y}_{n_{1},n_{2}, \ldots , n_{L-1}}$  as  a CPL  spline function in $\Sigma_{N_{L}}$ with at most $N_{L,\max}:=\prod\limits_{\ell=1}^{L-1} (n_{\ell} +1) -1$ breakpoints.

\begin{theorem}\label{theo4.1}
Let $L \ge 2$. Then a function $f_{L} \in {\mathcal Y}_{n_{1}, \ldots , n_{L-1}}$ in $(\ref{DNN})$ resp.\ $(\ref{LL})$ can be represented as a \textnormal{CPL} spline function
\begin{equation}\label{4.4}
f_{L}(t) = (q_{1}^{(L)} t + q_{0}^{(L)}) + \sum_{k=1}^{N_{L}} \alpha_{k}^{(L)} \sigma(t - \xi_{k}^{(L)}), 
\end{equation}  
i.e., $f_{L} \in \Sigma_{N_{L}}$ with $N_{L} \le N_{L,\max}$, 
$\alpha_{k}^{(L)} \in {\mathbb R}$, $k=1, \ldots , N_{L}$, ordered breakpoints $\xi_{1}^{(L)} < \xi_{2}^{(L)} < \ldots < \xi_{N_{L}}^{(L)}$, and
$ q_{1}^{(L)} := \lim\limits_{t\to - \infty} f_{L}'(t)$, $q_{0}^{(L)} := \lim\limits_{t\to - \infty} (f_{L}(t)- q_{1}^{(L)} t)$. 
\end{theorem}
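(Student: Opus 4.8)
The plan is to argue by induction on the number of layers, but with a hypothesis slightly stronger than the statement itself: I track not only that each scalar pre-activation is a CPL spline, but that \emph{all} components $f_{\ell,1},\dots,f_{\ell,n_\ell}$ of the vector ${\mathbf F}_\ell$ in (\ref{DNN}) share a common finite set $B_\ell$ of breakpoints with $|B_\ell|\le P_\ell:=\prod_{i=1}^{\ell-1}(n_i+1)-1$ (empty product equal to $1$). The base case $\ell=1$ is immediate, since ${\mathbf F}_1(t)=t{\mathbf A}^{(1)}+{\mathbf b}^{(1)}$ is affine, so $B_1=\emptyset$ and $P_1=0$; equivalently one may start at $\ell=2$, where Lemma \ref{lemma1} shows each component of ${\mathbf F}_2$ lies in $\Sigma_{n_1}$ with the common breakpoint set $\{-b_k^{(1)}/a_k^{(1)}:a_k^{(1)}\neq 0\}$, so $|B_2|\le n_1=P_2$.

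For the inductive step from $\ell$ to $\ell+1$ I use ${\mathbf F}_{\ell+1}(t)={\mathbf A}^{(\ell+1)}\sigma({\mathbf F}_\ell(t))+t{\mathbf c}^{(\ell+1)}+{\mathbf b}^{(\ell+1)}$. By the induction hypothesis each $f_{\ell,j}$ may be written as an element of $\Sigma_{|B_\ell|}$ relative to the common knot set $B_\ell$ (with possibly vanishing coefficients, which is harmless since Lemma \ref{lemsigma} does not require nonzero coefficients). Hence Lemma \ref{lemsigma} applies and shows that $\sigma\circ f_{\ell,j}$ is a CPL spline whose breakpoints consist of the points of $B_\ell$ together with at most one new zero in each of the at most $|B_\ell|+1$ intervals into which $B_\ell$ cuts $\mathbb{R}$. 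Therefore the breakpoints of all $n_\ell$ functions $\sigma\circ f_{\ell,1},\dots,\sigma\circ f_{\ell,n_\ell}$ together lie in a set of cardinality at most $|B_\ell|+n_\ell(|B_\ell|+1)=(n_\ell+1)(|B_\ell|+1)-1\le(n_\ell+1)(P_\ell+1)-1=P_{\ell+1}$. Since each component $f_{\ell+1,j}$ is a linear combination of $\sigma(f_{\ell,1}),\dots,\sigma(f_{\ell,n_\ell})$ together with an added affine term $c_j^{(\ell+1)}t+b_j^{(\ell+1)}$, the breakpoint set of $f_{\ell+1,j}$ is contained in the union of the breakpoint sets of these summands; hence all $f_{\ell+1,j}$ share a common breakpoint set $B_{\ell+1}$ with $|B_{\ell+1}|\le P_{\ell+1}$, completing the induction.

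Applying this with $\ell=L-1$ and forming $f_{L}(t)=tc^{(L)}+b^{(L)}+{\mathbf A}^{(L)}\sigma({\mathbf F}_{L-1}(t))$ as in (\ref{Lrec}), the same counting shows that the breakpoints of $f_L$ lie in a set of size at most $P_L=\prod_{i=1}^{L-1}(n_i+1)-1=N_{L,\max}$; let $N_L\le N_{L,\max}$ be the number of genuine breakpoints, listed as $\xi_1^{(L)}<\dots<\xi_{N_L}^{(L)}$. As a finite composition of continuous, piecewise affine maps, $f_L$ is itself continuous and piecewise affine, hence affine on $(-\infty,\xi_1^{(L)})$; setting $q_1^{(L)}:=\lim_{t\to-\infty}f_L'(t)$ and $q_0^{(L)}:=\lim_{t\to-\infty}(f_L(t)-q_1^{(L)}t)$ for the slope and intercept of that leftmost piece, and $\alpha_k^{(L)}:=\partial_+f_L(\xi_k^{(L)})-\partial_-f_L(\xi_k^{(L)})$ for the jump of the derivative at each breakpoint, a standard telescoping argument shows that both sides of (\ref{4.4}) are CPL splines that coincide on $(-\infty,\xi_1^{(L)})$ and have identical derivative jumps at every $\xi_k^{(L)}$, hence are equal. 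This gives $f_L\in\Sigma_{N_L}$ with $N_L\le N_{L,\max}$ and the asserted formulas for $q_1^{(L)},q_0^{(L)}$.

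The main obstacle, and the reason for strengthening the induction hypothesis, is that treating $f_L=\sum_j a_{1,j}^{(L)}\sigma(f_{L-1,j})$ as a crude sum of $n_{L-1}$ unrelated CPL splines would only yield a breakpoint bound larger than $N_{L,\max}$ by a factor exponential in $L$. Sharpness comes precisely from the fact that the $f_{L-1,j}$ are not arbitrary: they are affine images of the same post-activation vector $\sigma({\mathbf F}_{L-2})$ and hence share their breakpoints, so only the at most $|B_{L-1}|+1$ new zeros per output node contribute beyond the common grid. The only technical care needed is in invoking Lemma \ref{lemsigma} relative to the common (rather than the minimal) knot set, which, as noted, is permitted.
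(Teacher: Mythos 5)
Your proposal is correct and follows essentially the same route as the paper: an induction over the layers in which all pre-activation components of a layer are viewed as CPL splines on a \emph{common} knot set, Lemma \ref{lemsigma} (equivalently, Theorem \ref{theo1} applied with $n_{1}=N_{L-1}$, $n_{2}=n_{L-1}$, which is what the paper does) supplies at most one new zero per interval of that grid, and the count $(n_{\ell}+1)(|B_{\ell}|+1)-1$ reproduces the bound $N_{L,\max}$. The only difference is presentational: you make the common-breakpoint property an explicit strengthened induction hypothesis and spell out the telescoping identification of $q_{1}^{(L)},q_{0}^{(L)},\alpha_{k}^{(L)}$, whereas the paper invokes Theorem \ref{theo1} per step and notes these facts in passing.
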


\begin{proof}
 For $L=2$ and $L=3$,  the representation (\ref{4.4}) follows from  Lemma \ref{lemma1} and Theorem \ref{theo1}, respectively.  Assume now, that  we have shown  (\ref{4.4}) for  functions in ${\mathcal Y}_{n_{1},n_{2}, \ldots, n_{L-2}}$. 
Then all functions $f_{L-1,j}$ in (\ref{flkl}) can be represented as
$$ f_{L-1,j}(t) =  (q^{(L-1)}_{1,j} t + q^{(L-1)}_{0,j}) + \sum_{k=1}^{N_{L-1}} \alpha_{j,k}^{(L-1)}\,  \sigma(t- \xi_{k}^{(L-1)}), $$
with $N_{L-1} \le N_{L-1,\max}=\prod\limits_{\ell=1}^{L-2}(n_{\ell}+1)-1$, and in particular, all functions $f_{L-1,j}$, $j =1, \ldots , n_{L-1}$, possess  the same  breakpoints $\xi_{k}^{(L-1)}$, $k=1, \ldots , N_{L-1}$, (which may not all be active).  Then (\ref{Lrec}) implies
$$ f_{L}(t) =  t c^{(L)}  + b^{(L)} + \sum_{j=1}^{n_{L-1}} a_{1,j}^{(L)} \sigma \Big((q^{(L-1)}_{1,j} t + q^{(L-1)}_{0,j}) + \sum_{k=1}^{N_{L-1}} \alpha_{j,k}^{(L-1)} \, \sigma(t- \xi_{k}^{(L-1)}) \Big) $$
and application of Theorem  \ref{theo1} (with $n_{1}= N_{L-1}$ and $n_{2}= n_{L-1}$)  yields the assertion, where 
$$ N_{L} \le (N_{L-1,\max} +1)(n_{L-1}+1)-1 = \Big(\prod\limits_{\ell=1}^{L-2}(n_{\ell} + 1) -1+1 \Big)(n_{L-1}+1) - 1 = N_{L,\max}. $$
The values for $q_{1}^{(L)}$ and $q_{0}^{(L)}$ follow directly from the observation that $\sum\limits_{k=1}^{N_{L}} \alpha_{k}^{(L)} \sigma(t - \xi_{k}^{(L)}) =0$ for $t < \xi_{1}^{(L)}$.
\end{proof}

More exactly, we can derive a recursion for the coefficients of the representation of $f_{L}$ in (\ref{4.4}).

\begin{corollary}\label{corLL}
Let $L \ge 2$. Let a function $f_{L} \in {\mathcal Y}_{n_{1}, \ldots , n_{L-1}}$ be given in the form 
$$ f_{L}(t) = t c^{(L)}  + b^{(L)} + \sum\limits_{j=1}^{n_{L-1}} a_{1,j}^{(L)} \sigma \Big( f_{L-1,j}(t) \Big) $$
with 
$$f_{L-1,j}(t)= (q^{(L-1)}_{1,j} t + q^{(L-1)}_{0,j}) + \sum_{\ell=1}^{N_{L-1}} \alpha_{j,\ell}^{(L-1)} \sigma(t - \xi_{\ell}^{(L-1)}), \qquad j=1, \ldots, n_{L-1} $$ 
and ordered breakpoints $\xi_{1}^{(L-1)} < \xi_{2}^{(L-1)} < \ldots < \xi_{N_{L-1}}^{(L-1)}$. 
 Then  $f_{L}(t)$ can be represented in the form
$$ f_{L}(t) = q_{1}^{(L)} t + q_{0}^{(L)} + \sum_{\ell=1}^{N_{L-1}} \beta_{\ell}^{(L)} \sigma(t - \xi_{\ell}^{(L-1)}) + \sum_{j=1}^{n_{L-1}} \sum_{\nu=0}^{N_{L-1}} \beta_{j,\nu}^{(L)} \, \sigma(t- \xi_{j,\nu}^{(L)}),
$$
where $\xi_{\ell}^{(L-1)}$ are the breakpoints given already by $f_{L-1,j}(t)$, 
and with
\begin{align*}
\xi_{j,\nu}^{(L)} &:= \left\{ \begin{array}{ll}
- \frac{\eta_{j,\nu}^{(L-1)}}{\mu_{j,\nu}^{(L-1)}} & \mu_{j,\nu}^{(L-1)} \neq 0 \\
-\infty & \text{otherwise} \end{array} \right. , \quad j=1, \ldots , n_{L-1}, \nu=0, \ldots , N_{L-1}, \\
\beta_{j,\nu}^{(L)} &:= a_{1,j}^{(L)} | \mu_{j,\nu}^{(L-1)}|  \, \chi_{(\xi_{\nu}^{(L-1)}, \xi_{\nu+1}^{(L-1)})} (\xi_{j,\nu}^{(L)}),  \quad j=1, \ldots , n_{L-1}, \nu=0, \ldots , N_{L-1}, \\
\beta_{\ell}^{(L)} &:= \sum_{j=1}^{n_{L-1}} a_{1,j}^{(L)} \Big( \alpha_{j,\ell}^{(L-1)} \, \chi_{(0,\infty)}(f_{L-1,j} (\xi_{\ell}^{(L-1)})) \\
 &  \qquad \qquad + 
[\sigma(\mu_{j,\ell}^{(L-1)}) + \sigma(-\mu_{j,\ell-1}^{(L-1)})] \chi_{\{0\}}(f_{L-1,j} (\xi_{\ell}^{(L-1)})) \Big)
, \qquad \ell=1, \ldots , N_{L-1}, \\
q_{1}^{(L)} &:= c^{(L)} - \sum_{j=1}^{n_{L-1}} a_{1,j}^{(L)} \sigma(-q_{1,j}^{(L-1)}), \\
q_{0}^{(L)} &:= b^{(L)} + \sum_{j=1}^{n_{L-1}} a_{1,j}^{(L)} \left(q_{0,j}^{(L-1)}\, \chi_{(-\infty, 0)}(q_{1,j}^{(L-1)}) + \sigma(q_{0,j}^{(L-1)}) \chi_{\{0 \}} (q_{1,j}^{(L-1)}) \right),
\end{align*}
where
$$ \mu_{j,\nu}^{(L-1)} := q_{1,j}^{(L-1)} + \sum_{r=1}^{\nu} \alpha_{j,r}^{(L-1)}, \quad \eta_{j,\nu}^{(L-1)} := q_{0,j}^{(L-1)} - \sum_{r=1}^{\nu} \alpha_{j,r}^{(L-1)} \xi_{r}^{(L-1)}. $$
\end{corollary}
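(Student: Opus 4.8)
The plan is to apply Lemma~\ref{lemsigma} separately to each of the $n_{L-1}$ component functions $f_{L-1,j}$ and then to form the outer affine combination $t c^{(L)}+b^{(L)}+\sum_{j=1}^{n_{L-1}} a_{1,j}^{(L)}\sigma(\,\cdot\,)$. This is precisely the argument used in the proof of Theorem~\ref{theo1}, which is the case $L=3$ of the present statement, so the corollary amounts to re-reading that proof in the layered notation.

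First I would fix $\xi_0^{(L-1)}:=-\infty$ and $\xi_{N_{L-1}+1}^{(L-1)}:=\infty$ and record the piecewise description of each $f_{L-1,j}$: by continuity and telescoping of slopes one gets $f_{L-1,j}(t)=\mu_{j,\nu}^{(L-1)}t+\eta_{j,\nu}^{(L-1)}$ on $(\xi_\nu^{(L-1)},\xi_{\nu+1}^{(L-1)})$ with $\mu_{j,\nu}^{(L-1)}$ and $\eta_{j,\nu}^{(L-1)}$ as defined in the statement, and with left and right derivatives $\mu_{j,\ell-1}^{(L-1)}$ and $\mu_{j,\ell}^{(L-1)}$ at $\xi_\ell^{(L-1)}$; this is verbatim Step~1 of the proof of Lemma~\ref{lemsigma} with $c=q_{1,j}^{(L-1)}$, $b=q_{0,j}^{(L-1)}$, $\alpha_r=\alpha_{j,r}^{(L-1)}$, $x_r=\xi_r^{(L-1)}$. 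Next I apply Lemma~\ref{lemsigma} to $\sigma\circ f_{L-1,j}$ for each $j$; it produces a representation of $\sigma(f_{L-1,j}(t))$ with leading coefficient $-\sigma(-q_{1,j}^{(L-1)})$, constant term $q_{0,j}^{(L-1)}\chi_{(-\infty,0)}(q_{1,j}^{(L-1)})+\sigma(q_{0,j}^{(L-1)})\chi_{\{0\}}(q_{1,j}^{(L-1)})$, old breakpoints $\xi_\ell^{(L-1)}$ carrying the coefficients $\tilde\alpha_{j,\ell}$ given by the $\chi$-formula of the lemma, and new breakpoints $\xi_{j,\nu}^{(L)}=-\eta_{j,\nu}^{(L-1)}/\mu_{j,\nu}^{(L-1)}$ (or $-\infty$) carrying the coefficients $|\mu_{j,\nu}^{(L-1)}|\chi_{(\xi_\nu^{(L-1)},\xi_{\nu+1}^{(L-1)})}(\xi_{j,\nu}^{(L)})$. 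Finally I would substitute these into $f_L(t)=tc^{(L)}+b^{(L)}+\sum_j a_{1,j}^{(L)}\sigma(f_{L-1,j}(t))$ and collect terms: the linear parts combine to $q_1^{(L)}t+q_0^{(L)}$ with the stated formulas, the coefficient of $\sigma(t-\xi_\ell^{(L-1)})$ becomes $\beta_\ell^{(L)}=\sum_j a_{1,j}^{(L)}\tilde\alpha_{j,\ell}$, and the coefficient of $\sigma(t-\xi_{j,\nu}^{(L)})$ becomes $a_{1,j}^{(L)}$ times the lemma's $\beta$; since $\xi_{j,\nu}^{(L)}=-\infty$ forces that coefficient to vanish, only genuine breakpoints remain and $f_L$ has the asserted form.

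I do not expect a genuine obstacle here: the statement is a bookkeeping consequence of Lemma~\ref{lemsigma}, and the ordering $\xi_1^{(L-1)}<\dots<\xi_{N_{L-1}}^{(L-1)}$ is exactly the hypothesis that lemma needs. The one point requiring care is the clean translation between the generic notation of Lemma~\ref{lemsigma} (where $\mu_k$ and $\eta_k$ are built from $c$, $b$ and the $\alpha_\ell$) and the present layered indexing, together with checking that the common breakpoints $\xi_\ell^{(L-1)}$ of all the $f_{L-1,j}$ remain first-level breakpoints of $f_L$ while the zeros $\xi_{j,\nu}^{(L)}$ of $f_{L-1,j}$ land in the prescribed subintervals $(\xi_\nu^{(L-1)},\xi_{\nu+1}^{(L-1)})$ — both of which are already guaranteed by Lemma~\ref{lemsigma}.
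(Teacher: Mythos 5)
Your proposal is correct and matches the paper's own (implicit) argument: the corollary is exactly the componentwise application of Lemma \ref{lemsigma} to the functions $f_{L-1,j}$, followed by forming the affine combination and collecting coefficients, which is the same computation carried out in the proof of Theorem \ref{theo1} in the layered notation. All the stated formulas for $q_{1}^{(L)}$, $q_{0}^{(L)}$, $\beta_{\ell}^{(L)}$ and $\beta_{j,\nu}^{(L)}$ come out of that substitution precisely as you describe.
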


\begin{remark} \label{rem41}
Instead of normalizing  ${\mathbf c}^{(\ell)} = \sign ({\mathbf c}^{(\ell)})$ we can also normalize iteratively in the recursive representation of $f_{L}$ such that $q_{1,j}^{(\ell)} = \sign(q_{1,j}^{(\ell)})$ for $j=1, \ldots , n_{\ell}$ and $\ell= 1, \ldots, L-1$. This normalization will be applied in Theorem \ref{theo4.2}.
\end{remark}

The recursive application of Corollary \ref{corLL} also provides us an algorithm  to compute the possible breakpoints of $f_{L}$, see Algorithm \ref{algo2}.

\begin{algorithm}[h!]\caption{Transfer from (\ref{LL}) to (\ref{4.4})}
\label{algo2}
\small{
\textbf{Input:} ${\mathbf n}=(n_{1}, \ldots , n_{L-1})^{T} \in {\mathbb N}^{L-1}$, $n_{0}=n_{L}=1$, ${\mathbf A}^{(\ell)}=(a_{j,k}^{(\ell)})_{j,k=1}^{n_{\ell}, n_{\ell-1}} \in {\mathbb R}^{n_{\ell} \times n_{\ell-1}}$, ${\mathbf b}^{(1)}\in\mathbb{R}^{n_1}$,\\
\null \qquad \quad \; ${\mathbf b}^{(\ell)},{\mathbf c}^{(\ell)} \in {\mathbb R}^{n_\ell}$, $\ell=2, \ldots , L$.

\begin{description}
\item{1.} for $j=1:n_{2}$ do \\
Use Algorithm \ref{alg1} to obtain from $c_j^{(2)}$, $b_j^{(2)}$, $(a_{j,k}^{(2)})_{k=1}^{n_1}$, $(a_k^{(1)})_{k=1}^{n_1}$, $(b_k^{(1)})_{k=1}^{n_1}$ the parameters
$q_{1,j}^{(2)}$, $q_{0,j}^{(2)}$, $N_{2}$, $(\alpha_{j,\nu}^{(2)})_{\nu=1}^{N_{2}}$, $(\xi_{\nu}^{(2)})_{\nu=1}^{N_{2}}$ such that 
$$ \textstyle f_{2,j}(t)= c_{j}^{(2)}t + b_{j}^{(2)} + \sum\limits_{k=1}^{n_{1}} a_{j,k}^{(2)} \sigma(a_{k}^{(1)}t + b_{k}^{(1)}) =
(q_{1,j}^{(2)} t + q_{0,j}^{(2)}) + \sum\limits_{\nu=1}^{N_{2}} \alpha_{j,\nu}^{(2)} \sigma(t-\xi_\nu^{(2)}). $$
\hspace*{-4mm} end(for($j$))
\item{2.} for $\ell=3:L$ do\\
Compute from given $(q_{1,j}^{(\ell-1)})_{j=1}^{n_{\ell-1}}$, $(q_{0,j}^{(\ell-1)})_{j=1}^{n_{\ell-1}}$, $(\alpha_{j,\nu}^{(\ell-1)})_{j,\nu=1}^{n_{\ell-1},N_{\ell-1}}$,
$(\xi_{\nu}^{(\ell-1)})_{\nu=1}^{N_{\ell-1}}$, 
${\mathbf A}^{(\ell)}$, ${\mathbf b}^{(\ell)}$, ${\mathbf c}^{(\ell)}$ the values 
%
\begin{align*}
\mu_{j,\nu}^{(\ell-1)} &:= \textstyle q_{1,j}^{(\ell-1)} + \sum\limits_{k=1}^{\nu} \alpha_{j,k}^{(\ell-1)}, \quad j=1, \ldots , n_{\ell-1}, \; \nu=1, \ldots , N_{\ell-1},\\
\eta_{j,\nu}^{(\ell-1)}&:= \textstyle q_{0,j}^{(\ell-1)} - \sum\limits_{k=1}^{\nu} \alpha_{j,k}^{(\ell-1)} \xi_{k}^{(\ell-1)}, \quad j=1, \ldots , n_{\ell-1}, \; \nu=1, \ldots , N_{\ell-1},\\
\xi_{j,\nu}^{(\ell)} &:=  \textstyle \left\{ \begin{array}{ll}
- \frac{\eta_{j,\nu}^{(\ell-1)}}{\mu_{j,\nu}^{(\ell-1)}} & \mu_{j,\nu}^{(\ell-1)} \neq 0 \\
-\infty & \text{otherwise} \end{array} \right. , \quad j=1, \ldots , n_{\ell-1}, \nu=0, \ldots , N_{\ell-1}, \\
\beta_{j,\nu,k}^{(\ell)} &:= \textstyle a_{k,j}^{(\ell)} | \mu_{j,\nu}^{(\ell-1)}|  \, \chi_{(\xi_{\nu}^{(\ell-1)}, \xi_{\nu+1}^{(\ell-1)})} (\xi_{j,\nu}^{(\ell)}),  \quad j=1, \ldots , n_{\ell-1}, \nu=0, \ldots , N_{\ell-1}, k=1,\ldots,n_\ell,\\
\beta_{\nu,k}^{(\ell)} &:= \textstyle \sum\limits_{j=1}^{n_{\ell-1}} a_{k,j}^{(\ell)} \Big( \alpha_{j,\nu}^{(\ell-1)} \, \chi_{(0,\infty)}(f_{\nu-1,j} (\xi_{\nu}^{(\ell-1)})) \\
&  \quad + 
[\sigma(\mu_{j,\nu}^{(\ell-1)}) + \sigma(-\mu_{j,\nu-1}^{(\ell-1)})] \chi_{\{0\}}(f_{\nu-1,j} (\xi_{\nu}^{(\ell-1)})) \Big)
, \quad \nu=1, \ldots , N_{\ell-1},  k=1,\ldots,n_\ell,\\
q_{1,k}^{(\ell)} &:= \textstyle c_k^{(\ell)} - \sum\limits_{j=1}^{n_{\ell-1}} a_{k,j}^{(\ell)} \sigma(-q_{1,j}^{(\ell-1)}), \quad k=1,\ldots,n_\ell,\\
q_{0,k}^{(\ell)} &:= \textstyle b_k^{(\ell)} + \sum\limits_{j=1}^{n_{\ell-1}} a_{k,j}^{(\ell)} \left(q_{0,j}^{(\ell-1)}\, \chi_{(-\infty, 0)}(q_{1,j}^{(\ell-1)}) + \sigma(q_{0,j}^{(\ell-1)}) \chi_{\{0 \}} (q_{1,j}^{(\ell-1)}) \right), \quad k=1,\ldots, n_\ell.
\end{align*}
Order the set of breakpoints 
$$(\{\xi_\nu^{(\ell-1)}: \nu=1, \ldots , N_{\ell-1}\} \cup \{\xi_{j,\nu}^{(\ell)}: \, j=1, \ldots, n_{\ell-1}, \, \nu=0, \ldots , N_{\ell-1} \}) \setminus \{ - \infty \}$$
 by size to obtain $\xi_1^{(\ell)}<\ldots<\xi_{N_\ell}^{(\ell)}$.\\
for $k=1:n_{\ell}$ do \\
\hspace*{3mm} Apply the same permutation to the coefficients $\beta_{j,\nu,k}^{(\ell)}$ (corresp.\ to $\xi_{j,\nu}^{(\ell-1)}$)
and $\beta_{\nu,k}^{(\ell)}$ (corresp.\\
\hspace*{3mm} to $\xi_{\nu}^{(\ell-1)}$)  to obtain $\alpha_{k,1}^{(\ell)},\ldots,\alpha_{k,N_\ell}^{(\ell)}$ (where coefficients corresp.\ to $-\infty$ are removed).\\
end(for($k$))\\
for $r=1:N_{\ell}$ do \\
\hspace*{3mm} if $(\alpha_{k,r}^{(\ell)})_{k=1}^{n_{\ell}} == {\mathbf 0}$ then remove $\xi_{r}^{(\ell)}$ from $(\xi_{\nu}^{(\ell)})_{\nu=1}^{N_{\ell}}$, remove the column $(\alpha_{k,r}^{(\ell)})_{k=1}^{n_{\ell}}$ from\\
\hspace*{3mm} $(\alpha_{k,\nu}^{(\ell)})_{k,\nu=1}^{n_{\ell},N_{\ell}}$, set $N_{\ell} := N_{\ell}-1$ end(if)\\
 end(for($r$))\\
\hspace*{-4mm} end(for($\ell$))
\end{description}
\textbf{Output:} $q_{1,1}^{(L)}$, $q_{0,1}^{(L)}$, $\xi_1^{(L)},\ldots,\xi_{N_L}^{(L)}$, $\alpha_{1,1}^{(L)},\ldots,\alpha_{1,N_L}^{(L)}$ to represent $f_{L}$ in (\ref{4.4}).
}
\end{algorithm}

\subsection{Construction of three hidden layer NN models with prescribed knots}
\label{sec3knots}

Similarly as before in the case of two hidden layers, 
 we will study the question, how to construct a function $f_{4} \in {\mathcal Y}_{n_{1},n_{2}, n_{3}}$  with 
$n_{2}n_{3} + n_{1}n_{2}+ n_{1}+ n_{2} + n_{3}$ predetermined breakpoints in ${\mathbb R}$.
This is the maximal number of predetermined breakpoints we can hope for, since, as shown in the beginning of Section \ref{sec:more}, the model ${\mathcal Y}_{n_{1},n_{2}, n_{3}}$ depends on $n_{2}n_{3} + n_{1}n_{2}+ n_{1}+ n_{2} + 2n_{3}+ 2$ real parameters, where the $n_{3}+2$ parameters represented by ${\mathbf A}^{(4)} \in {\mathbb R}^{1 \times n_{3}}$ and $c^{(4)}, \, b^{(4)} \in {\mathbb R}$ have no influence on the breakpoints of $f_{4}$.
Such a construction would therefore imply that  all real parameters involved  in ${\mathcal Y}_{n_{1},n_{2}, n_{3}}$ (after normalisation ${\mathbf A}^{(1)} = {\mathbf 1}$ and ${\mathbf c}^{(\ell)} = \sign({\mathbf c}^{(\ell)})$ or recursive normalization of ${\mathbf q}_{1}^{(\ell)}$ as in Remark \ref{rem41}) are independent.

\begin{theorem} \label{theo4.2}
Let $n_{1} \ge 1$, $n_{2} \ge 2$, $n_{3}\ge \log_{2}(n_{1}+(n_{1}+1)n_{2})$.
For $N=n_{2}n_{3}+ n_{1}n_{2}+n_{1} + n_{2}+ n_{3}$ and an arbitrary ordered  set of breakpoints,
$$ x_{1} < x_{2}  < \ldots 
 < x_{N},$$
there exists a function $f_{4} \in {\mathcal Y}_{n_{1},n_{2}, n_{3}}$ of the form $(\ref{LL})$  that possesses all these breakpoints $x_{\ell}$, $\ell=1, \ldots , N$. 
\end{theorem}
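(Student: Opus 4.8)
## Proof Proposal

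The plan is to build $f_4$ layer by layer, imitating the two-hidden-layer construction of Theorem~\ref{theomax1} but now nesting it once more. Recall from Theorem~\ref{theo4.1} and Corollary~\ref{corLL} that $f_4$ is obtained by applying the $\sigma$-composition operation of Lemma~\ref{lemsigma} twice: once to pass from the functions $f_{2,j} \in {\mathcal Y}_{n_1}$ to the functions $f_{3,m} \in {\mathcal Y}_{n_1,n_2}$ (one for each of the $n_3$ units of the third layer), and once more to combine the $f_{3,m}$ into $f_4$. The total count $N = n_2 n_3 + n_1 n_2 + n_1 + n_2 + n_3$ decomposes naturally: $n_1$ first-level breakpoints $x_k$ coming from ${\mathbf b}^{(1)}$; then $n_1 n_2 + n_2 = n_2(n_1+1)$ second-level breakpoints (the zeros $x_{j,\nu}$ of the $f_{2,j}$, $j=1,\dots,n_2$, $\nu=0,\dots,n_1$), which are shared by all the $f_{3,m}$; and finally $n_2 n_3 + n_3 = n_3(n_2+1)$ third-level breakpoints, namely for each third-layer unit $m$ the zeros of $f_{3,m}$, one in each of the $n_2(n_1+1)+1$ gaps — wait, that count is $n_3\big(n_2(n_1+1)+1\big)$, which is too many; so in fact only a sub-selection of gaps is used. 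I would therefore first partition the prescribed set $x_1<\dots<x_N$ into a coarse block of $n_1$ knots, an intermediate block of $n_2(n_1+1)$ knots placed so that exactly $n_2$ of them fall strictly between consecutive coarse knots (and in the two infinite end-gaps), and a fine block of $n_3(n_2+1)$ knots, to be assigned as zeros of the $f_{3,m}$.

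First I would fix ${\mathbf A}^{(1)} = {\mathbf 1}$ and choose ${\mathbf b}^{(1)}$ so that the first-level breakpoints are the coarse block $\{x_k\}_{k=1}^{n_1}$. Next, applying Theorem~\ref{theomax1} with $(n_1,n_2)$ as there, I would determine ${\mathbf c}^{(2)} = ((-1)^{j+1})_j$, ${\mathbf b}^{(2)}$, and ${\mathbf A}^{(2)}$ so that each $f_{2,j}$ has exactly the prescribed $n_1+1$ intermediate knots assigned to row $j$ as its zeros $x_{j,\nu}\in(x_\nu,x_{\nu+1})$, with alternating slopes $\mu_{j,\nu}$ given by the product formula \eqref{rec1}. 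At this point the inner CPL spline $g_j(t) := \sigma(f_{2,j}(t))$ is a fixed element of $\Sigma_{N'}$ with $N' = n_1 + n_2(n_1+1)$ active breakpoints (here using that the construction in Theorem~\ref{theomax1} makes all of $\{x_k\}\cup\{x_{j,\nu}\}$ active), and its shape — in particular the list of its slopes on each piece — is completely determined. The remaining freedom is the matrix ${\mathbf A}^{(3)}\in{\mathbb R}^{n_3\times n_2}$, the sign vector ${\mathbf c}^{(3)} = \sign({\mathbf c}^{(3)})$, ${\mathbf b}^{(3)}\in{\mathbb R}^{n_3}$, plus the irrelevant output parameters ${\mathbf A}^{(4)}, c^{(4)}, b^{(4)}$. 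I would then set $f_{3,m}(t) = \sum_{j=1}^{n_2} a^{(3)}_{m,j}\, g_j(t) + c^{(3)}_m t + b^{(3)}_m$ and, exactly as in the $L=3$ step, choose for each $m$ the parameters $(a^{(3)}_{m,j})_j$, $c^{(3)}_m = (-1)^{m+1}$, $b^{(3)}_m$ so that $f_{3,m}$ has a singular zero at each of the $n_2+1$ fine knots assigned to the $m$-th unit, located in the appropriate gaps between consecutive breakpoints of $g_j$ (which, crucially, are the same for all $j$). Since $g_j$ already carries $N'$ breakpoints and we only need $n_2+1$ new ones per unit, we have wide latitude in which gaps to use; the counting condition $n_3 \ge \log_2(n_1+(n_1+1)n_2) = \log_2 N'$ (roughly) guarantees there are enough gaps relative to the number of sign patterns we must realize — this is exactly the "decaying/compatible width" style hypothesis that makes the fine-level knots all simultaneously activatable.

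The main obstacle I anticipate is the second obstacle at the third level: whereas in Theorem~\ref{theomax1} the inner functions $\sigma(t-x_k)$ had slope-$0$/slope-$1$ structure that made the linear algebra for ${\mathbf A}^{(2)}$ trivially solvable row-by-row, here the inner building blocks $g_1,\dots,g_{n_2}$ are genuinely distinct CPL splines, so forcing $f_{3,m} = \sum_j a^{(3)}_{m,j} g_j + (\text{affine})$ to vanish at $n_2+1$ prescribed abscissae amounts to solving an $(n_2+1)\times(n_2+1)$ linear system (the $n_2$ coefficients $a^{(3)}_{m,j}$ together with $b^{(3)}_m$) with a generalized-Vandermonde-type matrix whose rows are $(g_1(x), \dots, g_{n_2}(x), 1)$ at the chosen points $x$. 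I would need to argue this matrix is nonsingular — using that the $g_j$ are linearly independent together with $1$ (because their slope sequences $(\mu_{j,\nu})_\nu$ are pairwise distinct and distinct from constants, by the product formula with sign $(-1)^j$) and that one can always perturb/choose the locations of the fine knots to avoid the measure-zero bad set — and then separately verify that the resulting zero of $f_{3,m}$ in each chosen gap is genuinely \emph{singular} (a sign change, not a tangency), which follows if ${\mathbf c}^{(3)}$ and the $a^{(3)}_{m,j}$ are chosen with the alternating-sign bookkeeping of Corollary~\ref{cor3} so that consecutive slopes of $f_{3,m}$ alternate. The rest — reading off $q_1^{(4)}, q_0^{(4)}$ via the $t\to-\infty$ limits as in Corollary~\ref{corLL}, and noting that all $N$ breakpoints are then active, hence that ${\mathcal Y}_{n_1,n_2,n_3}$ has the claimed number of independent real parameters after normalization — is routine and parallels the $L=3$ case verbatim.
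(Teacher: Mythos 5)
Your setup (the three-block decomposition of the $N$ prescribed points and the reuse of Theorem \ref{theomax1} for the first two hidden layers) matches the paper, but the third-layer step has a genuine gap. You let the fine knots sit in essentially arbitrary gaps of the common breakpoint set of the $g_j=\sigma(f_{2,j})$, and then you must solve, for each unit $m$, an $(n_2+1)\times(n_2+1)$ system with rows $(g_1(x),\dots,g_{n_2}(x),1)$ to force $f_{3,m}$ to vanish at the assigned points. You do not prove this matrix is nonsingular, and your proposed remedy --- perturbing or re-choosing the fine knot locations to avoid a measure-zero bad set --- is not available: the breakpoints are \emph{arbitrarily prescribed}, so no perturbation is allowed. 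The paper avoids the linear system altogether by a placement trick you do not use: all third-level knots are put in the interval $(-\infty,x_1^{(1)})$, interlaced with the second-level knots $x_{j,0}^{(2)}$ as in (\ref{points}). On that interval each $f_{3,r}$ reduces to $tq_{1,r}^{(3)}+q_{0,r}^{(3)}+\sum_j a_{r,j}^{(3)}\sigma(t-x_{j,0}^{(2)})$, i.e.\ exactly the shallow situation of Section \ref{sec:break}, so the coefficients $a_{r,j}^{(3)}$ are produced by the same explicit alternating-ratio recursion (see (\ref{arj3})), row by row, with no invertibility issue.

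The second, and more serious, omission is your final sentence declaring that the activity of all $N$ breakpoints in $f_4$ is ``routine and parallels the $L=3$ case verbatim.'' It is not: the outer $\sigma$ can deactivate a first- or second-level breakpoint $\xi$ whenever $f_{3,r}(\xi)<0$ for every $r$, and preventing this is the actual work of the paper's proof (its step 4). There one exploits that flipping the sign $\epsilon_r$ flips the sign of $f_{3,r}$ at all old breakpoints, chooses the $\epsilon_r$ greedily so that each old breakpoint has at least one $r$ with $f_{3,r}\ge 0$ there (with positive kink data), and this halving argument is precisely where the hypothesis $n_3\ge\log_2\bigl(n_1+(n_1+1)n_2\bigr)$ enters --- not, as you suggest, as a count of available gaps. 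Finally one must pick ${\mathbf A}^{(4)}$ not orthogonal to any of the resulting nonzero vectors ${\mathbf S}^{(\ell)}$, ${\mathbf S}^{(\nu,j)}$ so that no coefficient in the spline representation of $f_4$ cancels. Without this sign-selection argument (or a substitute), your construction does not guarantee that the coarse and intermediate breakpoints survive in $f_4$, so the proof is incomplete even if your Vandermonde step could be repaired.
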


\begin{proof} We show the assertion by constructing a function $f_{4}(t)$ of the form 
$$
 f_{4}(t)
= \textstyle t c^{(4)} + b^{(4)}+ \!\sum\limits_{r=1}^{n_3 }a_r^{(4)} \sigma \!\!\left( \sum\limits_{j=1}^{n_{2}} a^{(3)}_{r,j} \sigma\!\left( \sum\limits_{k=1}^{n_{1}} a^{(2)}_{j,k} \, \sigma(a^{(1)}_{k} t + b^{(1)}_{k}) +t c^{(2)}_{j} \!\!+ b^{(2)}_{j} \!\right) \!\!+ t c_r^{(3)} \!\! + b_r^{(3)}\!\right) $$
with these $N$ prescribed breakpoints.

Let the given  breakpoints be  reordered  and denoted as
\begin{align} \nonumber
& x_{1}^{(1)} < x_{2}^{(1)} < \ldots < x_{n_{1}}^{(1)} \\
\label{points}
& x_{\nu}^{(1)} < x_{1,\nu}^{(2)} < x_{2,\nu}^{(2)} < \ldots x_{n_{2},\nu}^{(2)} < x_{\nu+1}^{(1)}, \qquad \nu=0, \ldots , n_{1}, \\
\nonumber
& x_{j,0}^{(2)}<x_{1,j}^{(3)} < x_{2,j}^{(3)} < \ldots < x_{n_3,j}^{(3)}<x_{j+1,0}^{(2)},\quad j=0,\dots,n_2,
\end{align}
where $x_{0}^{(1)} = x_{0,0}^{(2)}:=-\infty$, $x_{n_{1}+1}^{(1)} := \infty$ and $x_{n_{2}+1,0}^{(2)} := x_{1}^{(1)}$, see Figure \ref{figknots}.

\begin{figure}[h]
\begin{center}
	\includegraphics[scale=0.3]{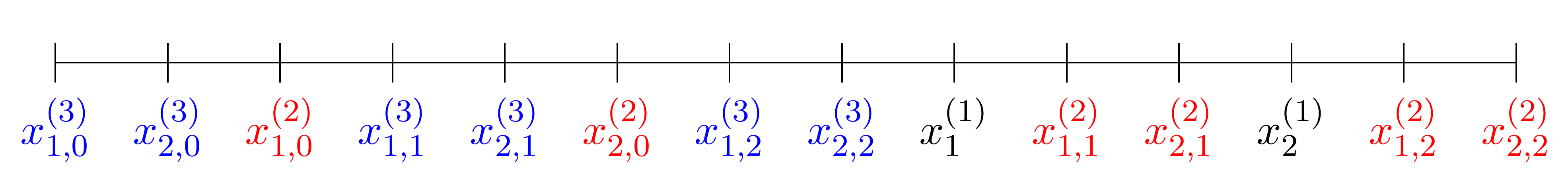}
\end{center}
\caption{\small Ordering of breakpoints for the first, second and third layer for $n_{1}=n_{2}=n_{3}=2$.}
\label{figknots}
\end{figure}

\noindent
1. We observe that $f_{4}(t) = t c^{(4)} + b^{(4)} + \sum\limits_{r=1}^{n_{3}} a_r^{(4)} \sigma(f_{3,r}(t))$, 
where each function $f_{3,r}(t)$, $r=1, \ldots , n_{3}$ is given by 
\begin{equation}\label{f3r1} f_{3,r}(t) =  \textstyle t c_{r}^{(3)} + b_{r}^{(3)} + \sum\limits_{j=1}^{n_{2}} a_{r,j}^{(3)} \sigma\left( \sum\limits_{k=1}^{n_{1}} a^{(2)}_{j,k} \, \sigma \left( t - x_{k}^{(1)} \right) + t c_{j}^{(2)} + b^{(2)}_{j} \right).
\end{equation}
We apply Theorem \ref{theomax1}  and choose
with $c_{j}^{(2)} := (-1)^{j+1}$, $j=1, \ldots, n_{2}$, $\mu_{j,0}^{(2)} := c_{j}^{(2)}$, 
\begin{align*} 
\mu_{j,k}^{(2)} &:= \textstyle c_{j}^{(2)} \prod\limits_{\ell=1}^{k} \Big( \frac{x_{\ell}^{(1)} - x_{j,\ell-1}^{(2)}}{x_{\ell}^{(1)} - x_{j,\ell}^{(2)}} \Big), \quad j=1, \ldots , n_{2}, k=1, \ldots, n_{1}, \\
a_{j,k}^{(2)} &:= \mu_{j,k}^{(2)} - \mu_{j,k-1}^{(2)} \quad j=1, \ldots , n_{2}, k=1, \ldots, n_{1}, \\
b_{j}^{(2)} &:= -x_{j,0}^{(2)} c_{j}^{(2)} = x_{j,0}^{(2)} (-1)^{j}, \qquad j=1, \ldots , n_{2}, \\
(a_{r,j}^{(3)})_{j=1}^{n_{2}} &\in {\mathbb R}^{n_{2}} \quad \text{with} \quad \sign (a_{r,j}^{(3)}) = \epsilon_{r} \, (-1)^{j}, \quad c_{r}^{(3)}, \, b_{r}^{(3)} \in {\mathbb R},
\end{align*}
where $\epsilon_{r} \in \{ -1,1\}$ and $a_{r,j}^{(3)}$, $c_{r}^{(3)}$, $b_{r}^{(3)}$ will be fixed later. Then the function $f_{3,r}$ in (\ref{f3r1}) possesses  the $n_{1}n_{2}+ n_{1}+n_{2}$ breakpoints $x_{\ell}^{(1)}$, $\ell=1, \ldots , n_{1}$, and  $x_{j,\nu}^{(2)}$, $\nu=0, \ldots , n_{1}$, $j=1, \ldots, n_{2}$.

More exactly, $f_{3,r}$ in (\ref{f3r1}) can be represented as
\begin{equation}\label{f3r}  f_{3,r}(t) = t q_{1,r}^{(3)} + q_{0,r}^{(3)}+ \sum_{\ell=1}^{n_{1}} \alpha_{r,\ell} {\sigma}(t-x_{\ell}^{(1)}) + 
\sum_{\nu=0}^{n_{1}} \sum_{j=1}^{n_{2}} \alpha_{r,j,\nu}{\sigma}(t- x_{j,\nu}^{(2)}), 
\end{equation}
where as in the proof of Theorem \ref{theomax1},
\begin{align*} 
\alpha_{r,j,\nu}  &=  \textstyle a_{r,j}^{(3)} |\mu_{j,\nu}^{(2)}| 
= a_{r,j}^{(3)} \left| \prod\limits_{\ell=1}^{\nu} \left( \frac{x_{\ell}^{(1)} - x_{j,\ell-1}^{(2)}}{x_{\ell}^{(1)} - x_{j,\ell}^{(2)}} \right) \right| \neq 0 \qquad \nu=0, \ldots , n_{1}, \, j=1, \ldots , n_{2}, \\
\alpha_{r,\ell} &= (-1)^{\ell+1}  \textstyle\sum\limits_{\substack{j=1\\ j+\ell \, \text{even}}}^{n_{2}} a_{r,j}^{(3)}  \Big( \frac{x_{j,\ell}^{(2)} - x_{j,\ell-1}^{(2)}}{x_{\ell}^{(1)} - x_{j,\ell}^{(2)}} \Big) \prod\limits_{k=1}^{\ell-1} \Big( \frac{x_{k}^{(1)} - x_{j,k-1}^{(2)}}{x_{k}^{(1)} - x_{j,k}^{(2)}} \Big)\neq 0 \quad \ell=1, \ldots , n_{1}, 
\end{align*}
i.e., all $(n_{1}+1)(n_{2}+1)-1$ breakpoints are active in $f_{3,r}$, $r=1, \ldots , n_{3}$, with $\sign(\alpha_{r,j,\nu}) = (-1)^{j} \epsilon_{r}$ and $\sign(\alpha_{r,\ell}) = (-1)^{\ell+1} \epsilon_{r}$.  

2. We observe now that in $(-\infty,x_{1}^{(1)})$ the functions $f_{3,r}$ in (\ref{f3r}) are of the form 
$$ f_{3,r}(t) = t q_{1,r}^{(3)} + q_{0,r}^{(3)}+ \sum_{j=1}^{n_{2}} \alpha_{r,j,0}{\sigma}(t- x_{j,0}^{(2)}) = t q_{1,r}^{(3)} + q_{0,r}^{(3)}+ \sum_{j=1}^{n_{2}} a_{r,j}^{(3)} \, {\sigma}(t- x_{j,0}^{(2)}), $$
where  we have used that $\alpha_{r,j,0}=a_{r,j}^{(3)} |\mu_{j,0}^{(2)}| =a_{r,j}^{(3)}$, and where 
\begin{align} \label{q1r}
q_{1,r}^{(3)} &= c_{r}^{(3)} - \sum_{j=1}^{n_{2}} a_{r,j}^{(3)} \sigma(-c_{j}^{(2)}) = c_{r}^{(3)} - \sum\limits_{\substack{j=1\\ j \, \text{even}}}^{n_{2}} a_{r,j}^{(3)}, \\
\label{q0r}
q_{0,r}^{(3)} &= b_{r}^{(3)} + \sum_{j=1}^{n_{2}} a_{r,j}^{(3)}  b_{j}^{(2)} \, \chi_{(-\infty,0)}(c_{j}^{(2)}) =  b_{r}^{(3)} +  \sum\limits_{\substack{j=1\\ j \, \text{even}}}^{n_{2}} a_{r,j}^{(3)} \, x_{j,0}^{(2)}. 
\end{align}
Thus, in $(-\infty,x_{1}^{(1)})$, we have $f_{3,r} \in {\mathcal Y}_{n_{2}}$  with breakpoints $x_{j,0}^{(2)}$, $j=1, \ldots , n_{2}$, and we can apply our observations from Section \ref{sec:break} to construct $f_{4}(t)$ possessing also the third-layer breakpoints  $x_{r,j}^{(3)}$, $r=1, \ldots , n_{3}$, $j=0, \ldots , n_{2}$ in $(-\infty,x_{1}^{(1)})$. To achieve this goal, we
 need to choose the parameters $c_{r}^{(3)}$, $b_{r}^{(3)}$ and $a_{r,j}^{(3)}$ to ensure that $f_{3,r}(x_{r,j}^{(3)})=0$ for $j=0, \ldots , n_{2}$. As in Section 2, we have 
$$ f_{3,r}(t) = \mu_{r,j}^{(3)} t + \eta_{r,j}^{(3)} \quad \text{for} \quad t \in [x_{j,0}^{(2)}, x_{j+1,0}^{(2)}]$$
with 
\begin{equation}\label{mjk} \textstyle \mu_{r,j}^{(3)} = q_{1,r}^{(3)} + \sum\limits_{\ell=1}^{j} a_{r,\ell}^{(3)}, \qquad  \eta_{r,j}^{(3)} = q_{0,r}^{(3)} - \sum\limits_{\ell=1}^{j} a_{r,\ell}^{(3)} x_{\ell,0}^{(2)}. 
\end{equation}
We choose now $q_{1,r}^{(3)}= \epsilon_{r}$, $q_{0,r}^{(3)}= -\epsilon_{r} x_{r,0}^{(3)}$, $\mu_{r,0}^{(3)} = q_{1,r}^{(3)} = \epsilon_{r}$, 
$$ \mu_{r,j}^{(3)} = \textstyle \mu_{r, j-1}^{(3)} \Big( \frac{x_{j,0}^{(2)} - x_{r,j-1}^{(3)}}{x_{j,0}^{(2)}- x_{r,j}^{(3)}} \Big), \quad r=1, \ldots , n_{3}, j=1, \ldots, n_{2}, $$
and
$$ a_{r,j}^{(3)} := \mu_{r,j}^{(3)} - \mu_{r,j-1}^{(3)}, \quad r=1, \ldots , n_{3}, j=1, \ldots, n_{2}. $$
Then, by (\ref{q1r})--(\ref{q0r}) it follows that
\begin{align*}
c_{r}^{(3)} = \epsilon_{r} + \sum\limits_{\substack{j=1\\ j \, \text{even}}}^{n_{2}} a_{r,j}^{(3)}, \qquad 
b_{r}^{(3)} = -\epsilon_{r} x_{r,0}^{(3)} - \sum\limits_{\substack{j=1\\ j \, \text{even}}}^{n_{2}} a_{r,j}^{(3)} \, x_{j,0}^{(2)}.
\end{align*}
With this parameter choice we indeed obtain the desired breakpoints $x_{r,j}^{(3)}$.
For $j=0$ we find $$f_{3,r}(x_{r,0}^{(3)}) = \mu_{r,0}^{(3)} x_{r,0}^{(3)} + \eta_{r,0}^{(3)} = q_{1,r}^{(3)}x_{r,0}^{(3)} + q_{0,r}^{(3)} =0. $$
Further, with the recursions 
$$ \mu_{r,j}^{(3)} (x_{j,0}^{(2)} - x_{r,j}^{(3)}) = \mu_{r,j-1}^{(3)} (x_{j,0}^{(2)}- x_{r,j-1}^{(3)}), 
\quad \eta_{r,j}^{(3)} = \eta_{r,j-1}^{(3)} - a_{r,j}^{(3)} x_{j,0}^{(2)}$$
we inductively obtain from $\mu_{r,j-1}^{(3)}x_{r,j-1}^{(3)} + \eta_{r,j-1}^{(3)}=0$ that 
\begin{align*}
\mu_{r,j}^{(3)} x_{r,j}^{(3)} + \eta_{r,j}^{(3)} &= \mu_{r,j}^{(3)}x_{r,j}^{(3)} +\eta_{r,j-1}^{(3)} - a_{r,j}^{(3)} x_{j,0}^{(2)}= \mu_{r,j}^{(3)}x_{r,j}^{(3)}-\mu_{r,j-1}^{(3)} x_{r,j-1}^{(3)}- a_{r,j}^{(3)} x_{j,0}^{(2)}\\
&=\mu_{r,j}^{(3)}x_{r,j}^{(3)}-\mu_{r,j-1}^{(3)}x_{r,j-1}^{(3)}-(\mu_{r,j}^{(3)}- \mu_{r,j-1}^{(3)}) x_{j,0}^{(2)}=0.
\end{align*}
In particular, the parameters $a_{r,j}^{(3)}$ are explicitly given by
\begin{equation}\label{arj3}
a_{r,j}^{(3)} = \textstyle \epsilon_{r} \left( \frac{x_{r,j}^{(3)} - x_{r,j-1}^{(3)}}{x_{j,0}^{(2)} - x_{r,j}^{(3)}} \right) \, \prod\limits_{s=0}^{j-1} \left( \frac{x_{s,0}^{(2)} - x_{r,s-1}^{(3)}}{x_{s,0}^{(2)} - x_{r,s}^{(3)}} \right),
\end{equation}
and hence $\sign(a_{r,j}^{(3)}) = \epsilon_{r} (-1)^{j}$.
Thus, as in Corollary 3.7, the obtained components $a_{r,j}^{(3)}$ have alternating sign with regard to $j$. 

3. 
Corollary \ref{corLL} implies for 
$ f_{4}(t) = t c^{(4)} + b^{(4)} + \sum\limits_{r=1}^{n_{3}} a_r^{(4)} \sigma(f_{3,r}(t))$ with $f_{3,r}(t)$ in (\ref{f3r}) that 
$$ f_{4}(t) = tq_{1}^{(4)}+ q_{0}^{(4)} + \sum_{\ell=1}^{n_{1}n_{2}+n_{1}+n_{2}} \beta_{\ell}^{(4)} \, \sigma(t - \xi_{\ell}^{(3)}) + \sum_{r=1}^{n_{3}} \sum_{\nu=0}^{n_{1}n_{2}+n_{1}+n_{2}} \beta_{r,\nu}^{(4)} \,  \sigma(t-\xi_{r,\nu}^{(4)}), $$
where  $\xi_{(n_{2}+1)\ell}^{(3)} = x_{\ell}^{(1)}, \, \ell=1, \ldots , n_{1}$,  and $\xi_{(n_{2}+1)\nu+j}^{(3)} = x_{j,\nu}^{(2)}$, $j=1, \ldots, n_{2}, \, \nu=0, \ldots , n_{1}$ are the breakpoints of $f_{3,r}$ constructed in the preceding two layers.
Further, the set $\{\xi_{r,\nu}^{(4)}, \, r=1, \ldots , n_{3}, \, \nu=0, \ldots , n_{1}n_{2}+n_{1}+n_{2} \}$ contains the (sub)set $\{x_{r,j}^{(3)}, \, r=1, \ldots , n_{3}, \, j=0, \ldots , n_{2}\}$ by construction. 

4.
We finally check, whether all desired  breakpoints are active in $f_{4}$.
We obtain for $\nu=0, \ldots , n_{2}$, $r=1, \ldots , n_{3}$  that the breakpoints $\xi_{r,\nu}^{(4)} = x_{r,j}^{(3)} \in (x_{j,0}^{(2)}, x_{j+1,0}^{(2)})$ correspond to the  coefficients 
\begin{align*} 
\beta_{r,\nu}^{(4)} &= a_{r}^{(4)} |\mu_{r,\nu}^{(3)}| \neq 0,
\end{align*}
where $({\mathbf A}^{(4)})^{T} = (a_{r}^{(4)})_{r=1}^{n_{3}} \in {\mathbb R}^{n_{3}}$ is chosen  such that $a_{r}^{(4)} \neq 0$.
Next, we show that  we can always choose $(\epsilon_{r})_{r=1}^{n_{3}}$ and ${\mathbf A}^{(4)}$ such that the coefficients $\beta_{(n_{2}+1)\ell}^{(4)}$ corresponding to $x_{\ell}^{(1)}$ and $\beta_{\nu(n_{2}+1)+j}^{(4)}$ corresponding to $x_{j,\nu}^{(2)}$ do not vanish. As in Corollary \ref{corLL} we have
\begin{align*}
\beta_{(n_{2}+1)\ell}^{(4)} &= {\mathbf A}^{(4)} {\mathbf S}^{(\ell)} 
:=\textstyle \!\!\!\!\!\!\sum\limits_{\substack{r=1\\f_{3,r}(x_{\ell}^{(1)}) >0}}^{n_{3}} \!\!\!\!\!\! a_{r}^{(4)} \alpha_{r,\ell} + \!\!\!\!\!\! \sum\limits_{\substack{r=1\\f_{3,r}(x_{\ell}^{(1)}) =0}}^{n_{3}} \!\!\!\!\!\! a_{r}^{(4)} \left(\sigma( \partial_{+}f_{3,r}(x_{\ell}^{(1)}) )+ \sigma( -\partial_{-}f_{3,r}(x_{\ell}^{(1)})) \right),\\
\beta_{\nu(n_{2}+1)+j}^{(4)} &= {\mathbf A}^{(4)} {\mathbf S}^{(\nu,j)} 
:= \textstyle \!\!\!\!\!\!\! \sum\limits_{\substack{r=1\\f_{3,r}(x_{j,\nu}^{(2)}) >0}}^{n_{3}} \!\!\!\!\!\!\! a_{r}^{(4)} \alpha_{r, j,\nu} + \!\!\!\!\!\!\! \sum\limits_{\substack{r=1\\f_{3,r}(x_{j,\nu}^{(2)}) =0}}^{n_{3}} \!\!\!\!\!\!\! a_{r}^{(4)} \left( \sigma( \partial_{+}f_{3,r}(x_{j,\nu}^{(2)}) )+ \sigma( -\partial_{-}f_{3,r}(x_{j,\nu}^{(2)})) \right),
\end{align*}
where, as in Section 3, $\partial_{+}f_{3,r}(x)$ and  $\partial_{-}f_{3,r}(x)$ denote the slopes of $f_{3,r}(x)$ on the right and left side of $x$, 
and where the vectors ${\mathbf S}^{(\ell)}$ and ${\mathbf S}^{(\nu,j)}$ of length $n_{3}$ contain the entries $\alpha_{r,\ell}$ (resp. $\alpha_{r, j,\nu}$) 
for positive function values, $\sigma( \partial_{+}f_{3,r}(x_{\ell}^{(1)}) )+ \sigma( -\partial_{-}f_{3,r}(x_{\ell}^{(1)}))$ 
(resp. $\sigma( \partial_{+}f_{3,r}(x_{j,\nu}^{(2)}) )+ \sigma( -\partial_{-}f_{3,r}(x_{j,\nu}^{(2)}))$) for vanishing function values, and zero entries for negative function values.

We obtain from (\ref{f3r})  with our settings that
\begin{align*}
& f_{3,r}(x_{\ell}^{(1)}) = \textstyle \epsilon_{r} (x_{\ell}^{(1)} - x_{r,0}^{(3)}) + \sum\limits_{k=1}^{\ell-1} \alpha_{r,k} (x_{\ell}^{(1)} - x_{k}^{(1)}) + \! \sum\limits_{\nu=0}^{\ell-1} \sum\limits_{j=1}^{n_{2}} \alpha_{r,j,\nu} (x_{\ell}^{(1)} - x_{j,\nu}^{(2)}) \\
&= \textstyle \epsilon_{r} \left(\! |x_{\ell}^{(1)} - x_{r,0}^{(3)}| + \!\sum\limits_{k=1}^{\ell-1} (-1)^{k+1}|\alpha_{r,k}| |x_{\ell}^{(1)} - x_{k}^{(1)}| + \!\sum\limits_{\nu=0}^{\ell-1} \sum\limits_{j=1}^{n_{2}} (-1)^{j} |\alpha_{r,j,\nu}| |x_{\ell}^{(1)} - x_{j,\nu}^{(2)}| \!\right)
\end{align*}
such that a switch of $\epsilon_{r}$ from $1$ to $-1$ changes the sign of the function value $f_{3,r}(x_{\ell}^{(1)})$.  
Moreover, for function values $f_{3,r}(x_{\ell}^{(1)})=0$ we have that 
 $\sigma( \partial_{+}f_{3,r}(x_{\ell}^{(1)}) )+ \sigma( -\partial_{-}f_{3,r}(x_{\ell}^{(1)}))$ or $\sigma( -\partial_{+}f_{3,r}(x_{\ell}^{(1)}) )+ \sigma( \partial_{-}f_{3,r}(x_{\ell}^{(1)}))$ is positive.
Similarly, also  $\sign (f_{3,r}(x_{j,\nu}^{(2)}))$ is switched by changing $\sign (\epsilon_{r})$.

It suffices now to show that the vectors ${\mathbf S}^{(\ell)}$ and ${\mathbf S}^{(\nu,j)}$ are all nonzero vectors, then we can always find a vector ${\mathbf A}^{(4)}$ that is not orthogonal to any of these vectors, i.e., such that all coefficients 
$\beta_{(n_{2}+1)\ell}^{(4)}={\mathbf A}^{(4)} {\mathbf S}^{(\ell)}$ and $\beta_{\nu(n_{2}+1)+j}^{(4)}={\mathbf A}^{(4)} {\mathbf S}^{(\nu,j)}$ do not vanish.
We will ensure that property by choosing the vector ${\mathbf \epsilon} = (\epsilon_{1}, \ldots , \epsilon_{n_{3}}) \in \{ -1,1\}^{n_{3}} $ properly. 
We proceed as follows to fix $\epsilon_{r}$. We choose $\epsilon_{1}$ such that  at least half of the $(n_{1}+1)(n_{2}+1)-1$ function values $f_{3,1}(\xi_{\ell}^{(3)})$ are positive (or zero  with corresponding positive entry in ${\mathbf S}^{(\ell)}$ resp. ${\mathbf S}^{(\nu,j)}$). Next we choose $\epsilon_{2}$ such that at least half of the remaining function values $f_{3,2}(\xi_{\ell}^{(3)})$ are positive (or zero  with corresponding positive entry in ${\mathbf S}^{(\ell)}$ resp. ${\mathbf S}^{(\nu,j)}$), where we had $f_{3,1}(\xi_{\ell}^{(3)}) \le 0$. We repeat this procedure to get for any $\xi_{\ell}^{(3)}$ at least one positive (or zero) value $f_{3,r}(\xi_{\ell}^{(3)})$ and can stop in the worst case after $\lceil\log_{2}(n_{1}n_{2}+ n_{1}+n_{2})\rceil$ steps. 
 
The constructed function $f_{4}$ therefore possesses the wanted $n_{1}n_{2} + n_{2}n_{3}+ n_{1}+ n_{2}+ n_{3}$ breakpoints.
\end{proof}

\begin{remark}
1. The procedure given in the proof of Theorem \ref{theo4.2} is not the only possible method to construct a function $f_{4} \in {\mathcal Y}_{n_{1},n_{2},n_{3}}$. For example, instead of inserting the third-layer breakpoints in the interval $(-\infty,x_{1}^{(1)})$, one could take a different interval $(x_{\nu}^{(1)}, x_{\nu+1}^{(1)})$ to insert these points.

2. The assumption $n_{3} \ge  \log_{2}(n_{1}+(n_{1}+1)n_{2})$ is a technical assumption needed in the proof that can possibly be relaxed, see the Example \ref{ex3} below.

3. The main idea of the proof of Theorem \ref{theo4.2} can be generalized to construct functions $f_{L} \in {\mathcal Y}_{n_{1},n_{2}, \ldots, n_{L-1}}$ with $n_{L-1}n_{L-2}+ \ldots + n_{2}n_{1}+n_{1}+n_{2}+ \ldots +n_{L-1}$ prescribed breakpoints. The most difficult part in that proof is then to show that all breakpoints of the preceding layers stay to be active in the construction. In \cite{Phuong20}, it has just been assumed that for all $t\in {\mathbb R}$ and each layer at least one unit is active. The corresponding \textnormal{ReLU} network is then called \textit{transparent}. In the proof of Theorem \ref{theo4.2}, this would mean that the vectors ${\mathbf S}^{(\ell)}$ and ${\mathbf S}^{(\nu,j)}$ are nonzero.
\end{remark}

\begin{example}\label{ex3}
Let $n_{1}=n_{2}=n_{3}=2$. We construct a function $f_{4} \in {\mathcal Y}_{n_{1},n_{2},n_{3}}$ with the $n_{1}n_{2}+ n_{2}n_{3}+ n_{1}+ n_{2}+ n_{3} = 14$ pre-determined breakpoints $x_{k}=k$, $k=1, \ldots, 14$. 
According to Theorem \ref{theo4.2} we set 
\begin{align*}
x_1^{(1)} &=9, \quad x_2^{(1)}=12,\\
x_{1,0}^{(2)}&=3,\quad x_{2,0}^{(2)}=6,\quad x_{1,1}^{(2)}=10,\quad x_{2,1}^{(2)}=11,\quad x_{1,2}^{(2)}=13,\quad x_{2,2}^{(2)}=14,\\
x_{1,0}^{(3)}&=1,\quad x_{2,0}^{(3)}=2,\quad x_{1,1}^{(3)}=4,\quad x_{2,1}^{(3)}=5,\quad x_{1,2}^{(3)}=7,\quad x_{2,2}^{(3)}=8,
\end{align*} 
see Figure \ref{figknots}.
To construct $f_{3,1}(t)$ and $f_{3,2}(t)$ we set according to Theorem \ref{theo4.2}
$$ {\mathbf A}^{(1)} = \begin{pmatrix} 1 \\ 1 \end{pmatrix}, \; {\mathbf b}^{(1)} = \begin{pmatrix} -9 \\ -12 \end{pmatrix}, \; {\mathbf c}^{(2)} = \begin{pmatrix} 1 \\ -1 \end{pmatrix}, \; {\mathbf b}^{(2)} = \begin{pmatrix} -3 \\ 6 \end{pmatrix}, \; {\mathbf A}^{(2)} = \begin{pmatrix} -7 & 18 \\ \frac{5}{2} & -\frac{9}{4} \end{pmatrix},
$$
and obtain for $r=1,2$,
\begin{align*}
f_{3,r}(t) = t c_{r}^{(3)} + b_{r}^{(3)} + ({a}_{r,1}^{(3)}, {a}_{r,2}^{(3)}) \begin{pmatrix}\sigma(-7\sigma(t-9)+18\sigma(t-12)+t-3) \\
\sigma(\frac{5}{2}\sigma(t-9)-\frac{9}{4}\sigma(t-12)-t+6) \end{pmatrix}.
\end{align*}
Next, we choose $\epsilon_{1}=1$ and $\epsilon_{2}=-1$, i.e.,
\begin{align*}
{\mathbf q}_{1}^{(3)} = \begin{pmatrix} 1 \\ -1 \end{pmatrix},\; {\mathbf q}_{0}^{(3)} = \begin{pmatrix} -1 \\ 2 \end{pmatrix}, \, {\mathbf A}^{(3)} = \begin{pmatrix} -3 & 6 \\ \frac{3}{2} & -\frac{3}{4} \end{pmatrix}
\end{align*}
which leads to ${\mathbf c}^{(3)} = (7, -\frac{7}{4})^T$ and ${\mathbf b}^{(3)} = (-37, \frac{13}{2})^T$.
With these settings, we find
\begin{align*}
f_{3,1}(t) &= \textstyle 7t -37 + (-3, 6) \begin{pmatrix}\sigma(-7\sigma(t-9)+18\sigma(t-12)+t-3) \\
\sigma(\frac{5}{2}\sigma(t-9)-\frac{9}{4}\sigma(t-12)-t+6) \end{pmatrix}\\
&=  \textstyle t-1 -3\sigma(t-3) +6\sigma(t-6) + 21\sigma(t-9) -18 \sigma(t-10) +9\sigma(t-11) \\
& \textstyle \quad -\frac{27}{2} \sigma(t-12) - 36 \sigma(t-13) + \frac{9}{2} \sigma(t - 14)
\end{align*}
with function values $f_{3,1}(9)=8$, $f_{3,1}(12)=56$ at first level breakpoints, and $f_{3,1}(3)=2$, $f_{3,1}(6)=-4$, $f_{3,1}(10)=33$, $f_{3,1}(11)=40$,
$f_{3,1}(13)=58.5$, $f_{3,1}(14)=25$ at second level breakpoints,
and 
\begin{align*}
f_{3,2}(t) &= \textstyle -\frac{7}{4}t +\frac{13}{2} + (\frac{3}{2}, -\frac{3}{4}) \begin{pmatrix}\sigma(-7\sigma(t-9)+18\sigma(t-12)+t-3) \\
\sigma(\frac{5}{2}\sigma(t-9)-\frac{9}{4}\sigma(t-12)-t+6) \end{pmatrix} \\
&= \textstyle -t+2  +\frac{3}{2}\sigma(t-3)  - \frac{3}{4}\sigma(t-6) - \frac{21}{2}\sigma(t-9) +9 \sigma(t-10)  - \frac{9}{8}\sigma(t-11) \\
& \textstyle \quad +\frac{27}{16} \sigma(t-12) +18 \sigma(t-13) - \frac{9}{16} \sigma(t - 14)
\end{align*}
with function values $f_{3,2}(9)=-0.25$, $f_{3,2}(12)=-15.625$ at first level breakpoints, and $f_{3,2}(3)=-1$, $f_{3,2}(6)=0.5$, $f_{3,2}(10)=-11$, $f_{3,2}(11)=-12.75$,
$f_{3,2}(13)=-16.8125$, $f_{3,2}(14)=0$ at second level breakpoints.
Therefore, the choice of $\epsilon_{1}=1$ and $\epsilon_{2}=-1$ leads to non-vanishing coefficients in the spline representation of $f_{4}$. With $a_{1}^{(4)} = -1$ and $a_{2}^{(4)} = 1$, and $c_{1}^{(4)}=b_{1}^{(4)}=0$ we obtain from Lemma \ref{lemsigma}
\begin{align*}
f_4(t) & =-\sigma(f_{3,1}(t))+\sigma(f_{3,2}(t))\\
&= (-t+2) + \sum_{j=1}^{14} \alpha_{j} \, \sigma(t-j) + \alpha_{15} \sigma(t - x_{15})
 \end{align*}
with the wanted breakpoints $x_{k}=k$, $k=1, \ldots , 14$, one additional breakpoint $x_{15}=431/29\approx 14.862$ and corresponding coefficients
$${\balpha} = (\alpha_{j})_{j=1}^{15}= (-1,1,3,-2,0.5,-0.75,-4,0.25,-21,18, -9,13.5,36,11.75,-29)^{T}. $$
The illustration of $f_{4}$ is given in Figure \ref{fig3}.
\begin{figure}[h]
\begin{center}
	\includegraphics[scale=0.6]{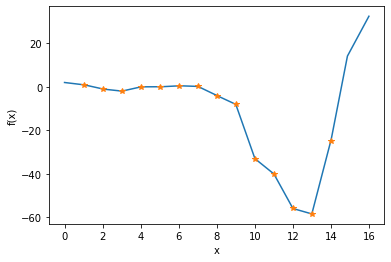}
\end{center}
\caption{Illustration of the function $f_{4}$ obtained in Example \ref{ex3}.}
\label{fig3}
\end{figure}
\end{example}

\subsection{Parameter redundancies in the ReLU model with more hidden layers}

As seen in (\ref{DNN}) and (\ref{LL}) the ReLU model ${\mathcal Y}_{n_{1}, n_{2}, \ldots , n_{L-1}}$ with $L-1$ hidden layers and $n_{0}=n_{L}=1$ possesses $2n_{1}+ (n_{1}+2)n_{2}+ \ldots +(n_{L-1}+2)n_{L}$ parameters.
In Theorem \ref{theored1}, we had shown that for $L=3$, the model can be simplified by fixing ${\mathbf A}^{(1)} = {\mathbf 1}$ and  by restricting ${\mathbf c}^{(2)}$ to a sign vector.
The redundancy, which is due to the positive scaling property $\sigma({\mathbf D} {\mathbf y}) = {\mathbf D} \sigma({\mathbf y})$, obviously occurs at each hidden layer, and we can use this fact in order to normalize ${\mathbf c}^{(\ell)} \in {\mathbb R}^{n_{\ell}}$ for $\ell=2, \ldots, L-1$ and reduce it to a sign vector. 
Summarizing, we obtain 

\begin{corollary}
The model ${\mathcal Y}_{n_{1},n_{2}, \ldots , n_{L-1}}$ in $(\ref{DNN})$ can be equivalently determined by
\begin{align*}
{\mathbf F}_{1}(t) &= t {\mathbf 1} + {\mathbf b}^{(1)} \\
{\mathbf F}_{\ell}(t) &= {\mathbf A}^{(\ell)} \sigma ({\mathbf F}_{\ell-1}(t) ) + (t \, \sign ({\mathbf c}^{(\ell)}) + {\mathbf b}^{(\ell)}), \quad \ell=2, \ldots , L-1\\
f_{L} (t) &= {\mathbf A}^{(L)} \sigma({\mathbf F}_{L-1}(t)) + (t c^{(L)} + b^{(L)}),
\end{align*}
depending on $n_{1}+ (n_{1}+1)n_{2} + (n_{2} + 1)n_{3}+ \ldots  + (n_{L-2}+ 1)n_{L-1} + n_{L-1}+ 2$ real parameters and $n_{2}+n_{3}+ \ldots +n_{L-1}$ sign parameters. 
\end{corollary}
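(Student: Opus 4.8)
The plan is to build the claimed normalization of ${\mathcal Y}_{n_{1},\ldots,n_{L-1}}$ layer by layer from the input side towards the output, at each step pushing a diagonal positive rescaling through the next ReLU via the identity $\sigma(\mathbf{D}\mathbf{x})=\mathbf{D}\sigma(\mathbf{x})$ (Remark \ref{rem11}), and then simply reading off the surviving parameters. The converse inclusion is trivial since the normalized form is a special instance of (\ref{DNN}), so it suffices to show every $f_L\in{\mathcal Y}_{n_{1},\ldots,n_{L-1}}$ admits such a representation.

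First I would treat the first layer. By Lemma \ref{lemma1} we may assume, without loss of generality, that $a_k^{(1)}\neq 0$ and that the values $x_k:=-b_k^{(1)}/a_k^{(1)}$ are pairwise distinct (vanishing entries of $\mathbf{A}^{(1)}$ only add constants that fold into $\mathbf{b}^{(2)}$, and coinciding ratios merge units, in both cases reducing $n_1$). Exactly as in the proof of Lemma \ref{lemma1} and formula (\ref{bc}), the decomposition of $\sigma(a_k^{(1)}t+b_k^{(1)})$ replaces each first-layer unit by $|a_k^{(1)}|\,\sigma(t-x_k)$ plus, for $a_k^{(1)}<0$, an affine-in-$t$ remainder; feeding these through $\mathbf{A}^{(2)}$, the factors $|a_k^{(1)}|$ are absorbed into $\mathbf{A}^{(2)}$ (a positive scaling) and the remainders into the source channel $\mathbf{c}^{(2)}$ and bias $\mathbf{b}^{(2)}$. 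This gives $\mathbf{F}_1(t)=t\mathbf{1}+\mathbf{b}^{(1)}$ with $\mathbf{b}^{(1)}:=-\mathbf{x}$, and in particular the $n_1$ real parameters of $\mathbf{A}^{(1)}$ disappear outright.

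Next I would normalize the hidden layers $\ell=2,\ldots,L-1$ in increasing order of $\ell$. At step $\ell$ set $\mathbf{D}^{(\ell)}=\diag(d_j^{(\ell)})_{j=1}^{n_\ell}$ with $d_j^{(\ell)}=|c_j^{(\ell)}|$ if $c_j^{(\ell)}\neq 0$ and $d_j^{(\ell)}=1$ otherwise, so that $\mathbf{c}^{(\ell)}=\mathbf{D}^{(\ell)}\sign(\mathbf{c}^{(\ell)})$ and $\mathbf{D}^{(\ell)}$ is invertible with positive entries. Then
\[
\mathbf{F}_\ell(t)=\mathbf{D}^{(\ell)}\Bigl((\mathbf{D}^{(\ell)})^{-1}\mathbf{A}^{(\ell)}\sigma(\mathbf{F}_{\ell-1}(t))+t\,\sign(\mathbf{c}^{(\ell)})+(\mathbf{D}^{(\ell)})^{-1}\mathbf{b}^{(\ell)}\Bigr),
\]
and because $\mathbf{A}^{(\ell+1)}\sigma(\mathbf{D}^{(\ell)}\mathbf{y})=(\mathbf{A}^{(\ell+1)}\mathbf{D}^{(\ell)})\sigma(\mathbf{y})$ (with $\ell+1=L$ permitted), the leading $\mathbf{D}^{(\ell)}$ is absorbed into $\mathbf{A}^{(\ell+1)}$. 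Renaming $(\mathbf{D}^{(\ell)})^{-1}\mathbf{A}^{(\ell)}$, $(\mathbf{D}^{(\ell)})^{-1}\mathbf{b}^{(\ell)}$, $\mathbf{A}^{(\ell+1)}\mathbf{D}^{(\ell)}$ back to $\mathbf{A}^{(\ell)},\mathbf{b}^{(\ell)},\mathbf{A}^{(\ell+1)}$ puts layer $\ell$ in the stated form with $\mathbf{c}^{(\ell)}$ replaced by its sign; since layers are processed left to right, the $\mathbf{A}^{(\ell+1)}$ modified here is still unnormalized and will be handled next (or, for $\ell+1=L$, merely carried along), and the scalars $c^{(L)},b^{(L)}$ are untouched.

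Finally I would count the parameters of the resulting model: $\mathbf{b}^{(1)}$ gives $n_1$ reals; for $\ell=2,\ldots,L-1$, $\mathbf{A}^{(\ell)}$ gives $n_{\ell-1}n_\ell$ reals, $\mathbf{b}^{(\ell)}$ gives $n_\ell$ reals, and $\sign(\mathbf{c}^{(\ell)})$ gives $n_\ell$ sign parameters; and $\mathbf{A}^{(L)},c^{(L)},b^{(L)}$ give $n_{L-1}+2$ reals. Summing yields $n_1+\sum_{\ell=2}^{L-1}(n_{\ell-1}+1)n_\ell+n_{L-1}+2=n_1+(n_1+1)n_2+\cdots+(n_{L-2}+1)n_{L-1}+n_{L-1}+2$ real parameters and $\sum_{\ell=2}^{L-1}n_\ell=n_2+\cdots+n_{L-1}$ sign parameters, as claimed. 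The one point needing care — the main subtlety of the argument — is that reducing the first layer all the way to $\mathbf{1}$, rather than merely to a sign vector, genuinely exploits the source channel at layer $2$; this is precisely why the first layer loses $n_1$ real parameters outright while every other hidden layer only converts $n_\ell$ reals into $n_\ell$ signs.
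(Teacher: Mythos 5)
Your proposal is correct and follows essentially the same route as the paper: the paper proves this corollary by induction exactly as in Corollary \ref{corscaling} and Theorem \ref{theored1}, i.e.\ by using Lemma \ref{lemma1} together with the source channel to reduce the first layer to $t\mathbf{1}+\mathbf{b}^{(1)}$ and then pushing a positive diagonal scaling $\mathbf{D}^{(\ell)}=\diag(|c_j^{(\ell)}|)$ through each ReLU so that $\mathbf{c}^{(\ell)}$ becomes a sign vector while $\mathbf{D}^{(\ell)}$ is absorbed into $\mathbf{A}^{(\ell+1)}$, which is precisely your layer-by-layer sweep (the unrolled form of the paper's induction). Your parameter count and your remark that the first layer loses $n_1$ real parameters only because of the source channel at layer $2$ also match the paper.
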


\begin{proof}
The proof follows analogously as for Theorem \ref{theored1} using an induction argument.
\end{proof} 

In particular, ${\mathcal Y}_{n_{1},n_{2},n_{3}}$ depends on at most $n_{1}+ (n_{1}+1)n_{2} + (n_{2} + 1)n_{3} + n_{3}+2$ real parameters and $n_{2}+ n_{3}$ sign parameters. 
In Theorem \ref{theo4.2} we have been able to show that we can always construct $f_{4} \in {\mathcal Y}_{n_{1},n_{2},n_{3}}$ with $n_{1}n_{2}+ n_{2}n_{3}+ n_{1}+ n_{2}+ n_{3}$ prescribed breakpoints. Moreover, in this procedure, the sign vectors ${\mathbf c}^{(2)}$ and ${\mathbf c}^{(3)}$ as well as ${\mathbf A}^{(4)} \in {\mathbb R}^{1 \times n_{3}}$ can be chosen independently of the breakpoints, while some of these choices may lead to functions, where not all prescribed breakpoints are active. We therefore obtain similarly as in Theorem \ref{theored1}

\begin{theorem}\label{theored2}
Any function $f_{4} \in {\mathcal Y}_{n_{1},n_{2},n_{3}}$  can be represented as a function of the form 
\begin{align}\nonumber f_{4}(t) &=  {c}^{(4)} t + {b}^{(4)} \!+ {\mathbf A}^{(4)}\sigma( {\mathbf A}^{(3)} \sigma({\mathbf A}^{(2)} \sigma(t {\mathbf 1} + {\mathbf b}^{(1)}) \!+ t \sign{\mathbf c}^{(2)}  + {\mathbf b}^{(2)}) \!+  t \sign{\mathbf c}^{(3)} \! + {\mathbf b}^{(3)})\\
\label{twol3}
\end{align}
with a sign vectors ${\mathbf c}^{(2)} \in \{-1,0,1\}^{n_{2}}$, ${\mathbf c}^{(3)} \in \{-1,0,1\}^{n_{3}}$ and with $n_{1}+ (n_{1}+1)n_{2} + (n_{2} + 1)n_{3} + n_{3}+2$ real parameters in 
${c}^{(4)}$, ${b}^{(4)}$, ${\mathbf A}^{(\ell)} \in {\mathbb R}^{n_{\ell} \times n_{\ell-1}}$,  ${\mathbf b}^{(\ell)} \in {\mathbb R}^{n_\ell}$, $\ell=1, \ldots , 4$.
Moreover, the parameters  are independent, i.e., any restriction of ${\mathcal Y}_{n_{1},n_{2},n_{3}}$ to a model $\tilde{\mathcal Y}_{n_{1},n_{2},n_{3}}$, where one or more of these parameters are fixed to be zero, leads to 
$$ \tilde{\mathcal Y}_{n_{1},n_{2},n_{3}} \subsetneq {\mathcal Y}_{n_{1},n_{2},n_{3}}. $$
\end{theorem}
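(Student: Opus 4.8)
The plan is to follow the same two-step strategy as in the proof of Theorem~\ref{theored1}, now lifted by one layer. The representation~(\ref{twol3}) is not new: it is exactly the content of the Corollary immediately preceding this theorem, obtained by applying the positive scaling identity $\sigma(\mathbf{D}\mathbf{y})=\mathbf{D}\sigma(\mathbf{y})$ at the first and second hidden layers to normalize $\mathbf{A}^{(1)}$ to $\mathbf{1}$ and each $\mathbf{c}^{(\ell)}$, $\ell=2,3$, to its sign vector. Counting the remaining free entries of $\mathbf{b}^{(1)}$, $(\mathbf{A}^{(2)},\mathbf{b}^{(2)})$, $(\mathbf{A}^{(3)},\mathbf{b}^{(3)})$, $\mathbf{A}^{(4)}$ and $c^{(4)},b^{(4)}$ yields $n_1+(n_1+1)n_2+(n_2+1)n_3+n_3+2$ real parameters together with the two sign vectors $\mathbf{c}^{(2)}\in\{-1,0,1\}^{n_2}$, $\mathbf{c}^{(3)}\in\{-1,0,1\}^{n_3}$; so the remaining task is to prove that none of these real parameters can be fixed a priori to zero without strictly shrinking the model.

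For this I would invoke Theorem~\ref{theo4.2}: for every ordered tuple of $N:=n_1n_2+n_2n_3+n_1+n_2+n_3$ pairwise distinct breakpoints it produces an $f_4\in\mathcal{Y}_{n_1,n_2,n_3}$ having precisely these $N$ points among its active breakpoints. Inspecting that construction, the parameters $\mathbf{b}^{(1)}$, $\mathbf{A}^{(2)}$, $\mathbf{b}^{(2)}$, $\mathbf{A}^{(3)}$, $\mathbf{b}^{(3)}$ and the sign data $\mathbf{c}^{(2)},\mathbf{c}^{(3)}$ (the latter encoded by $\epsilon\in\{-1,1\}^{n_3}$) are fully determined by the prescribed breakpoints, whereas $\mathbf{A}^{(4)}=(a^{(4)}_r)_{r=1}^{n_3}$ and the scalars $c^{(4)},b^{(4)}$ remain free, subject only to the open, generically satisfied conditions $a^{(4)}_r\neq 0$ and $\mathbf{A}^{(4)}$ not orthogonal to the vectors $\mathbf{S}^{(\ell)}$, $\mathbf{S}^{(\nu,j)}$ occurring there. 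Hence the constructed functions form a family governed by $N+(n_3+2)$ real parameters, which is exactly the number of real parameters of the normalized model $\mathcal{Y}_{n_1,n_2,n_3}$.

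To turn this into the independence statement I would argue, as in Theorem~\ref{theored1}, that this $(N+n_3+2)$-parameter family is genuinely $(N+n_3+2)$-dimensional as a set of functions: the $N$ prescribed breakpoints are recovered from $f_4$ as a distinguished part of its set of points of non-differentiability (all of them active by Theorem~\ref{theo4.2}), while $\mathbf{A}^{(4)}$, $c^{(4)}$, $b^{(4)}$ are recovered from the CPL spline coefficients produced by Corollary~\ref{corLL} and Algorithm~\ref{algo2}; for instance the coefficient of $\sigma(t-x^{(3)}_{r,0})$ equals $a^{(4)}_r$ since $|\mu^{(3)}_{r,0}|=|\epsilon_r|=1$, and then $q_1^{(4)}=\lim_{t\to-\infty}f_4'(t)$ together with $q_0^{(4)}$ determines $c^{(4)},b^{(4)}$ once the $a^{(4)}_r$ and $\epsilon$ are known. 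So the parameter-to-function map is injective on the construction domain. Since a restriction $\tilde{\mathcal{Y}}_{n_1,n_2,n_3}$ obtained by fixing one of the $N+n_3+2$ real parameters to zero is a finite union (over the sign choices) of images of parameter sets with one fewer real dimension, it cannot contain the full family; concretely, choosing breakpoints and $\mathbf{A}^{(4)},c^{(4)},b^{(4)}$ so that the coordinate in question is nonzero in every normalized representation of the resulting $f_4$ exhibits a function in $\mathcal{Y}_{n_1,n_2,n_3}\setminus\tilde{\mathcal{Y}}_{n_1,n_2,n_3}$, whence $\tilde{\mathcal{Y}}_{n_1,n_2,n_3}\subsetneq\mathcal{Y}_{n_1,n_2,n_3}$.

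The step I expect to be the main obstacle is precisely this injectivity/recovery argument: one must verify that each of the $N+n_3+2$ coordinates is genuinely free and readable off from the output function, so that fixing any single real parameter really does destroy some achievable $f_4$. Breakpoint recovery itself is clean, but the delicate points are (i) distinguishing the prescribed breakpoints from the possibly additional ones the construction creates (as in Example~\ref{ex3}), and (ii) ruling out that some alternative normalized representation of the same $f_4$ avoids the fixed parameter — which is where one should use the explicit formulas of Corollary~\ref{corLL} and the alternating-sign structure of Corollary~\ref{lem3} rather than a soft dimension count alone.
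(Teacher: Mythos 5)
Your proposal is correct and takes essentially the same route as the paper: the representation comes from the preceding normalization corollary (positive scaling applied at each hidden layer), and independence is deduced from Theorem \ref{theo4.2}, which lets one prescribe all $n_1n_2+n_2n_3+n_1+n_2+n_3$ breakpoints arbitrarily while the remaining $n_3+2$ output-layer parameters ($\mathbf{A}^{(4)}, c^{(4)}, b^{(4)}$) and the sign vectors stay free --- exactly the argument the paper invokes ``similarly as in Theorem \ref{theored1}''. The recovery/uniqueness issue you flag as the main obstacle is likewise left implicit in the paper, which rests on the same rigidity formulas (Corollaries \ref{lem3}, \ref{cor3} and \ref{corLL}) that you propose to use.
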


\section*{Acknowledgement}
The authors gratefully acknowledge support by the German Research Foundation in the framework of the RTG 2088. The first author acknowledges support by the EU MSCA-RISE-2020 project EXPOWER.  
\small

\end{document}